\theoremstyle{plain}
\newtheorem{theorem}{Theorem}
\newtheorem{lemma}[theorem]{Lemma}
\newtheorem{corollary}[theorem]{Corollary}
\newtheorem{proposition}[theorem]{Proposition}
\theoremstyle{definition}
\newtheorem{definition}[theorem]{Definition}
\newtheorem{example}[theorem]{Example}
\newtheorem{open}[theorem]{Open Problem}
\newtheorem{remark}[theorem]{Remark} 
\newcommand{\N}{\mathbb{N}}
\newcommand{\lex}{\rm lex}
\newcommand{\Nyl}{\mathcal{N}}
\newcommand{\Lyn}{\mathcal{L}}
\newcommand{\NylF}{{\rm NylF}}
\newcommand{\NylC}{{\rm NylC}}
\newcommand{\length}{{\rm length}}
\DeclareMathOperator{\andrm}{\ {\rm and}\ }
\author{
\'Emilie Charlier\\
Department of Mathematics\\
University of Li\`ege\\
All\'ee de la D\'ecouverte 12\\
4000 Li\`ege, Belgium\\
\url{echarlier@uliege.be}\\
\and
Manon Philibert\\
Laboratoire d'Informatique et Systèmes \\
Aix-Marseille University\\
52 Av. Escadrille Normandie Niemen\\
13397 Marseille, France\\
\url{manon.philibert@lis-lab.fr}
\and
Manon Stipulanti\footnote{Corresponding author.}\\
Department of Mathematics\\
University of Li\`ege\\
All\'ee de la D\'ecouverte 12\\
4000 Li\`ege, Belgium\\
\url{m.stipulanti@uliege.be}}
\title{Nyldon words}  
\date{\today} 
\begin{document}
\thispagestyle{empty}
\maketitle

\begin{abstract} 
The Chen-Fox-Lyndon theorem states that every finite word over a fixed alphabet can be uniquely factorized as a lexicographically nonincreasing sequence of Lyndon words. This theorem can be used to define the family of Lyndon words in a recursive way. If the lexicographic order is reversed in this definition, we obtain a new family of words, which are called the Nyldon words. In this paper, we show that every finite word can be uniquely factorized into a lexicographically nondecreasing sequence of Nyldon words. Otherwise stated, Nyldon words form a complete factorization of the free  monoid with respect to the decreasing lexicographic order. Then we investigate this new family of words. In particular, we show that Nyldon words form a right Lazard set.
\end{abstract}

\bigskip
\hrule
\bigskip

\noindent 2010 {\it Mathematics Subject Classification}: 68R15, 94A45.

\noindent \emph{Keywords:}
Lyndon words, Nyldon words, complete factorization of the free monoid, Lazard factorization, Hall set, comma-free code

\bigskip
\hrule
\bigskip

\section{Introduction}

The Chen-Fox-Lyndon theorem states that every finite word $w$ can be uniquely factorized as $w=\ell_1 \ell_2\cdots \ell_k$ where $\ell_1,\ldots,\ell_k$ are Lyndon words such that $\ell_1\ge_{\lex}\cdots\ge_{\lex}\ell_k$. This theorem can be used to define the family of Lyndon words over some totally ordered alphabet in a recursive way: the letters are Lyndon; a finite word of length greater than one is Lyndon if and only if it cannot be factorized into a nonincreasing sequence of shorter Lyndon words. In a Mathoverflow post dating from November 2014, Grinberg defines a variant of Lyndon words, which he calls Nyldon words, by reversing the lexicographic order in the previous recursive definition: the letters are Nyldon; a finite word of length greater than one is Nyldon if and only if it cannot be factorized into a {\em nondecreasing} sequence of shorter Nyldon words~\cite{Grinberg2014}. The class of words so obtained is not, as one might first think, the class of maximal words in their conjugacy classes. Grinberg asks three questions:
\begin{enumerate}
	\item How many Nyldon words of each length  are there? 
	\item Is there an equivalent to the Chen-Fox-Lyndon theorem for Nyldon words? More precisely, is it true 				that for every finite word $w$, there exists a unique sequence $(n_1,\ldots,n_k)$ of Nyldon words 					satisfying $w=n_1\cdots n_k$ and $n_1 \le_{\lex} \cdots \le_{\lex} n_k$?
	\item Is it true that every primitive word admits exactly one Nyldon word in its conjugacy class, whereas any 				non primitive word has no such conjugate?
\end{enumerate}
In this paper, we discuss the properties of this new class of words in the more general context of the complete factorizations of the free monoid as introduced by Schützenberger in \cite{Schutzenberger1965} and later on extensively studied in \cite{Berstel-Perrin-Reutenauer2010,Lothaire1997,Melancon1992,Reutenauer1993,Viennot1978}. In particular, we show that each of Grinberg's questions has a positive answer. 

Another variant of Lyndon words has been recently studied in \cite{Felice2018}: the {\em inverse Lyndon words}. Similarly to the Nyldon words, which are studied in the present paper, the inverse Lyndon words are built from the decreasing lexicographic order: they are the words that are greater than any of their proper suffixes. Although it is true that Nyldon words are greater than all their Nyldon proper suffixes (this is Theorem~\ref{thm:suffix}), the families of Nyldon words and inverse Lyndon words do not coincide. For example, the family of inverse Lyndon words is prefix-closed whereas the family of Nyldon words is not. Another major difference between those two families of words is that the inverse Lyndon factorization of a word is not unique in general, while the Nyldon factorization is always unique (this is Theorem~\ref{thm:unicity}).

This paper has the following organization. Section~\ref{sec:def} contains the necessary background and definitions. In Section~\ref{sec:prefixes}, we briefly discuss the prefixes of Nyldon words. Then, in Section~\ref{sec:counting}, we show that the unicity of the Nyldon factorization implies that there are equally many Nyldon and Lyndon words of each length. In Section~\ref{sec:suffixes}, we focus on suffixes of Nyldon words. In particular, we show that Nyldon words are greater than all their Nyldon proper suffixes. This allows us to prove in Section~\ref{sec:unicity} that the Nyldon factorization of a word is unique. Then, in Section~\ref{sec:computing-Nyldon-fac}, we provide algorithms for computing the Nyldon factorization of a word and for generating the Nyldon words up to any given length. Next, in Section~\ref{sec:standard-fac}, we show how to obtain a standard factorization of the Nyldon words, similarly to the standard factorization of the Lyndon words. In Section~\ref{sec:complete-fac}, we recall Schützenberger's theorem on factorizations of the free monoid and we show that this result combined with the unicity of the Nyldon factorization implies the primitivity of Nyldon words, as well as the fact that each primitive conjugacy class contains exactly one Nyldon word. Then, in Section~\ref{sec:comparison}, we realize an in-depth comparison between the families of Nyldon and Lyndon words. In particular, we emphasize how the exclusive knowledge of the recursive definition of the Lyndon words allows us to recover some known properties of Lyndon words, in a similar way to what we  do for Nyldon words. However, we note that the proofs in the Lyndon case are not simple translations of the proofs from the Nyldon case. We also show that the standard factorization of Nyldon words introduced in Section~\ref{sec:standard-fac} can be understood as the analogue of the right standard factorization of the Lyndon words. In Section~\ref{sec:Lazard}, we show that the Nyldon words form a right Lazard factorization, but not a left Lazard factorization. Finally, in Section~\ref{sec:Melancon}, we apply Melançon's algorithm in order to compute the Nyldon conjugate of any primitive word. Along the way, we mention seven open problems.

\section{Preliminaries}
\label{sec:def}

Throughout the text, $A$ designates an arbitrary finite alphabet of cardinality at least $2$ and endowed with a total order $<$.  Furthermore, when we need to specify the letters in $A$,  we use the notation $A=\{{\tt 0,1,\ldots,m}\}$, with ${\tt 0<1<\cdots<m}$. We use the usual definitions (prefix, suffix, factor, etc.)\ and notation of combinatorics on words; for example, see \cite{Lothaire1997}. In particular, $A^*$ is the set of all finite words over the alphabet $A$ and $A^+$ is the set of all nonempty finite words over the alphabet $A$. The empty word is denoted by $\varepsilon$ and the length of a finite word $w$ is denoted by $|w|$. A prefix (resp., suffix, factor) $u$ of a word $w$ is {\em proper} if $u\ne w$. 
Moreover, we say that a sequence $(w_1,\ldots,w_k)$ of nonempty words over $A$ is a {\em factorization} of a word $w$ if $w=w_1\cdots w_k$. We also say that $k$ is the {\em length} of this factorization. Note that, with the previous notation, we have $w=\varepsilon$ if and only if $k=0$. In particular, if $w$ is a nonempty word then $k\ge1$. Recall that the \emph{lexicographic order on $A^*$}, which we denote by $<_{\lex}$, is a total order on $A^*$ induced by the order $<$ on the letters and defined as follows: $u <_{\lex } v $ either if $u$ is a proper prefix of $v$ or if there exist ${\tt i,j} \in A$ and $p\in A^*$ such that ${\tt i<j}$, $p{\tt i}$ is a prefix of $u$ and $p{\tt j}$ is a prefix of $v$. As usual, we write $u \le_{\lex} v$ if $u<_{\lex} v$ or $u=v$. Also recall that two words $x$ and $y$ are {\em conjugate} if they are circular permutations of each other: $x=uv$ and $y=vu$ for some words $u,v$. A word $x$ is a {\em power} if $x=u^k$ for some word $u$ and some integer $k\ge2$. In particular, the empty word $\varepsilon$ is considered to be a power. Finally, a word which is not a power is said to be \emph{primitive}.

\begin{definition}
\label{def:Lyndon}
A finite word $w$ over $A$ is \emph{Lyndon} if it is primitive and lexicographically minimal among its conjugates. We let $\Lyn$ denote the set of all Lyndon words over $A$.
\end{definition}

It is easily verified that a word $w$ is Lyndon if and only if $w\neq \varepsilon$ and, for all $u,v\in A^+$ such that $w=uv$, we have $w<_{\lex} vu$.

The following theorem is usually referred to as the Chen-Fox-Lyndon factorization theorem~\cite{Chen-Fox-Lyndon1958}, although these authors never formulated the result in this way. It was simultaneously obtained by {\v S}ir{\v s}ov \cite{Sirsov1958}. Also see \cite{Bokut-Chen-Li2013,Berstel-Perrin2007} for a discussion on the origins of this theorem.

\begin{theorem}[\cite{Chen-Fox-Lyndon1958,Sirsov1958}] 
\label{the:Lyndon-fac}
For every finite word $w$ over $A$, there exists a unique factorization $(\ell_1,\ldots,\ell_k)$ of $w$ into Lyndon words over $A$ such that $\ell_1\ge_{\lex}\cdots\ge_{\lex}\ell_k$.
\end{theorem}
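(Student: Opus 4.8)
The plan is to prove existence and uniqueness separately, both by induction on $|w|$, with the \emph{longest Lyndon prefix} of a word as the pivotal object. For existence: if $w=\varepsilon$ take $k=0$; otherwise, since every letter is a Lyndon word, $w$ has a (well-defined) longest Lyndon prefix $\ell_1$, and I write $w=\ell_1w'$ with $|w'|<|w|$. By the induction hypothesis $w'=\ell_2\cdots\ell_k$ with $\ell_2\ge_{\lex}\cdots\ge_{\lex}\ell_k$ (the empty product if $w'=\varepsilon$), so it suffices to check $\ell_1\ge_{\lex}\ell_2$. If instead $\ell_1<_{\lex}\ell_2$, then the classical lemma ``$u,v\in\Lyn$ and $u<_{\lex}v$ imply $uv\in\Lyn$'' (see, e.g., \cite{Lothaire1997}) shows $\ell_1\ell_2\in\Lyn$; but $\ell_1\ell_2$ is then a Lyndon prefix of $w$ strictly longer than $\ell_1$, a contradiction.

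For uniqueness, the crux is the claim that in any factorization $w=\ell_1\cdots\ell_k$ into Lyndon words with $\ell_1\ge_{\lex}\cdots\ge_{\lex}\ell_k$, the first factor $\ell_1$ is exactly the longest Lyndon prefix $p$ of $w$. Granting this, two such factorizations of a nonempty $w$ begin with the same factor; cancelling it and inducting on $|w|$ forces the two factorizations to coincide. To prove the claim: $\ell_1$ and $p$ are both prefixes of $w$, hence comparable for the prefix order, and $|\ell_1|\le|p|$ by maximality of $p$. Assume for contradiction $|\ell_1|<|p|$, and let $i\ge2$ be least with $|\ell_1\cdots\ell_i|\ge|p|$, so that $p=\ell_1\cdots\ell_{i-1}q$ for some nonempty prefix $q$ of $\ell_i$. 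Then $q$ is a proper suffix of $p$, whence $p<_{\lex}q$ since $p\in\Lyn$; but also $q\le_{\lex}\ell_i\le_{\lex}\ell_1\le_{\lex}p$ (respectively because $q$ is a prefix of $\ell_i$, the sequence $(\ell_j)$ is nonincreasing, and $\ell_1$ is a prefix of $p$), so $p<_{\lex}q\le_{\lex}p$, a contradiction. Hence $\ell_1=p$.

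The one genuinely delicate ingredient is the auxiliary lemma invoked in the existence part: that the concatenation of two Lyndon words listed in strictly increasing lexicographic order is again Lyndon. Its proof proceeds by a case analysis on a proper suffix $s$ of $uv$ — is $s$ a proper suffix of $v$, is $s=v$, or does $s$ straddle the boundary between $u$ and $v$? — and the fussy case is when $u$ is a prefix of $v$, where one must argue carefully using primitivity and the strictness of the lexicographic comparisons. All the remaining arguments are elementary manipulations of prefixes, suffixes, and the lexicographic order; the only point to keep in mind is that the inequality $p<_{\lex}q$ above is strict, which is automatic because $q$, being a \emph{proper} suffix of the Lyndon word $p$, is strictly larger than $p$.
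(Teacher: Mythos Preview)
Your argument is correct and is essentially the classical proof (as in, e.g., \cite{Lothaire1997}): existence via the longest Lyndon prefix together with the concatenation lemma ``$u,v\in\Lyn$ and $u<_{\lex}v$ imply $uv\in\Lyn$'', and uniqueness by showing that the first factor of any nonincreasing Lyndon factorization must equal the longest Lyndon prefix.

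Note, however, that the paper does not give its own proof of this theorem: it is stated with citations to \cite{Chen-Fox-Lyndon1958,Sirsov1958} and no argument follows. The only place the paper gestures at a proof is the remark preceding Theorem~\ref{thm:Lyndon-suffix}, where it observes that its proof of that result (a Lyndon word is smaller than every Lyndon proper suffix, established from the recursive definition alone) ``also provides us with another proof of Theorem~\ref{the:Lyndon-fac}, in the same vein as we proved the unicity of the Nyldon factorization.'' That implied route is dual to yours: it pins down the \emph{last} factor of any Lyndon factorization, mirroring Theorems~\ref{thm:longest-Nyldon-suffix} and~\ref{thm:unicity}, rather than the \emph{first} factor via the longest Lyndon prefix. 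Your prefix-based approach buys a short, self-contained argument once the concatenation lemma is granted; the paper's suffix-based approach avoids that lemma and stays parallel to the Nyldon development, at the cost of the longer induction carried out in Theorem~\ref{thm:Lyndon-suffix}.
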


This result allows us to (re)define the set of Lyndon words recursively.

\begin{corollary}
\label{cor:LisL}
Let $L$ be the set of words over $A$ recursively defined as follows: letters are in $L$; a word of length at least two belongs to $L$ if and only if it cannot be factorized into a lexicographically nonincreasing sequence of shorter words of $L$. Then $L=\Lyn$.
\end{corollary}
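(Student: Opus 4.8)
The plan is to prove $L = \Lyn$ by induction on word length, using the Chen-Fox-Lyndon factorization theorem (Theorem~\ref{the:Lyndon-fac}) as the key external ingredient. The base case is immediate: letters lie in both $L$ and $\Lyn$ by definition. For the inductive step, fix $n \ge 2$ and assume that, for every word of length $< n$, membership in $L$ coincides with membership in $\Lyn$. Let $w$ be a word of length $n$; I want to show $w \in L \iff w \in \Lyn$.

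For the direction $w \notin \Lyn \Rightarrow w \notin L$: if $w$ is not Lyndon, then by Theorem~\ref{the:Lyndon-fac} its Chen-Fox-Lyndon factorization $(\ell_1,\ldots,\ell_k)$ has $k \ge 2$ (a length-one factorization into a single Lyndon word would force $w$ itself to be Lyndon). Each $\ell_i$ has length strictly less than $n$, so by the induction hypothesis each $\ell_i \in L$, and $\ell_1 \ge_{\lex} \cdots \ge_{\lex} \ell_k$ exhibits $w$ as a lexicographically nonincreasing product of shorter words of $L$; hence $w \notin L$. For the direction $w \notin L \Rightarrow w \notin \Lyn$: if $w \notin L$, then $w = u_1 \cdots u_k$ with $k \ge 2$, each $u_i \in L$ of length $< n$, and $u_1 \ge_{\lex} \cdots \ge_{\lex} u_k$. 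By the induction hypothesis each $u_i \in \Lyn$, so $(u_1,\ldots,u_k)$ is a nonincreasing factorization of $w$ into Lyndon words; since $k \ge 2$, uniqueness in Theorem~\ref{the:Lyndon-fac} shows this is \emph{the} Chen-Fox-Lyndon factorization of $w$, and a Lyndon word of length $\ge 2$ has a trivial (length-one) such factorization, so $w$ cannot be Lyndon.

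The main point requiring care — and the only place the argument is not purely formal — is the observation that a word $w$ of length $\ge 2$ is Lyndon if and only if its Chen-Fox-Lyndon factorization has length $1$; equivalently, that no Lyndon word of length $\ge 2$ admits a nontrivial nonincreasing factorization into Lyndon words. This follows from the uniqueness clause of Theorem~\ref{the:Lyndon-fac} together with the fact that $(w)$ is itself a valid nonincreasing Lyndon factorization when $w$ is Lyndon, so by uniqueness it is the only one. Once this is in hand, both implications above are immediate, and the induction closes. I expect no genuine obstacle here; the statement is essentially a reformulation of Theorem~\ref{the:Lyndon-fac}, and the proof is a routine induction.
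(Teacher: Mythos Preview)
Your proposal is correct and follows exactly the approach the paper indicates: an induction on word length showing $L\cap A^n=\Lyn\cap A^n$ for all $n\ge 1$, with Theorem~\ref{the:Lyndon-fac} (existence and uniqueness of the Chen--Fox--Lyndon factorization) as the sole external ingredient. The paper compresses this into a single sentence, but your expanded argument is precisely what that sentence unpacks to.
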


\begin{proof}
An easy induction on $n$ shows that $L\cap A^n=\Lyn\cap A^n$ for all $n\ge 1$.
\end{proof}

Now we consider the following class of words, which were baptized the {\em Nyldon words} by Grinberg~\cite{Grinberg2014}.

\begin{definition}
\label{def:Nyldon-recursive}
Let $\Nyl$ be the set of words over $A$ recursively defined as follows: letters are in $\Nyl$; a word of length at least two belongs to $\Nyl$ if and only if it cannot be factorized into a (lexicographically) nondecreasing sequence of shorter words of $\Nyl$. Otherwise stated, a word $w$ is in $\Nyl$ either if it is a letter, or if there does not exist any factorization $(n_1,\ldots,n_k)$ of $w$ into words in $\Nyl$ such that $\max\{|n_1|,\ldots,|n_k|\}<|w|$ and $n_1\le_{\lex}\cdots\le_{\lex}n_k$. The words of $\Nyl$ are called the  {\em Nyldon words}.  Moreover, any factorization $(n_1,\ldots,n_k)$ of $w$ into Nyldon words such that $n_1\le_{\lex}\cdots\le_{\lex}n_k$ is called a {\em Nyldon factorization} of $w$. 
\end{definition}

With this new vocabulary, we can say that a word $w$ is Nyldon if and only if its only Nyldon factorization is $(w)$. Otherwise stated, $k=1$ and $n_1=w$. On the opposite, a nonempty word is not Nyldon if and only if it admits a Nyldon factorization of length at least $2$. The fact that every finite word admits at least one Nyldon factorization is immediate from the definition of Nyldon words, but still worth being emphasized.

\begin{proposition}
\label{prop:existence}
For every finite word $w$ over $A$, there exists a factorization $(n_1,\ldots,n_k)$ of $w$ into Nyldon words over $A$ such that $n_1\le_{\lex}\cdots\le_{\lex}n_k$.
\end{proposition}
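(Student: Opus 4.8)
The plan is to prove this by strong induction on the length $|w|$. The base cases $|w| \le 1$ are immediate: the empty word has the empty factorization ($k=0$), and a single letter is Nyldon by definition, so $(w)$ works.

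For the inductive step, suppose $|w| \ge 2$ and that every word strictly shorter than $w$ admits a nondecreasing Nyldon factorization. If $w$ itself is Nyldon, then $(w)$ is the desired factorization and we are done. Otherwise, by the recursive Definition~\ref{def:Nyldon-recursive}, the fact that $w \notin \Nyl$ means precisely that there \emph{does} exist a factorization $(m_1,\ldots,m_j)$ of $w$ into words of $\Nyl$ with $\max\{|m_1|,\ldots,|m_j|\} < |w|$ and $m_1 \le_{\lex} \cdots \le_{\lex} m_j$ --- and such a factorization is already a Nyldon factorization. So in fact there is almost nothing to do: unwinding the definition of ``not Nyldon'' hands us the factorization directly.

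I expect the only subtlety worth spelling out is the logical structure: one must be careful that Definition~\ref{def:Nyldon-recursive} is genuinely a well-founded recursive definition (membership in $\Nyl \cap A^n$ depends only on $\Nyl \cap A^{<n}$), so that the statement ``$w \notin \Nyl$'' is meaningful and equivalent to the existence of a shorter-pieces nondecreasing factorization into Nyldon words. Given that this is already implicit in how the definition is phrased, the proof reduces to the two-case dichotomy above. The main obstacle, such as it is, is purely expository: making clear that the induction is only needed to guarantee that Nyldon factorizations of shorter words exist at all (which is what legitimizes the recursive definition), whereas the existence of \emph{a} factorization of $w$ itself follows in one line from the definition once $w$ is known to be non-Nyldon.

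In summary, the proof writes as: induct on $|w|$; handle $|w| \le 1$ directly; for $|w| \ge 2$, split on whether $w \in \Nyl$; if yes take $(w)$, if no invoke the defining property of non-Nyldon words to extract the factorization. No nontrivial combinatorics on words is required here — this proposition is essentially a restatement of the definition, and its real purpose is to set up the later, genuinely hard uniqueness result (Theorem~\ref{thm:unicity}).
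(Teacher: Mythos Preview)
Your proposal is correct and matches the paper's approach: the paper simply states that existence ``is immediate from the definition of Nyldon words'' and gives no further argument, which is exactly the two-case dichotomy you spell out. Your induction-and-unwind-the-definition write-up is a faithful expansion of that one-line remark.
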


We illustrate the definition of Nyldon words in the case where $A=\{{\tt 0},{\tt 1}\}$. In what follows, we will refer to the Nyldon words over this alphabet as the {\em binary Nyldon words}.

\begin{example}
By Definition~\ref{def:Nyldon-recursive}, the letters ${\tt 0}$ and ${\tt 1}$ are Nyldon. The word ${\tt 00}$ (resp., ${\tt 01}$, ${\tt 11}$) of length $2$ admits the Nyldon factorization $({\tt 0,0})$ (resp., $({\tt 0 ,1})$, $({\tt 1, 1})$), and hence is not a Nyldon word. Consequently, the only binary Nyldon word of length $2$ is ${\tt 10}$. The binary Lyndon and Nyldon words up to length $7$ are stored in Table~\ref{table:first-Lyndon-Nyldon}. 
\begin{table}[h]
\[
	\begin{tabular}{l | l||l|l||l|l}
	\text{Lyndon} 	& \text{Nyldon} 	& \text{Lyndon} 	& \text{Nyldon} 	& \text{Lyndon} 	& \text{Nyldon}	\\
	\hline
	{\tt 0} 			& {\tt 0} 		& {\tt 000001}	& {\tt 100000}	& {\tt 0001011}		& {\tt 1000110} 	\\
	{\tt 1} 			& {\tt 1} 		& {\tt 000011}	& {\tt 100001}	& {\tt 0001101}	& {\tt 1000111}	\\
	{\tt 01} 			& {\tt 10} 		& {\tt 000101}	& {\tt 100010}	& {\tt 0001111}	& {\tt 1001010} 	\\
	{\tt 001}		& {\tt 100} 		& {\tt 000111}	& {\tt 100011}	& {\tt 0010011}	& {\tt 1001100}	\\
	{\tt 011}		& {\tt 101} 		& {\tt 001011}	& {\tt 100110}	& {\tt 0010101}	& {\tt 1001110} 	\\
	{\tt 0001}		& {\tt 1000} 		& {\tt 001101}	& {\tt 100111}	& {\tt 0010111}	& {\tt 1001111} 	\\
	{\tt 0011}		& {\tt 1001} 		& {\tt 001111}	& {\tt 101100}	& {\tt 0011011}	& {\tt 1011000} 	\\
	{\tt 0111}		& {\tt 1011} 		& {\tt 010111}	& {\tt 101110}	& {\tt 0011101}	& {\tt 1011001} 	\\
	{\tt 00001}		& {\tt 10000} 	& {\tt 011111}	& {\tt 101111}	& {\tt 0011111}	& {\tt 1011010} 	\\
	{\tt 00011}		& {\tt 10001} 	& {\tt 0000001}	& {\tt 1000000}	& {\tt 0101011}	& {\tt 1011100} 	\\
	{\tt 00101}		& {\tt 10010} 	& {\tt 0000011}	& {\tt 1000001}	& {\tt 0101111}	& {\tt 1011101}	\\ 
	{\tt 00111}		& {\tt 10011} 	& {\tt 0000101}	& {\tt 1000010}	& {\tt 0110111}	& {\tt 1011110} 	\\
	{\tt 01011}		& {\tt 10110} 	& {\tt 0000111}	& {\tt 1000011}	& {\tt 0111111}	& {\tt 1011111} 	\\
	{\tt 01111}		& {\tt 10111} 	& {\tt 0001001}	& {\tt 1000100}	& 				& 				 
	\end{tabular}
\]
\caption{The binary Lyndon and Nyldon words up to length $7$.}
\label{table:first-Lyndon-Nyldon}
\end{table}
\end{example}

Note that Nyldon words are not lexicographically extremal among their conjugates since, for instance, ${\tt 101}$ is a binary Nyldon.

\section{Prefixes of Nyldon words}
\label{sec:prefixes}

Apart from the letters, the binary Nyldon words of Table~\ref{table:first-Lyndon-Nyldon} all start with the prefix ${\tt 10}$. This fact is true for longer lengths as well as all alphabet sizes, as shown by the following proposition. However, note that both ${\tt 100}$ and ${\tt 101}$ are prefixes of Nyldon words (see Table~\ref{table:first-Lyndon-Nyldon}).

\begin{proposition}
\label{prop:prefixes}
Each Nyldon word over $\{{\tt 0},{\tt 1},\ldots,{\tt m}\}$ of length at least $2$ starts with ${\tt ij}$ with ${\tt 0\le j< i \le m}$. 
\end{proposition}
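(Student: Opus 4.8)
The plan is to proceed by strong induction on the length $n$ of a Nyldon word, proving two statements simultaneously: (a) every Nyldon word of length at least $2$ begins with a letter that is \emph{not} the smallest letter ${\tt 0}$, i.e.\ begins with some ${\tt i}$ with ${\tt 0 < i \le m}$; and (b) more precisely, the second letter is strictly smaller than the first, so the prefix of length $2$ is ${\tt ij}$ with ${\tt 0 \le j < i \le m}$. For the induction, suppose $w$ is Nyldon with $|w| = n \ge 2$ and that the claim holds for all shorter Nyldon words. First I would rule out that $w$ starts with ${\tt 0}$: if $w = {\tt 0}v$ with $v \in A^+$, then $({\tt 0})$ together with any Nyldon factorization $(n_1,\ldots,n_k)$ of $v$ (which exists by Proposition~\ref{prop:existence}) would give a factorization $({\tt 0}, n_1, \ldots, n_k)$ of $w$; since ${\tt 0} \le_{\lex} n_1$ automatically (every word is $\ge_{\lex}$ the letter ${\tt 0}$), this is a nondecreasing Nyldon factorization, and all its factors are shorter than $w$ unless $k=1$ and $n_1 = v$ with $|v| = n-1 < n$ — in all cases the factors are strictly shorter than $w$, contradicting that $w$ is Nyldon. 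Hence $w = {\tt i}v$ with ${\tt i} > {\tt 0}$.

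Next I would establish part (b). Write $w = {\tt i}v$ with ${\tt i} > {\tt 0}$, and suppose for contradiction that the second letter of $w$ is some ${\tt j} \ge {\tt i}$, i.e.\ $v$ starts with ${\tt j} \ge {\tt i}$ (if $|w| = 2$ this already makes $w = {\tt ij}$ with ${\tt j}\ge{\tt i}$ factorizable as $({\tt i},{\tt j})$, a nondecreasing Nyldon factorization into letters, contradiction; so assume $n \ge 3$). The idea is to exhibit a nondecreasing Nyldon factorization of $w$ using shorter factors. Take the Nyldon factorization $(m_1, \ldots, m_r)$ of $v$ guaranteed by Proposition~\ref{prop:existence}; it satisfies $m_1 \le_{\lex} \cdots \le_{\lex} m_r$ and, since $v$ starts with ${\tt j}$, the first factor $m_1$ starts with ${\tt j} \ge {\tt i}$. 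Now I want to compare ${\tt i}$ (as a one-letter Nyldon word, or rather I want to attach the letter ${\tt i}$ somewhere) against $m_1$: since $m_1$ begins with ${\tt j} \ge {\tt i}$, we have ${\tt i} \le_{\lex} m_1$, with the subtle case ${\tt j} = {\tt i}$ needing that ${\tt i}$ is a proper prefix of $m_1$ (true as $|m_1| \ge 1$ and if $|m_1|=1$ then $m_1 = {\tt i} \ge_{\lex} {\tt i}$ still works). Therefore $({\tt i}, m_1, m_2, \ldots, m_r)$ is a factorization of $w$ into Nyldon words, it is lexicographically nondecreasing, and every factor has length $< n$ (the letter ${\tt i}$ has length $1 < n$ since $n \ge 3$, and each $m_t$ has length $\le |v| = n-1 < n$). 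This contradicts $w$ being Nyldon.

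The main obstacle I anticipate is the careful handling of the lexicographic comparison ${\tt i} \le_{\lex} m_1$ in the boundary case where the relevant words share a prefix — i.e.\ making sure that when ${\tt j} = {\tt i}$ the one-letter word ${\tt i}$ really is $\le_{\lex} m_1$ rather than accidentally strictly greater, which here is immediate because a single letter ${\tt i}$ is either equal to $m_1$ or a proper prefix of $m_1$, hence $\le_{\lex} m_1$ by the definition of the lexicographic order recalled in Section~\ref{sec:def}. A secondary point to check cleanly is the base case $n = 2$: the only way a length-$2$ word ${\tt ab}$ can fail to admit the nondecreasing Nyldon factorization $({\tt a},{\tt b})$ into letters is if ${\tt a} >_{\lex} {\tt b}$, i.e.\ ${\tt a} > {\tt b}$, which forces exactly the form ${\tt ij}$ with ${\tt j} < {\tt i}$, matching the statement and in particular giving ${\tt i} \ge {\tt 1} > {\tt 0}$. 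Once these comparisons are pinned down, the induction closes and the proposition follows.
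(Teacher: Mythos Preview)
Your argument is correct and is essentially the same as the paper's: given $w={\tt ij}u$ with ${\tt i}\le{\tt j}$, take a Nyldon factorization $(n_1,\ldots,n_k)$ of ${\tt j}u$, observe that ${\tt i}\le_{\lex}{\tt j}\le_{\lex}n_1$, and conclude that $({\tt i},n_1,\ldots,n_k)$ is a nondecreasing Nyldon factorization of $w$ into shorter words. Two minor remarks: your strong-induction framework is never actually invoked (the argument is direct, not inductive), and your part~(a) is a special case of part~(b) (if the first letter is ${\tt 0}$ then the second letter is automatically $\ge{\tt 0}$), so the paper's single contrapositive argument covers everything at once.
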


\begin{proof}
Let $w={\tt ij}u$, with $u\in\{{\tt 0},{\tt 1},\ldots,{\tt m}\}^*$ and ${\tt 0\le i\le j \le m}$. Our aim is to show that $w$ cannot be Nyldon. Let $(n_1,\ldots, n_k)$ be a Nyldon factorization of ${\tt j}u$. Then $n_1$ begins with ${\tt j}$. Since ${\tt i}$ is Nyldon, ${\tt i}\le_{\lex}{\tt j}\le_{\lex} n_1$ and $k\ge1$, we obtain that $({\tt i},n_1,\ldots,n_k)$ is a Nyldon factorization of $w$ of length at least $2$, whence $w$ is not Nyldon.
\end{proof}

For instance, all binary Nyldon of length at least $2$ start with the prefix ${\tt 10}$, and all ternary Nyldon words of length at least $2$ start with ${\tt 10}$, ${\tt 20}$ or ${\tt 21}$. Other prefixes than those of Propostion~\ref{prop:prefixes} are forbidden in the family of Nyldon words. We introduce the following definition.

\begin{definition}
We say that a finite word $p$ over $A$ is a {\em forbidden prefix} if no Nyldon word over $A$ starts with $p$.
\end{definition}

In the following proposition, we exhibit a family of forbidden prefixes by generalizing the argument of Propostion~\ref{prop:prefixes}.

\begin{proposition}
\label{prop:forbidden-pref}
All elements of the set
\begin{multline*}
	F= \{p_1p_2p_3\in A^*\colon 
							p_1\in\Nyl,\ 
							p_1\le_{\lex} p_2\ 
							\text{ and, for all } u\in A^*  \text{ and for all Nyldon}\\
							\text{factorizations } 
							(n_1,\ldots, n_k) \text{ of }  p_2p_3u, \text{ one has } |n_1|\ge |p_2|
							\}
\end{multline*}
are forbidden prefixes.
\end{proposition}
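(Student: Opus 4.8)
The plan is to show that any word of the form $w = p_1p_2p_3 u'$ with $p_1p_2p_3 \in F$ (and $u' \in A^*$ arbitrary) admits a Nyldon factorization of length at least $2$, hence is not Nyldon; this is exactly the statement that $p_1p_2p_3$ is a forbidden prefix. So fix $p_1 \in \Nyl$, $p_2$ with $p_1 \le_{\lex} p_2$, and $p_3$ such that the quantifier condition in the definition of $F$ holds, and take any continuation $u' \in A^*$. I would apply Proposition~\ref{prop:existence} to the word $p_2 p_3 u'$ to get a Nyldon factorization $(n_1, \ldots, n_k)$ of $p_2p_3u'$ with $n_1 \le_{\lex} \cdots \le_{\lex} n_k$. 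By hypothesis on $p_3$ (instantiating the universally quantified $u$ with our $u'$), we have $|n_1| \ge |p_2|$, which means $p_2$ is a prefix of $n_1$.

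The key step is then to compare $p_1$ with $n_1$ lexicographically and conclude that $(p_1, n_1, n_2, \ldots, n_k)$ is a Nyldon factorization of $w$. Since $p_1 \le_{\lex} p_2$ and $p_2$ is a prefix of $n_1$, I want to deduce $p_1 \le_{\lex} n_1$. If $p_1 <_{\lex} p_2$, then because $p_2 \le_{\lex} n_1$ (as $p_2$ is a prefix of $n_1$, hence $p_2 \le_{\lex} n_1$) we get $p_1 <_{\lex} n_1$ by transitivity. If $p_1 = p_2$, then $p_1$ is a prefix of $n_1$; it is a \emph{proper} prefix unless $p_1 = n_1$, and in either case $p_1 \le_{\lex} n_1$ directly from the definition of $\le_{\lex}$. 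So in all cases $p_1 \le_{\lex} n_1 \le_{\lex} n_2 \le_{\lex} \cdots \le_{\lex} n_k$, and since each $n_i$ is Nyldon and $p_1 \in \Nyl$, the sequence $(p_1, n_1, \ldots, n_k)$ is a Nyldon factorization of $w = p_1 p_2 p_3 u' = p_1 (p_2 p_3 u')$ of length $k+1 \ge 2$. By the characterization recalled just after Definition~\ref{def:Nyldon-recursive} (a word is Nyldon iff its only Nyldon factorization is the trivial one), $w$ is not Nyldon.

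I do not anticipate a serious obstacle here: the argument is a direct generalization of the proof of Proposition~\ref{prop:prefixes}, where the role of the single letter ${\tt i}$ is played by the Nyldon word $p_1$ and the extra condition on $p_3$ is precisely what guarantees that prepending $p_1$ does not break the nondecreasing order. The one point requiring a little care is the degenerate comparison when $p_1 = p_2$: here one must note that $p_2$ being a prefix of $n_1$ forces $p_1 \le_{\lex} n_1$ even when $p_1 = n_1$ (a word is $\le_{\lex}$ itself), so the concatenation is still a valid nondecreasing factorization. It is also worth observing explicitly that the continuation $u'$ was arbitrary, so \emph{every} word starting with $p_1p_2p_3$ fails to be Nyldon, which is the definition of a forbidden prefix.
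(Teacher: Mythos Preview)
Your proposal is correct and follows essentially the same approach as the paper's proof: take an arbitrary continuation $u$, factor $p_2p_3u$ into Nyldon words, use the hypothesis $|n_1|\ge|p_2|$ to see that $p_2$ is a prefix of $n_1$, deduce $p_1\le_{\lex} n_1$, and prepend $p_1$ to obtain a Nyldon factorization of length at least $2$. The only difference is cosmetic: the paper states the inequality $p_1\le_{\lex} n_1$ directly, whereas you spell out the case distinction $p_1<_{\lex}p_2$ versus $p_1=p_2$ (both are subsumed by transitivity of $\le_{\lex}$ together with $p_2\le_{\lex} n_1$).
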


\begin{proof}
Let $p\in F$. Then it can be decomposed as $p=p_1p_2p_3$ where $p_1,p_2,p_3$ satisfy the properties described in the statement (see Figure~\ref{fig:forbidden-pref}). 
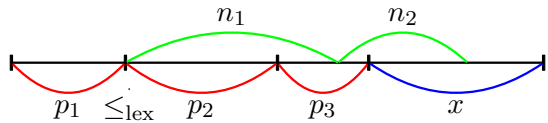
\begin{figure}[h]
\begin{picture}(55, 15)(-40,-10)
 	\gasset{Nw=.3,Nh=2,Nmr=0,fillgray=0} 
  	\gasset{ExtNL=y,NLdist=1,NLangle=-90} 
  		\node(A)(0,  0){}
  		\node(B)(15,0){}
		\node(C)(35,0){}
   		\node(D)(47,0){}
   		\node(F)(70,0){}
   
  	\gasset{Nw=0,Nh=0,Nmr=0,fillgray=0} 
  	\gasset{ExtNL=y,NLdist=1,NLangle=0} 
    		\node(E1)(43,0){}
	  	\node(E2)(60,0){}
		
  	\gasset{ExtNL=y,NLdist=1,NLangle=-90} 
    		\node(BC)(15.5,-3.5){$\le_{\lex}$}
    
    	\gasset{ExtNL=y,NLdist=1,NLangle=180} 
		\drawedge[linewidth=0.3, AHnb=0, ELside=l](A,F){}
               
	\gasset{curvedepth=-4}
		\drawedge[linecolor=red, linewidth=0.3, AHnb=0,ELside=r](A,B){$p_1$}
		\drawedge[linecolor=red, linewidth=0.3, AHnb=0,ELside=r](B,C){$p_2$}
		\drawedge[linecolor=red, linewidth=0.3, AHnb=0,ELside=r](C,D){$p_3$}
		\drawedge[linecolor=blue, linewidth=0.3, AHnb=0,ELside=r](D,F){$x$}

	\gasset{curvedepth=4}
		\drawedge[linecolor=green, linewidth=0.3, AHnb=0,ELside=l](B,E1){$n_1$}
		\drawedge[linecolor=green, linewidth=0.3, AHnb=0,ELside=l](E1,E2){$n_2$}
\end{picture}
\caption{A family of forbidden prefixes.}
\label{fig:forbidden-pref}
\end{figure}
We have to show that all words over $A$ starting with $p$ are not Nyldon. Let $w=pu$, with $u\in A^*$. Let $(n_1,\ldots ,n_k)$ be a Nyldon factorization of $p_2p_3u$. By definition of $F$,  $p_1\le_{\lex}p_2$ and $p_2$ must be a prefix of $n_1$, hence $p_1\le_{\lex} n_1$. Since we also have $p_1\in\Nyl$, we obtain that $(p_1,n_1,\ldots,n_k)$ is a Nyldon factorization of $w$ of length at least $2$, whence $w$ is not Nyldon.
\end{proof}

\begin{example}
We obtain from Proposition~\ref{prop:forbidden-pref} that, for all $k\in\N$, the words ${\tt 10}^k{\tt 10}^k$,  ${\tt 10}^k{\tt 1011}$, ${\tt 101}^{k+1}{\tt 01}^{k+1}$, ${\tt 10}^{k+2}{\tt 110}^{k+1}{\tt 11}$  are forbidden prefixes. The corresponding factors $p_1,p_2,p_3$ as in the definition of the set $F$ are given in Table~\ref{table:ex-F}.

\begin{table}[htbp]
\centering
\begin{tabular}{c|c|c|c}
$w$									& $p_1$					& $p_2$ 				& $p_3$ 		\\		
\hline
${\tt 10}^k{\tt 10}^k$					& ${\tt 10}^k$ 			& ${\tt 10}^k$ 			& $\varepsilon$	\\
${\tt 10}^k{\tt 1011}$					& ${\tt 10}^k$ 			& ${\tt 101}$ 			& ${\tt1}$		\\
${\tt 101}^{k+1}{\tt 01}^{k+1}$ 			& ${\tt 101}^k$			& ${\tt 101}^k$			& ${\tt 1}$ 		\\
${\tt 10}^{k+2}{\tt 110}^{k+1}{\tt 11}$ 	& ${\tt 10}^{k+2}{\tt 1}$	& ${\tt 10}^{k+1}{\tt 1}$	& ${\tt 1}$
\end{tabular}
\caption{Some forbidden prefixes.}
\label{table:ex-F}
\end{table}	

\end{example}

All the examples of forbidden prefixes that we have come from the family $F$ of Proposition~\ref{prop:forbidden-pref}. In general, it seems hard to understand the prefixes of Nyldon words, while we are able to understand  their suffixes quite well (see Section~\ref{sec:suffixes}). We leave the following unresolved question as an open problem.

\begin{open}
\label{op:sesqui}
Characterize the language of forbidden prefixes. In particular, prove or disprove that there are other forbidden prefixes than those given by the family $F$ of Proposition~\ref{prop:forbidden-pref}. 
\end{open}

As is well known, the nonempty prefixes of Lyndon words are exactly the sesquipowers of Lyndon words distinct of the maximal letter~\cite{Duval1983,Knuth2011}. A {\em sesquipower} of a word $x$ is a word $x^kp$ where $k$ is a positive integer and $p$ is a proper prefix of $x$. 
It is not true that all sesquipowers of Nyldon words distinct of the letters can be prefixes of Nyldon words since for example ${\tt 1010}$ is a forbidden prefix. Moreover, we know that each prefix of a Lyndon word is a sesquipower of exactly one Lyndon word \cite{Duval1983,Knuth2011}. This property does not hold for Nyldon words either since for example, the word ${\tt 101101}$ is the prefix of the Nyldon word ${\tt 1011011}$ and is a sesquipower of both the Nyldon words ${\tt 101}$ and ${\tt 10110}$. However, we do not know whether all prefixes of Nyldon words are sesquipowers of Nyldon words. 

\begin{open}
Prove or disprove that prefixes of Nyldon words must be sesquipowers of Nyldon words.
\end{open}

\section{Counting Nyldon words of length $n$}
\label{sec:counting}

As already noticed by Grinberg, if we can prove the unicity of the Nyldon factorization (which is actually proved in Section~\ref{sec:unicity}), then we know that there are equally many Lyndon and Nyldon words of each length. Indeed, suppose that every finite word admits a unique Nyldon factorization. We proceed by induction on the length $n$ of the words to show that $\#(\Nyl\cap A^n)=\#(\Lyn\cap A^n)$ for all $n\ge1$. Since all the letters are both Lyndon and Nyldon, the base case is verified. Now suppose that $n\ge2$ and that, for all $m<n$, there is the same number of Lyndon and Nyldon words of length $m$. Since we have assumed that the Nyldon factorization of each word is unique, the number of words of length $n$ that are not Nyldon is equal to the number of Nyldon factorizations $(n_1,\ldots,n_k)$ such that $|n_1|,\ldots,|n_k|<n$. Similarly, we know by Theorem~\ref{the:Lyndon-fac} that the Lyndon factorization of each word is unique, hence the number of words of length $n$ that are not Lyndon is equal to the number of Lyndon factorizations $(\ell_1,\ldots,\ell_k)$ such that $|\ell_1|,\ldots,|\ell_k|<n$. Then, using the induction hypothesis, we obtain that $\#(\Nyl^c\cap A^n)=\#(\Lyn^c\cap A^n)$, where $X^c$ denotes the complement of the set $X$. This, of course, implies that $\#(\Nyl\cap A^n)=\#(\Lyn\cap A^n)$ as well.

We illustrate this construction for the binary words of length $4$. Table~\ref{table:counting-Nyldon} is built as follows. The left column contains the non-Lyndon words of length $4$, ordered in decreasing lexicographic order. If the non-Lyndon word of a certain row admits the Lyndon factorization $(\ell_1,\ldots,\ell_k)$, then the corresponding non-Nyldon word in the same row has the Nyldon factorization $(n_1,\ldots,n_k)$, where $\{|\ell_1|,\ldots,|\ell_k|\}=\{|n_1|,\ldots,|n_k|\}$ and for each occurrence of a factor $\ell$ in the Lyndon factorization $(\ell_1,\ldots,\ell_k)$, if $\ell$ the $j$th word of length $|\ell|$ in the list of Lyndon words of length up to $3$ in decreasing lexicographic order, then,  in the corresponding Nyldon factorization $(n_1,\ldots,n_k)$, there is an occurrence of the $j$th word of the same length $|\ell|$  in the list of Nyldon words of length up to $3$ in increasing lexicographic order.
\begin{table}[htbp]
\centering
\begin{tabular}{c|c|c||c|c|c}
    \multicolumn{3}{c||}{Lyndon} & \multicolumn{3}{c}{Nyldon}\\
    \hline
  \multicolumn{3}{c||}{${\tt 1>_{\lex}011>_{\lex}01>_{\lex}001>_{\lex}0}$} & 
\multicolumn{3}{c}{${\tt 0<_{\lex}1<_{\lex}10<_{\lex}100<_{\lex}101}$}  \\
\hline
${\tt 1111}$	& ${\tt (1,1,1,1)}$ 	& $1+1+1+1$ 	& ${\tt 0000}$	& ${\tt (0,0,0,0)}$ 	& $1+1+1+1$ 	\\
${\tt 1110}$	& ${\tt (1,1,1,0)}$ 	& $1+1+1+1$ 	& ${\tt 0001}$	& ${\tt (0,0,0,1)}$ 	& $1+1+1+1$ 	\\
${\tt 1101}$	& ${\tt (1,1,01)}$ 	& $1+1+2$ 		& ${\tt 0010}$	& ${\tt (0,0,10)}$ 	& $1+1+2$ 		\\
${\tt 1100}$	& ${\tt (1,1,0,0)}$ 	& $1+1+1+1$	& ${\tt 0011}$	& ${\tt (0,0,1,1)}$ 	& $1+1+1+1$ 	\\
${\tt 1011}$	& ${\tt (1,011)}$ 		& $1+3$ 		& ${\tt 0100}$	& ${\tt (0,100)}$ 		& $1+3$ 		\\
${\tt 1010}$	& ${\tt (1,01,0)}$ 	& $1+2+1$ 		& ${\tt 0110}$	& ${\tt (0,1,10)}$ 	& $1+1+2$ 		\\
${\tt 1001}$	& ${\tt (1,001)}$ 		& $1+3$ 		& ${\tt 0101}$	& ${\tt (1,101)}$ 		& $1+3$ 		\\
${\tt 1000}$	& ${\tt (1,0,0,0)}$ 	& $1+1+1+1$ 	& ${\tt 0111}$	& ${\tt (0,1,1,1)}$	& $1+1+1+1$ 	\\
${\tt 0110}$	& ${\tt (011,0)}$ 		& $3+1$ 		& ${\tt 1100}$	& ${\tt (1,100)}$ 		& $1+3$ 		\\
${\tt 0101}$	& ${\tt (01,01)}$ 		& $2+2$ 		& ${\tt 1010}$	& ${\tt (10,10)}$ 		& $2+2$ 		\\
${\tt 0100}$	& ${\tt (01,0,0)}$ 	& $2+1+1$ 		& ${\tt 1110}$	& ${\tt (1,1,10)}$ 	& $1+1+2$ 		\\
${\tt 0010}$	& ${\tt (001,0)}$ 		& $3+1$ 		& ${\tt 1101}$	& ${\tt (1,101)}$ 		& $1+3$ 		\\
${\tt 0000}$	& ${\tt (0,0,0,0)}$ 	& $1+1+1+1$ 	& ${\tt 1111}$	& ${\tt (1,1,1,1)}$ 	& $1+1+1+1$ 
\end{tabular}
\caption{There are equally many non-Nyldon words and non-Lyndon words of length $4$.}
\label{table:counting-Nyldon}
\end{table}	

The sequence 
\[
	2, 1, 2, 3, 6, 9, 18, 30, 56, 99, 186,335, 630, 1161,\ldots
\]
that counts the number of binary Lyndon words of length $n\ge 1$ is referred to as A001037 in  Sloane's On-Line Encyclopedia of Integer Sequences.

\section{Suffixes of Nyldon words}
\label{sec:suffixes}

Unlike their prefixes, the suffixes of Nyldon words satisfy nice properties. In this section, we prove two results that will allow us to obtain the unicity of the Nyldon factorization.

\begin{theorem}\label{thm:suffix}
Let $w \in A^*$ be a Nyldon word. For each Nyldon proper suffix $s$ of $w$, we have $s <_{\lex} w$.
\end{theorem}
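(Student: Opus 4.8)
The plan is to argue by contradiction and by strong induction on $|w|$. Suppose $w$ is Nyldon and admits a Nyldon proper suffix $s$ with $s\ge_{\lex}w$. First I would observe that, since $|s|<|w|$, a proper suffix cannot have $w$ as a prefix, and that if $s$ were a prefix of $w$ then $s<_{\lex}w$; hence in fact $s>_{\lex}w$ and $s$ is not a prefix of $w$. The cases $|w|\le 2$ are immediate (for $|w|=2$, $w={\tt ij}$ with ${\tt j<i}$ and its only proper suffix ${\tt j}$ satisfies ${\tt j}<_{\lex}w$), so I assume $|w|\ge 3$ and that the theorem holds for all shorter Nyldon words; it is natural to take $s$ of maximal length among the Nyldon proper suffixes of $w$ that are $\ge_{\lex}w$. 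I write $w=us$ with $u\in A^+$ and fix a Nyldon factorization $(n_1,\dots,n_j)$ of $u$ (Proposition~\ref{prop:existence}). The goal throughout is to exhibit a nondecreasing factorization of $w$ into Nyldon words of length at least $2$, which contradicts that $w$ is Nyldon.

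I would then clear away the easy situations. If $n_j\le_{\lex}s$, then $(n_1,\dots,n_j,s)$ is already such a factorization, so $n_j>_{\lex}s$. If $j=1$, then $u=n_1$ is Nyldon with $s\ge_{\lex}n_1s$ while $n_1>_{\lex}s$; comparing $n_1$, $s$ and $n_1s$ letter by letter (either $s$ is a proper prefix of $n_1$, and hence a proper prefix of $n_1s$, or $s$ and $n_1$ first differ at a position inside $n_1$ with the letter of $s$ smaller there) forces $n_1s>_{\lex}s$, a contradiction; hence $j\ge 2$. Finally, if $n_js$ is Nyldon, then $(n_1,\dots,n_{j-1},n_js)$ works, because $n_{j-1}\le_{\lex}n_j<_{\lex}n_js$ (as $n_j$ is a proper prefix of $n_js$) and its length is $j\ge 2$. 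So $n_js$ is not Nyldon, and I fix a Nyldon factorization $(v_1,\dots,v_p)$ of $n_js$ with $p\ge 2$.

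Next I would split on how the Nyldon prefix $v_1$ of $n_js$ lies relative to $n_j$. Both $v_1$ and $n_j$ are prefixes of $n_js=v_1\cdots v_p$, so one is a prefix of the other; moreover $v_1\ne n_j$, for $v_1=n_j$ would force $v_2\cdots v_p=s$, hence $v_2=s$ and $p=2$, and then the nondecreasing condition would give $n_j=v_1\le_{\lex}v_2=s$, contradicting $n_j>_{\lex}s$. If $n_j$ is a proper prefix of $v_1$, then $n_{j-1}\le_{\lex}n_j\le_{\lex}v_1$, so $(n_1,\dots,n_{j-1},v_1,\dots,v_p)$ is a nondecreasing factorization of $w$ into Nyldon words of length $(j-1)+p\ge 2$, a contradiction. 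Hence $v_1$ is a proper Nyldon prefix of $n_j$; I write $n_j=v_1z$ with $z\ne\varepsilon$ and fix a Nyldon factorization $(z_1,\dots,z_q)$ of $z$. Since $(v_1,z_1,\dots,z_q)$ is a factorization of the Nyldon word $n_j$ into strictly shorter Nyldon words, it cannot be nondecreasing, so $v_1>_{\lex}z_1$; and since $(n_1,\dots,n_{j-1},v_1,\dots,v_p)$ is not a Nyldon factorization of $w$ while $n_1\le_{\lex}\cdots\le_{\lex}n_{j-1}$ and $v_1\le_{\lex}\cdots\le_{\lex}v_p$, the only possible failure is $n_{j-1}>_{\lex}v_1$.

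The crux is the remaining configuration: $v_1<_{\lex}n_{j-1}\le_{\lex}n_j=v_1z$, $v_1>_{\lex}z_1$, $v_2\cdots v_p=zs$, and $v_1\le_{\lex}v_2\le_{\lex}\cdots\le_{\lex}v_p$. Here I would analyze how the Nyldon factorization $(v_2,\dots,v_p)$ of $zs$ crosses the boundary between $z$ and $s$: comparing $v_2$ with $z_1$ (both prefixes of $zs$), the inequalities $v_1\le_{\lex}v_2$ and $v_1>_{\lex}z_1$ force $z_1$ to be a prefix of $v_2$; one then iterates this comparison, peeling Nyldon prefixes off $n_j$ and repeatedly invoking the inductive hypothesis on the strictly shorter Nyldon words $n_j$ and $s$ (and Proposition~\ref{prop:prefixes} to control first letters), until a nondecreasing factorization of $w$ of length at least $2$ can be rebuilt, the desired contradiction. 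I expect this last step to be the main obstacle, the delicate part being the bookkeeping of which Nyldon prefix is a prefix of which and the repeated appeals to the inductive hypothesis. A cleaner way to organize it would be to prove, by the same induction (for all lengths below $|w|$ simultaneously), the auxiliary statement that the concatenation $nn'$ of two Nyldon words with $n>_{\lex}n'$ is again Nyldon: this makes the situation ``$n_js$ not Nyldon and $j\ge 2$'' vacuous, so that the reasoning of the second paragraph already closes the argument.
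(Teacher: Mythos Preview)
Your overall strategy mirrors the paper's: induct on $|w|$, choose $s$ of maximal length among the offending Nyldon proper suffixes, write $w=us$, show $u$ is not Nyldon (your $j=1$ case), show the last Nyldon factor $n_j$ of $u$ satisfies $n_j>_{\lex}s$, show $n_js$ is not Nyldon, and then study a Nyldon factorization $(v_1,\dots,v_p)$ of $n_js$. Up to this point you are on the paper's track.

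Where your argument stalls is exactly where the paper's key idea enters, and your proposed shortcut is incorrect. The paper does \emph{not} peel prefixes off $n_j$. It directly locates the index $i$ such that $v_i$ straddles the boundary between $n_j$ and $s$, i.e.\ $v_i=xy$ with $n_j=v_1\cdots v_{i-1}x$ and $s=y\,v_{i+1}\cdots v_p$ (the case of an exact split is ruled out as you noted). Two short steps then finish. First, $i\ne p$: otherwise $v_p=xs$ is a Nyldon proper suffix of $w$ longer than $s$, and the induction hypothesis applied to $v_p$ (of length $<|w|$) gives $s<_{\lex}v_p$, whence $v_p>_{\lex}s\ge_{\lex}w$, contradicting the maximality of $|s|$. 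This is the \emph{only} place maximality of $s$ is used, and your write-up never invokes it. Second, for $i\le p-1$, take a Nyldon factorization $(y_1,\dots,y_t)$ of $y$; then $y_t$ is a Nyldon proper suffix of the shorter Nyldon word $v_i$, so by induction $y_t<_{\lex}v_i\le_{\lex}v_{i+1}$, and $(y_1,\dots,y_t,v_{i+1},\dots,v_p)$ is a Nyldon factorization of $s$ of length $\ge 2$, contradicting that $s$ is Nyldon. Your case ``$n_j$ a proper prefix of $v_1$'' is precisely $i=1$, and your alternative shortcut there (exhibiting $(n_1,\dots,n_{j-1},v_1,\dots,v_p)$ as a Nyldon factorization of $w$) is a pleasant variant; but the vague left-to-right ``peeling'' you sketch for the remaining case is not the right mechanism, and without the maximality argument it cannot close.

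More seriously, the ``cleaner way'' you propose at the end is false: it is \emph{not} true that $n>_{\lex}n'$ with $n,n'$ Nyldon forces $nn'$ to be Nyldon. Take $n={\tt 10010100}$ and $n'={\tt 10}$: both are Nyldon and $n>_{\lex}n'$, yet $nn'={\tt 1001010010}$ has Nyldon factorization $({\tt 10010},{\tt 10010})$ and is not Nyldon (this very example appears later in the paper). So the auxiliary statement cannot be proved, and the argument must go through the straddling-factor analysis above.
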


\begin{proof}
We proceed by induction on $|w|$. The case $|w|=1$ is obvious. If $|w|=2$, write $w={\tt ij}$ with ${\tt i,j}\in A$. The only Nyldon proper suffix of $w$ is its last letter ${\tt j}$ and ${\tt j <_{\lex}  i} <_{\lex} w$ by Proposition~\ref{prop:prefixes}.

Now, we suppose that $|w|\ge 3$ and that the result is true for all Nyldon words shorter than $w$. Proceed by contradiction and assume that there exists a Nyldon proper suffix $s$ of $w$ such that 
\begin{equation}
\label{eq: s and w}
	s \ge_{\lex} w.
\end{equation}
Among all such suffixes of $w$, we choose $s$ to be the longest and we write $w=ps$ with $p\in A^*$. Our goal is to reach the contradiction that this well-chosen suffix $s$ is not Nyldon by exhibiting a Nyldon factorization of $s$ of length at least $2$.

First, we show that $p\notin \Nyl$. Indeed, if $p \in \Nyl$ and $p \le_{\lex} s$, then $(p,s)$ is a Nyldon factorization of $w$ of length $2$, contradicting that $w$ is Nyldon. Thus, if $p$ were Nyldon, then $s<_{\lex} p<_{\lex} w$, which would contradict the assumption \eqref{eq: s and w} on $s$. Hence $p\notin\Nyl$.

Let $(p_1, \ldots, p_k)$ be a Nyldon factorization of $p$ of length $k\ge 2$. Since $w,s\in \Nyl$, we must have 
\begin{equation}
\label{eq:p_k and s}
	p_k >_{\lex} s,
\end{equation}
for otherwise $(p_1, \ldots, p_{k-1},p_k,s)$ would be a Nyldon factorization of $w$ of length $k+1\ge 2$. Moreover, $p_ks \notin \Nyl$ for otherwise $(p_1, \ldots, p_{k-1},p_ks)$ would be a Nyldon factorization of $w$ of length $k\ge 2$.

Now let $(n_1, \ldots, n_\ell)$ be a Nyldon factorization of $p_ks$ of length $\ell\ge 2$. We claim that there exist $i\in\{1,\ldots, \ell\}$ and $x,y\in A^+$ such that 
\[	
	n_i = xy,\quad p_k=n_1 \cdots n_{i-1} x \quad\text{ and }\quad s= y n_{i+1} \cdots n_\ell.
\]
Indeed, suppose instead that $p_k=n_1 \cdots n_j$ and $s=n_{j+1} \cdots n_\ell$ with $1\le j \le \ell-1$ (recall that $p_k$ and $s$ are not empty). But since both $p_k$ and $s$ are Nyldon, this implies $p_k=n_1$ and $s=n_2$ (and hence, $\ell=2$). But $n_1 \le_{\lex}  n_2$, which contradicts~\eqref{eq:p_k and s}. 

Next, we must have $i\le \ell-1$. Indeed, suppose that $i=\ell$. Then $y=s$ and $n_\ell=xs$. By induction hypothesis, we would get that $s <_{\lex} n_\ell$. Then, by using~\eqref{eq: s and w}, we obtain that $n_\ell$ is a Nyldon proper suffix of $w$ longer than $s$ and such that  $n_\ell>_{\lex}s\ge_{\lex}w$. This is impossible by maximality of the length of $s$.

Now let $(y_1, \ldots, y_t)$ be a Nyldon factorization of $y$ (with $t\ge 1$). Then $y_t$ is a Nyldon proper suffix of $n_i$, which is itself Nyldon. Since $|n_i| < |w|$, the induction hypothesis yields $y_t <_{\lex} n_i$. But then, since  $n_i \le_{\lex} n_{i+1}$, we obtain that $(y_1, \ldots, y_t,n_{i+1}, \ldots, n_\ell)$ is a Nyldon factorization of $s$ of length at least $2$, contradicting that $s$ is Nyldon as announced.
\end{proof}

\begin{theorem}
\label{thm:longest-Nyldon-suffix}
Let $w\in A^+$ and let $(n_1,\ldots,n_k)$ be a Nyldon factorization of $w$. Then $n_k$ is the longest Nyldon suffix of $w$. 
\end{theorem}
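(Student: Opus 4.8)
The plan is to prove the two inequalities implicit in the statement: first that $n_k$ is \emph{a} Nyldon suffix of $w$, which is immediate since $n_k$ is a Nyldon word and a suffix of $w=n_1\cdots n_k$; and second that any Nyldon suffix $s$ of $w$ satisfies $|s|\le |n_k|$. So fix a Nyldon suffix $s$ of $w$ and suppose for contradiction that $|s|>|n_k|$. Then $s$ overlaps the factor $n_{k-1}$ (or further to the left), and there is an index $i<k$ such that $n_i=xy$ with $x,y\in A^+$, $s=y\,n_{i+1}\cdots n_k$, and $n_1\cdots n_{i-1}x$ is the prefix of $w$ complementary to $s$. (If $s$ starts exactly at a factor boundary, say $s=n_j\cdots n_k$ with $j\ge 1$, then handle this separately below.)

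The key step is to produce a Nyldon factorization of $s$ of length at least $2$, which will contradict the assumption that $s$ is Nyldon. For this I would take a Nyldon factorization $(y_1,\ldots,y_t)$ of $y$ with $t\ge 1$. Then $y_t$ is a Nyldon proper suffix of $n_i$ (proper because $y$ is a proper suffix of $n_i$, as $x\neq\varepsilon$), so by Theorem~\ref{thm:suffix} we get $y_t<_{\lex} n_i$. Since $(n_1,\ldots,n_k)$ is a Nyldon factorization we have $n_i\le_{\lex} n_{i+1}$, hence $y_t<_{\lex} n_{i+1}$, and therefore
\[
	(y_1,\ldots,y_t,n_{i+1},\ldots,n_k)
\]
is a nondecreasing sequence at the junction $y_t,n_{i+1}$; combined with $y_1\le_{\lex}\cdots\le_{\lex}y_t$ and $n_{i+1}\le_{\lex}\cdots\le_{\lex}n_k$, it is a Nyldon factorization of $s=y\,n_{i+1}\cdots n_k$ of length $t+(k-i)\ge 2$ (since $k-i\ge 1$), contradicting that $s$ is Nyldon. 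In the boundary case $s=n_j\cdots n_k$ with $1\le j<k$, the sequence $(n_j,\ldots,n_k)$ is already a Nyldon factorization of $s$ of length $\ge 2$, giving the same contradiction directly; and the case $j=k$ is just $|s|=|n_k|$, which is what we want.

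The main obstacle is organizing the overlap cases cleanly: one must argue that any suffix $s$ longer than $n_k$ either ends exactly at a factor boundary or strictly cuts through some factor $n_i$, and in the latter case that the piece $y$ of $n_i$ inside $s$ is a \emph{proper} nonempty suffix of $n_i$ so that Theorem~\ref{thm:suffix} applies with a strict inequality. This is really the same overlap bookkeeping already carried out in the proof of Theorem~\ref{thm:suffix}, so the argument is short; the only subtlety is making sure we never need $i=k$ (we do not, because $|s|>|n_k|$ forces the cut to lie strictly to the left of the start of $n_k$, so $n_k$ appears untouched at the end of the truncated list).
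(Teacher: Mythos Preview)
Your proof is correct and follows essentially the same argument as the paper's: both show that a Nyldon suffix $s$ with $|s|>|n_k|$ yields a Nyldon factorization of $s$ of length at least $2$ by splitting the factor $n_i$ it cuts through, factoring the tail $y$ as $(y_1,\ldots,y_t)$, and invoking Theorem~\ref{thm:suffix} to get $y_t<_{\lex}n_i\le_{\lex}n_{i+1}$. The only cosmetic difference is that the paper phrases it as ``let $s$ be the longest Nyldon suffix and show $|s|=|n_k|$'' while you quantify over all Nyldon suffixes, but the case split (boundary versus strict cut) and the use of Theorem~\ref{thm:suffix} are identical.
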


\begin{proof}
Let $s$ denote the longest Nyldon suffix of $w$. If $w$ is Nyldon, then $k=1$ and $s=w=n_k$. Suppose now that $w$ is not Nyldon. Then $k\ge 2$ and $s$ is a proper suffix of $w$. Write $w=ps$. By choice of $s$ and since $n_k$ is Nyldon, we have $|n_k| \le |s|$. Let us show that $|n_k|=|s|$. Proceed by contradiction and suppose that $|n_k| < |s|$. Since $s$ is Nyldon, we cannot have $s=n_i\cdots n_k$ with $i<k$. Therefore, there must exist $i \in \{1,\ldots,k-1\}$ and $x,y\in A^+$ such that $n_i=xy$, $p=n_1 \cdots n_{i-1} x$ and $s=y n_{i+1} \cdots n_k$.  Let $(y_1,\ldots,y_t)$ be a Nyldon factorization of $y$ (with $t\ge 1$). From Theorem~\ref{thm:suffix}, we deduce that $y_t <_{\lex} n_i$ since $y_t$ is a proper suffix of $n_i$. But then, since $n_i\le_{\lex} n_{i+1}$, we obtain that $(y_1,\ldots,y_t,n_{i+1},\ldots,n_k)$ is a Nyldon factorization of $s$ of length at least $2$, contradicting that $s$ is Nyldon. Consequently, $|n_k|=|s|$, which in turn implies $n_k=s$. 
\end{proof} 

However, it is not true that the longest Nyldon prefix of $w$ is the first factor of any Nyldon factorization, as illustrated below.

\begin{example}
Let $w={\tt 10100}$. Then $({\tt 10,100})$ is a Nyldon factorization of $w$ although its longest Nyldon prefix is ${\tt 101}$. 
\end{example}

\section{Unicity of the Nyldon factorization}
\label{sec:unicity}

Using Theorem~\ref{thm:longest-Nyldon-suffix}, we obtain that there can be only one Nyldon factorization of each word, which positively answers  the second question of Grinberg mentioned in the introduction (and hence the first as well in view of Section~\ref{sec:counting}). In particular, the Nyldon words form a complete factorization of the free monoid $A^*$, which we make explicit below.

\begin{theorem}
\label{thm:unicity}
For every finite word $w$ over $A$, there exists a unique sequence $(n_1,\ldots,n_k)$ of Nyldon words such that $w=n_1\cdots n_k$ and $n_1\le_{\lex}\cdots\le_{\lex}n_k$.
\end{theorem}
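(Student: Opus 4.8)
The plan is to prove existence and uniqueness separately. Existence is already in hand: it is exactly Proposition~\ref{prop:existence}, which follows immediately from the recursive definition of Nyldon words. So the real content is uniqueness, and here the strategy is a straightforward induction on $|w|$ that peels off the \emph{last} factor using Theorem~\ref{thm:longest-Nyldon-suffix}.

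The induction goes as follows. For $|w|=0$ the only factorization is the empty one, and for $|w|=1$ the word is a letter, hence Nyldon, and its only Nyldon factorization is $(w)$ since any factorization of a single letter has length $1$. Now suppose $|w|\ge 2$ and that every word shorter than $w$ has a unique Nyldon factorization. Let $(n_1,\ldots,n_k)$ and $(n_1',\ldots,n_{k'}')$ be two Nyldon factorizations of $w$. By Theorem~\ref{thm:longest-Nyldon-suffix}, both $n_k$ and $n_{k'}'$ equal the longest Nyldon suffix of $w$; in particular $n_k=n_{k'}'$. Write $w=w'n_k$ where $w'=n_1\cdots n_{k-1}=n_1'\cdots n_{k'-1}'$. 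Then $(n_1,\ldots,n_{k-1})$ and $(n_1',\ldots,n_{k'-1}')$ are both factorizations of $w'$ into Nyldon words; they are lexicographically nondecreasing (being subsequences of nondecreasing sequences), hence both are Nyldon factorizations of $w'$. Since $|w'|<|w|$, the induction hypothesis forces $k-1=k'-1$ and $n_i=n_i'$ for all $i\le k-1$. Combined with $n_k=n_{k'}'$, this gives $k=k'$ and equality of the two factorizations, completing the induction.

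The only subtlety worth spelling out — and the one place a careless argument could go wrong — is the verification that truncating a Nyldon factorization from the right again yields a Nyldon factorization of the prefix: one needs that $(n_1,\ldots,n_{k-1})$ is still lexicographically nondecreasing (clear, as it is an initial segment of a nondecreasing sequence) and, more importantly, that each $n_i$ with $i\le k-1$ is indeed Nyldon (immediate, since they were already assumed Nyldon). There is no real obstacle here: all the heavy lifting was done in Section~\ref{sec:suffixes}, and in particular Theorem~\ref{thm:longest-Nyldon-suffix} is precisely the ``synchronizing'' statement that makes the right-to-left peeling canonical. I would therefore expect the proof to be short, essentially three or four lines of induction, with a pointer back to Proposition~\ref{prop:existence} for the existence half and to Theorem~\ref{thm:longest-Nyldon-suffix} for the uniqueness of the last block.

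Finally I would close by remarking, as the surrounding text already anticipates, that Theorem~\ref{thm:unicity} says exactly that $\Nyl$ is a complete factorization of the free monoid $A^*$ in the sense of Schützenberger with respect to the nondecreasing lexicographic order, and that via the counting argument of Section~\ref{sec:counting} it settles Grinberg's first two questions at once.
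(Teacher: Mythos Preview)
Your proof is correct and follows essentially the same approach as the paper: existence via Proposition~\ref{prop:existence}, uniqueness by induction on $|w|$ using Theorem~\ref{thm:longest-Nyldon-suffix} to force the last factor of any Nyldon factorization to be the longest Nyldon suffix, then applying the induction hypothesis to the remaining prefix. The only cosmetic difference is that the paper separates out the case where $w$ is Nyldon (so its unique factorization is $(w)$), whereas you handle this uniformly via the observation that the longest Nyldon suffix of a Nyldon word is the word itself.
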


\begin{proof}
The existence of the Nyldon factorization is known from Proposition~\ref{prop:existence}. Let us show that the unicity follows from Theorem~\ref{thm:longest-Nyldon-suffix}. We proceed by induction on the length of the words. If $|w|\le1$, then the result is clear. Now suppose that $|w|\ge 2$ and that every finite word shorter than $w$ admits a unique Nyldon factorization. If $w$ is Nyldon, its only possible factorization is $(w)$. Suppose now that $w$ is not Nyldon and let $(n_1,\ldots,n_k)$ be a Nyldon factorization of $w$. Then $k\ge 2$ and $n_k$ is  the longest Nyldon suffix of $w$ by Theorem~\ref{thm:longest-Nyldon-suffix}. Since $(n_1,\ldots,n_{k-1})$ is a Nyldon factorization of a word shorter than $w$, the factors $n_1,\ldots,n_{k-1}$ are also completely determined by $w$ by induction hypothesis. Therefore, there cannot be another Nyldon factorization of $w$ than $(n_1,\ldots,n_k)$.
\end{proof}

From now on, thanks to Theorem~\ref{thm:unicity}, we will talk about {\em the} Nyldon factorization of a finite word $w$ instead of {\em a} Nyldon factorization (see Definition~\ref{def:Nyldon-recursive}) to refer to the unique sequence $(n_1,\ldots,n_k)$ of Nyldon words such that $w=n_1\cdots n_k$ and $n_1\le_{\lex}\cdots \le_{\lex}n_k$.

As mentioned in Section~\ref{sec:counting}, Theorem~\ref{thm:unicity} implies that there are equally many Lyndon and Nyldon words of each length. This result, however, does not provide us with a natural bijection between Lyndon and Nyldon words of the same length. Such a bijection will be given by Theorem~\ref{thm:conjugacy} below in Section~\ref{sec:complete-fac}. Although Table~\ref{table:counting-Nyldon} describes a length-preserving bijection between non-Lyndon and non-Nyldon words, it is not fully satisfying, in the sense that we need to precompute all Lyndon and Nyldon words up to length $n-1$ in order to compute the image of a non-Lyndon word of length $n$ under this bijection.

\section{A faster algorithm for computing the Nyldon factorization}
\label{sec:computing-Nyldon-fac}
The recursive definition of the Nyldon words comes with a natural algorithm for generating the Nyldon words up to any given length. Unfortunately, the complexity of this first algorithm is clearly prohibitive. In this section, we show how to rapidly compute the Nyldon factorization of a word, and in turn, rapidly generate the Nyldon words up to any given length.

\begin{proposition}
\label{prop:longest-Nyldon-suffix}
Let $w\in A^+$, $w=ps$ where $s$ is the longest Nyldon proper suffix of $w$ and let $(p_1,\ldots,p_k)$ be the Nyldon factorization of $p$. Then w is Nyldon if and only if $p_k>_{\lex}s$.
\end{proposition}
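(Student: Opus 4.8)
The plan is to prove both implications by relating Nyldon factorizations of $w=ps$ to Nyldon factorizations of $p$ via the suffix $s$. The key structural input is Theorem~\ref{thm:longest-Nyldon-suffix}: the last factor of any Nyldon factorization of $w$ is the longest Nyldon suffix of $w$. So let $s$ be the longest Nyldon \emph{proper} suffix of $w$, write $w=ps$, and let $(p_1,\ldots,p_k)$ be the (unique, by Theorem~\ref{thm:unicity}) Nyldon factorization of $p$.

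First I would handle the ``if'' direction: suppose $p_k>_{\lex}s$, and show $w$ is Nyldon. Assume for contradiction that $w$ is not Nyldon, so it has a Nyldon factorization $(n_1,\ldots,n_\ell)$ with $\ell\ge 2$. By Theorem~\ref{thm:longest-Nyldon-suffix}, $n_\ell$ is the longest Nyldon suffix of $w$; since $s$ is a Nyldon proper suffix we get $|n_\ell|\ge|s|$, but $s$ is the \emph{longest} Nyldon proper suffix, so either $n_\ell=w$ (impossible as $\ell\ge2$) or $n_\ell=s$. Hence $n_\ell=s$ and $(n_1,\ldots,n_{\ell-1})$ is a Nyldon factorization of $p$; by unicity it equals $(p_1,\ldots,p_k)$, so $\ell-1=k$ and $n_{\ell-1}=p_k$. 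But then $p_k=n_{\ell-1}\le_{\lex}n_\ell=s$, contradicting $p_k>_{\lex}s$. Therefore $w$ is Nyldon.

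For the ``only if'' direction I would prove the contrapositive: assume $p_k\le_{\lex}s$ and show $w$ is not Nyldon. It suffices to exhibit a Nyldon factorization of $w$ of length $\ge2$. The natural candidate is $(p_1,\ldots,p_k,s)$: all factors are Nyldon by hypothesis, and we need the nondecreasing condition $p_1\le_{\lex}\cdots\le_{\lex}p_k\le_{\lex}s$, which holds since $(p_1,\ldots,p_k)$ is already a Nyldon factorization and $p_k\le_{\lex}s$ by assumption. The only subtlety is that we need $k\ge1$, i.e.\ $p\ne\varepsilon$: this holds because $s$ is a \emph{proper} suffix of $w$. Hence $(p_1,\ldots,p_k,s)$ is a Nyldon factorization of $w$ of length $k+1\ge2$, so $w$ is not Nyldon.

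The main thing to be careful about — the only real obstacle — is the degenerate bookkeeping in the ``if'' direction: making sure that the comparison of $n_\ell$ with $s$ forces equality rather than just $|n_\ell|\ge|s|$ (this uses that $s$ is the \emph{longest} proper Nyldon suffix and that $n_\ell$ is a proper suffix since $\ell\ge2$), and then correctly invoking the unicity of the Nyldon factorization of $p$ to match up $n_{\ell-1}$ with $p_k$. Everything else is a direct application of Theorems~\ref{thm:longest-Nyldon-suffix} and~\ref{thm:unicity} together with the definition of a Nyldon factorization.
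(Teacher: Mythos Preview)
Your proof is correct and follows essentially the same approach as the paper: both directions are handled by combining Theorem~\ref{thm:longest-Nyldon-suffix} (to identify the last factor of the Nyldon factorization of $w$ with $s$ when $w$ is not Nyldon) and Theorem~\ref{thm:unicity} (to match the remaining factors with $(p_1,\ldots,p_k)$), together with the observation that $(p_1,\ldots,p_k,s)$ is a Nyldon factorization of length $\ge 2$ when $p_k\le_{\lex}s$. The only cosmetic difference is that the paper packages both implications as the single contrapositive statement ``$w$ is not Nyldon if and only if $p_k\le_{\lex}s$''.
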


\begin{proof}
Let us show, equivalently, that $w$ is not Nyldon if and only if $p_k\le_{\lex}s$. If $p_k\le_{\lex}s$, then  $w$ is not Nyldon since its Nyldon factorization is $(p_1,\ldots,p_k,s)$, which is of length at least $2$. In order to prove the other direction, we now suppose that $w$ is not Nyldon. Let $(w_1,\ldots,w_\ell)$ be the Nyldon factorization of $w$. Then $\ell\ge2$. By Theorem~\ref{thm:longest-Nyldon-suffix}, we obtain that $w_\ell=s$. But then $(p_1,\ldots,p_k)$ and $(w_1,\ldots,w_{\ell-1})$ are both Nyldon factorizations of $p$. By unicity of the Nyldon factorization, we must have $p_k=w_{\ell-1}$.  Since $w_{\ell-1}\le_{\lex}w_{\ell}$, we obtain $p_k=w_{\ell-1}\le_{\lex}w_{\ell}=s$, as desired.
\end{proof}

The previous result combined with Theorem~\ref{thm:longest-Nyldon-suffix} provides us with an algorithm that rapidly computes the Nyldon factorization of a nonempty finite word. In Algorithm~\ref{algo:fac}, $w[i]$ denotes the $i$th letter of $w$ and $w[i,j]$ denotes the factor $w[i]\cdots w[j]$ of $w$. Further, $\NylF(i)$ designates the $i$th element of the list $\NylF$ while $\NylF(-1)$ denotes its last element.
\begin{algorithm}
\caption{Compute the Nyldon factorization.}
\begin{algorithmic}
\REQUIRE $w\in A^+$
\ENSURE NylF is the Nyldon factorization of $w$
\STATE  $n\leftarrow |w|$, $\NylF \leftarrow (w[n])$
\FOR{$i=1$ to $n-1$}
\STATE $\NylF \leftarrow (w[n-i],\NylF)$
\WHILE{$\length( \NylF) \ge 2  \andrm \NylF(1)>_{\lex}\NylF(2)$}
\STATE $\NylF\leftarrow (\NylF(1)\cdot \NylF(2),\,\NylF(3),\, \ldots,\,\NylF(-1))$
\ENDWHILE
\ENDFOR
\RETURN NylF
\end{algorithmic}
\label{algo:fac}
\end{algorithm}

\begin{proposition}
Algorithm~\ref{algo:fac} halts for every input $w\in A^+$ and outputs the Nyldon factorization of $w$. Moreover, the worst case complexity of Algorithm~\ref{algo:fac} is $O\big(\frac{n(n-1)}{2}\big)$ where $n$ is the length of the input $w$. 
\end{proposition}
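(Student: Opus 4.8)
The claim has three parts: termination, correctness, and the $O(n(n-1)/2)$ worst-case bound. The plan is to treat them in that order, using a loop invariant as the backbone. The key invariant to maintain is: after processing index $i$ (i.e.\ after the $i$th iteration of the \textbf{for} loop, and after the inner \textbf{while} loop has exited), the list $\NylF$ is exactly the Nyldon factorization of the suffix $w[n-i,n]$. The base case ($\NylF=(w[n])$, the Nyldon factorization of the one-letter suffix) is immediate since letters are Nyldon. For termination of the inner \textbf{while} loop, I would note that each pass strictly decreases $\length(\NylF)$, which is a positive integer, so the loop halts; the \textbf{for} loop runs a fixed $n-1$ times, so the algorithm halts on every input.

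\textbf{Correctness / invariant step.} Assume the invariant holds after iteration $i-1$, so $\NylF=(m_1,\ldots,m_r)$ is the Nyldon factorization of $w[n-i+1,n]$, meaning $m_1\le_{\lex}\cdots\le_{\lex}m_r$ and each $m_j\in\Nyl$. Prepending the letter $w[n-i]$ gives the list $(w[n-i],m_1,\ldots,m_r)$, whose concatenation is $w[n-i,n]$, and whose entries are all Nyldon. The only way this fails to be a Nyldon factorization is a lexicographic descent at the front, i.e.\ $w[n-i]>_{\lex}m_1$ (note no descent can exist further right, by the inductive hypothesis on the tail). Here is where I invoke the earlier results: the inner \textbf{while} loop repeatedly replaces a leading descending pair $(\NylF(1),\NylF(2))$ by its concatenation. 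I must argue two things. First, that whenever $\NylF=(u_1,u_2,\ldots,u_q)$ with $u_1>_{\lex}u_2$ and $u_2\le_{\lex}\cdots\le_{\lex}u_q$ and all $u_j$ Nyldon, the word $u_1u_2$ is itself Nyldon: this is exactly Proposition~\ref{prop:longest-Nyldon-suffix} applied to $u_1u_2$, once I check that $u_2$ is the longest Nyldon proper suffix of $u_1u_2$ and that $(u_1)$ is the Nyldon factorization of its prefix $u_1$. The "longest Nyldon proper suffix" claim follows from Theorem~\ref{thm:longest-Nyldon-suffix} applied to the word $u_1u_2$: its Nyldon factorization has last block equal to the longest Nyldon suffix, and since $(u_1,u_2)$ would be the Nyldon factorization if $u_1\le_{\lex}u_2$ — but it is not — one has to be slightly careful and instead argue directly that any Nyldon proper suffix $t$ of $u_1u_2$ satisfies $t\le_{\lex}u_2$: a Nyldon proper suffix either sits inside $u_2$ (hence $\le_{\lex}u_2$ by Theorem~\ref{thm:suffix}) or properly overhangs $u_1$, and in the latter case its own Nyldon factorization's last block is a proper Nyldon suffix of $u_1$, which is $<_{\lex}u_1$ by Theorem~\ref{thm:suffix}, allowing one to split off a Nyldon factorization of that overhanging suffix of length $\ge 2$ — contradicting its being Nyldon. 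So $u_2$ is the longest, and then Proposition~\ref{prop:longest-Nyldon-suffix} with $p=u_1$, $s=u_2$, $p_k=u_1>_{\lex}s$ gives $u_1u_2\in\Nyl$. Second, I need that after merging, $u_1u_2\le_{\lex}u_3$ whenever $u_2\le_{\lex}u_3$ — but this is false in general, which is precisely why the \textbf{while} loop must keep running: the merge may create a new leading descent $u_1u_2>_{\lex}u_3$, which the next pass handles. So the correct statement is that the \textbf{while} loop exits exactly when $\NylF$ has no leading descent, at which point all its entries are Nyldon and nondecreasing, hence it is a Nyldon factorization of $w[n-i,n]$; by Theorem~\ref{thm:unicity} it is \emph{the} Nyldon factorization, closing the induction. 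After $i=n-1$ the output is the Nyldon factorization of $w[1,n]=w$.

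\textbf{Complexity step.} I would count the total number of iterations of the inner \textbf{while} loop over the whole run, since each such iteration (a concatenation plus a lexicographic comparison of two already-computed factors) is the dominant unit cost. Introduce the potential $\Phi=\length(\NylF)$. Each pass of the \textbf{while} loop decreases $\Phi$ by exactly $1$; each iteration of the \textbf{for} loop increases $\Phi$ by exactly $1$ (the prepend). Since $\Phi$ starts at $1$ after initialization and ends at $\length(\text{output})\ge 1$, the number of \textbf{while}-passes equals $(\text{number of for-iterations})-(\text{final}-\text{initial }\Phi)\le n-1$. Thus there are at most $n-1$ merge operations in total, but each merge/comparison can cost up to $O(n)$ in the worst case (comparing two long factors), giving the stated $O\!\big(\tfrac{n(n-1)}{2}\big)$ bound after summing; a worst-case example such as $w={\tt 0}^{n-1}{\tt 1}$ (or its appropriate analogue), where each step triggers a cascade over long factors, shows the bound is attained up to constants.

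\textbf{Main obstacle.} The delicate point is the merge-correctness lemma: showing that collapsing a leading descending pair of consecutive Nyldon factors again yields a Nyldon word, and doing so without circularity — Proposition~\ref{prop:longest-Nyldon-suffix} needs the longest Nyldon proper suffix, so one must independently establish that the right member of the pair \emph{is} that longest suffix. This is where Theorems~\ref{thm:suffix} and~\ref{thm:longest-Nyldon-suffix} do the real work, and stating the invariant precisely enough that the tail of $\NylF$ is always already known to be a (the) Nyldon factorization is what makes the argument go through cleanly.
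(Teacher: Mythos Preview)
Your overall plan---termination by a length-decrease argument, correctness via a loop invariant on the suffix $w[n-i,n]$, and a complexity count---matches the paper's proof, and your potential-function bookkeeping for the total number of \textbf{while}-passes is in fact tighter than the paper's cruder bound of $i$ passes in the $i$th \textbf{for}-step. The termination and complexity pieces are essentially fine.

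There is, however, a genuine gap in your correctness step. You assert the merge lemma in generic form: \emph{if $u_1,u_2\in\Nyl$ with $u_1>_{\lex}u_2$, then $u_1u_2\in\Nyl$}, arguing that $u_2$ must be the longest Nyldon proper suffix of $u_1u_2$. Both the lemma and your argument for it fail. Take $u_1={\tt 10010}$ and $u_2={\tt 1}$: both are Nyldon and $u_1>_{\lex}u_2$, yet $u_1u_2={\tt 100101}$ has Nyldon factorization $({\tt 100},{\tt 101})$, so it is \emph{not} Nyldon; and its longest Nyldon proper suffix is ${\tt 101}$, not $u_2$. Your proposed argument breaks at the overhang case: writing $t=xu_2$ with $x$ a nonempty proper suffix of $u_1$ and $(x_1,\ldots,x_s)$ the Nyldon factorization of $x$, you get $x_s<_{\lex}u_1$ from Theorem~\ref{thm:suffix}, but since $u_1>_{\lex}u_2$ this does \emph{not} yield $x_s\le_{\lex}u_2$, so $(x_1,\ldots,x_s,u_2)$ need not be a Nyldon factorization of $t$ and no contradiction is obtained.

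The fix---and this is exactly what the paper does---is to exploit the specific shape of $\NylF$ inside the loop rather than a generic pair $(u_1,u_2)$. After the invariant, $\NylF(1)\cdot\NylF(2)$ is always of the form $w[n-i]\,m_1\cdots m_{j+1}$, where $(m_1,\ldots,m_r)$ is the Nyldon factorization of $w[n-i+1,n]$ and $\NylF(2)=m_{j+1}$. Because a single letter was prepended, every \emph{proper} suffix of $\NylF(1)\cdot\NylF(2)$ is already a suffix of $m_1\cdots m_{j+1}$; since $(m_1,\ldots,m_{j+1})$ is a Nyldon factorization of that word, Theorem~\ref{thm:longest-Nyldon-suffix} gives directly that $m_{j+1}=\NylF(2)$ is its longest Nyldon suffix, hence the longest Nyldon proper suffix of $\NylF(1)\cdot\NylF(2)$. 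Now Proposition~\ref{prop:longest-Nyldon-suffix} applies cleanly and yields that $\NylF(1)\cdot\NylF(2)$ is Nyldon precisely when $\NylF(1)>_{\lex}\NylF(2)$, which is the \textbf{while} condition. Your own closing paragraph even flags this as the delicate point; the repair is simply to use the single-letter-prepend structure instead of the false generic claim.

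Two minor notes: your telescoping potential argument actually shows at most $n-1$ merges in total, so the stated $O(n(n-1)/2)$ follows under either cost model (constant-time comparisons as the paper assumes, or $O(n)$-time comparisons as you assume). And your proposed worst-case example $w={\tt 0}^{n-1}{\tt 1}$ triggers no merges at all; a cascading example would look more like ${\tt 1}{\tt 0}^{n-1}$.
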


\begin{proof}
Clearly, Algorithm~\ref{algo:fac} halts for every input $w\in A^+$ since it goes exactly $n-1$ times into the for loop and for each  value of $i$ in the $n-1$ steps of the for loop, it goes at most $i$ times into the while loop. Then, since each lexicographic comparison between two words and basic manipulations of lists can be realized in constant time, the worst case complexity of Algorithm~\ref{algo:fac} is $O(1+2+\cdots +n-1)=O\big(\frac{n(n-1)}{2}\big)$. 

Let us now prove that Algorithm~\ref{algo:fac} is correct. We claim that, for each step $i$ of the for loop, if the variable NylF contains the Nyldon factorization of the suffix $w[n-i+1,n]$ of length $i$ of $w$ when it enters the for loop, then it exits with the Nyldon factorization of the suffix $w[n-i,n]$ of length $i+1$ of $w$.
Since NylF is initialized with $(w[n])$, which is the Nyldon factorization of the suffix of length $1$ of $w$, the claim implies that, after the last iteration of the for loop, that is for $i=n-1$, NylF contains the Nyldon factorization of  $w$.

In order to prove the claim, we suppose that $1\le i<n$ and that, before the $i$th step of the for loop, NylF contains the Nyldon factorization of the suffix $w[n-i+1,n]$ of length $i$ of $w$, which we denote by $(u_1,\ldots,u_k)$. First, NylF is updated to $(w[n-i],u_1,\ldots,u_k)$, before entering the while loop. 

By Theorem~\ref{thm:longest-Nyldon-suffix}, we know that, for each $j\in\{1,\ldots,k\}$, $u_j$ is the longest Nyldon suffix of $u_1\cdots u_j$. Then, since $w[n-i]$ is just a letter, $u_j$ is also the  longest Nyldon proper suffix of $w[n-i]u_1\cdots u_j$. From Proposition~\ref{prop:longest-Nyldon-suffix}, we successively obtain  that, for every $j\in\{1,\ldots,k\}$, the word $w[n-i]u_1\cdots u_{j}$ is Nyldon if and only if $w[n-i]u_1\cdots u_{j-1}>_{\lex}u_{j}$. Two cases are now possible. 

Either there is some $j\in\{1,\ldots,k-1\}$ such that 
\begin{align*}
	w[n-i] 						& >_{\lex}u_1	\\  
	w[n-i]u_1					& >_{\lex}u_2	\\
								& \vdots 		\\ 
	w[n-i]u_1u_2\cdots u_{j-1} 	& >_{\lex}u_j 	\\
	w[n-i]u_1\cdots u_j 			& \le_{\lex}u_{j+1},
\end{align*} 
in which case the words $w[n-i]u_1\cdots u_{j'}$ are Nyldon for all $j'\in\{1,\ldots,j\}$ by Proposition~\ref{prop:longest-Nyldon-suffix}. In this case, Algorithm~\ref{algo:fac} tells us  to successively update NylF from $(w[n-i],u_1,\ldots,u_k)$ to 
\[
	(w[n-i]u_1,u_2,\ldots,u_k),\ (w[n-i]u_1u_2,u_3,\ldots,u_k),\, \ldots,  \ (w[n-i]u_1\cdots u_j,u_{j+1},\ldots,u_k).
\] 
The last value of NylF, which is $(w[n-i]u_1\cdots u_j,u_{j+1},\ldots,u_k)$, is thus the Nyldon factorization of the suffix $w[n-i,n]$ of length $i+1$ of $w$. 

The alternative is that we have 
\begin{align*}
	w[n-i]						& >_{\lex}u_1 	\\  
	w[n-i]u_1					& >_{\lex}u_2 	\\
								& \vdots 		\\ 
	w[n-i]u_1u_2\cdots u_{k-1}	& >_{\lex}u_k.
\end{align*} 
In this case, Algorithm~\ref{algo:fac} tells us  to successively update NylF from $(w[n-i],u_1,\ldots,u_k)$ to 
\[
	(w[n-i]u_1,u_2,\ldots,u_k),\ (w[n-i]u_1u_2,u_3,\ldots,u_k),\ldots,  \ (w[n-i]u_1\cdots u_k).
\] 
As before, by Proposition~\ref{prop:longest-Nyldon-suffix}, we obtain that $w[n-i,n]=w[n-i]u_1\cdots u_k$ is Nyldon, hence the last value of NylF corresponds to the Nyldon factorization of the suffix of length $i+1$ of $w$.
\end{proof}

We then obtain an algorithm generating all Nyldon words over any alphabet $A$. Here we suppose that we have access to an  already implemented function TestNyldon which returns True if the word is Nyldon and False otherwise. Such a function is easily implemented by combining Algorithm~\ref{algo:fac} and the fact that a nonempty finite word $w$ is not Nyldon if and only if its Nyldon factorization is of length at least two.

\begin{algorithm}
\caption{Generate the Nyldon words.}
\begin{algorithmic}
\REQUIRE $\ell\ge1$ and an alphabet $A$
\ENSURE Nyl is the list of all Nyldon words over $A$ up to length $\ell$
\STATE  Nyl $\leftarrow A$
\FOR{$i=2$ to $\ell$}
\STATE Testlist $\leftarrow$ list of all words of length $i$ over $A$
\STATE Select the elements in Testlist that pass TestNyldon and join the selected elements to Nyl
\ENDFOR
\RETURN Nyl
\end{algorithmic}
\end{algorithm}

Using the fact that the prefixes of Lyndon words are precisely sesquipowers of Lyndon words, Duval obtained a linear algorithm computing the Lyndon factorization \cite{Duval1983}. Also see Open Problem~\ref{op:sesqui}.

\begin{open}
Obtain a linear or pseudo-linear algorithm computing the Nyldon factorization. 
\end{open}

\section{Standard factorization}
\label{sec:standard-fac}

Another important consequence of Theorem~\ref{thm:unicity} is that it allows us to define a standard factorization of Nyldon words, similarly to the case of Lyndon words. Also see Section~\ref{sec:comparison} where we  more specifically discuss the standard factorization of the Nyldon words in relationship with the standard factorization of Lyndon words.

\begin{theorem}
\label{thm:standard-fac}
Let $w\in A^+$ and let $w=ps$ where $s$ is the longest Nyldon proper suffix of $w$. Then w is Nyldon if and only if $p$ is Nyldon and $p>_{\lex}s$.
\end{theorem}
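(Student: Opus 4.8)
The plan is to prove both implications, leaning on Proposition~\ref{prop:longest-Nyldon-suffix} and Theorem~\ref{thm:longest-Nyldon-suffix}. Write $w = ps$ with $s$ the longest Nyldon proper suffix, and let $(p_1,\ldots,p_k)$ be the Nyldon factorization of $p$. Proposition~\ref{prop:longest-Nyldon-suffix} tells us that $w$ is Nyldon if and only if $p_k >_{\lex} s$. So it suffices to show that the condition ``$p$ is Nyldon and $p >_{\lex} s$'' is equivalent to ``$p_k >_{\lex} s$,'' granting that $w$ is Nyldon in those cases (or more precisely, proving the equivalence directly through Proposition~\ref{prop:longest-Nyldon-suffix}).

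For the forward direction, assume $w$ is Nyldon. Then $p_k >_{\lex} s$ by Proposition~\ref{prop:longest-Nyldon-suffix}. I must upgrade this to: $p$ itself is Nyldon (i.e.\ $k=1$) and $p >_{\lex} s$. Suppose toward a contradiction that $k \ge 2$. Since $p_1 \le_{\lex} \cdots \le_{\lex} p_k$ is a Nyldon factorization and $p_k >_{\lex} s$, consider the word $p_k s$. If $p_k s$ were Nyldon, then $(p_1,\ldots,p_{k-1},p_k s)$ would be a Nyldon factorization of $w$ of length $k \ge 2$, contradicting that $w$ is Nyldon. So $p_k s$ is not Nyldon; let $(n_1,\ldots,n_\ell)$ be its Nyldon factorization, $\ell \ge 2$. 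As in the proof of Theorem~\ref{thm:suffix}, since $p_k$ and $s$ are Nyldon and $p_k >_{\lex} s$, the cut between $p_k$ and $s$ must fall strictly inside some factor $n_i$ (it cannot fall on a boundary, else $p_k = n_1$, $s = n_2$, forcing $p_k \le_{\lex} s$); moreover $i \le \ell - 1$ by the maximality of $|s|$ together with the induction-type argument that a longer Nyldon suffix $n_\ell = xs$ of $w$ with $n_\ell >_{\lex} s \ge_{\lex} w$ is impossible. Then writing the suffix part of $n_i$ and a Nyldon factorization of it, its last factor $y_t$ satisfies $y_t <_{\lex} n_i \le_{\lex} n_{i+1}$ by Theorem~\ref{thm:suffix}, producing a Nyldon factorization of $s$ of length $\ge 2$ — contradicting that $s$ is Nyldon. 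Hence $k = 1$, so $p = p_k$ is Nyldon and $p = p_k >_{\lex} s$.

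For the backward direction, assume $p$ is Nyldon and $p >_{\lex} s$. Then the Nyldon factorization of $p$ is just $(p)$, so $p_k = p >_{\lex} s$, and Proposition~\ref{prop:longest-Nyldon-suffix} immediately gives that $w$ is Nyldon. This direction is short; the substance is entirely in the forward direction.

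The main obstacle is the forward direction, and specifically ruling out $k \ge 2$: this requires essentially re-running the combinatorial overlap argument from the proof of Theorem~\ref{thm:suffix} (the cut inside $n_i$, the bound $i \le \ell-1$, and the construction of a short Nyldon factorization of $s$). I expect this to go through verbatim, since the hypotheses ``$s$ longest Nyldon proper suffix,'' ``$s$ Nyldon,'' and ``$p_k >_{\lex} s$'' are exactly what that argument consumed; the only care needed is to note that here $w$ need not a priori satisfy $s \ge_{\lex} w$, but since $w$ is assumed Nyldon, Theorem~\ref{thm:suffix} already gives $s <_{\lex} w$, which is consistent and does not obstruct the overlap argument. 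Alternatively, one could try to quote Theorem~\ref{thm:longest-Nyldon-suffix} more directly to shortcut the re-derivation, but I expect the cleanest writeup simply invokes the already-established machinery.
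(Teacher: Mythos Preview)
Your backward direction matches the paper exactly. Your forward direction is essentially correct but takes a substantially longer route than the paper, and contains one misstated step.

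The paper's forward argument is a two-line shortcut that you yourself half-suggest and then dismiss. After reaching $k\ge 2$ and $p_k>_{\lex}s$, the paper observes that \emph{$s$ is also the longest Nyldon proper suffix of $p_ks$}: any Nyldon proper suffix of $p_ks$ is a Nyldon proper suffix of $w$ (since $k\ge 2$), hence has length at most $|s|$, and $s$ itself is such a suffix. Now apply Proposition~\ref{prop:longest-Nyldon-suffix} to the word $p_ks$: since the Nyldon factorization of $p_k$ is just $(p_k)$ and $p_k>_{\lex}s$, the proposition gives that $p_ks$ \emph{is} Nyldon. Then $(p_1,\ldots,p_{k-1},p_ks)$ is a Nyldon factorization of $w$ of length $k\ge 2$, contradiction. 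There is no need to re-run the overlap machinery from Theorem~\ref{thm:suffix}; your instinct that Theorem~\ref{thm:longest-Nyldon-suffix}/Proposition~\ref{prop:longest-Nyldon-suffix} ``might shortcut the re-derivation'' was exactly right.

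Regarding your longer route: it does go through, but your justification of $i\le \ell-1$ is misstated. You invoke ``$n_\ell>_{\lex}s\ge_{\lex}w$,'' which is precisely the inequality that does \emph{not} hold here (indeed $s<_{\lex}w$ by Theorem~\ref{thm:suffix}). The correct argument is simpler and needs no lexicographic comparison with $w$ at all: if $i=\ell$ then $n_\ell$ has $s$ as a proper suffix, so $n_\ell$ is a Nyldon proper suffix of $w$ strictly longer than $s$, directly contradicting the maximality of $|s|$. With that fix, your overlap argument is valid, just redundant compared to the paper's one-line application of Proposition~\ref{prop:longest-Nyldon-suffix} to $p_ks$.
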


\begin{proof}
The sufficient condition  directly follows from Proposition~\ref{prop:longest-Nyldon-suffix}. In order to prove the necessary condition, we suppose that $w$ is Nyldon. If $p$ is also Nyldon, then clearly $p>_{\lex}s$ for otherwise the Nyldon factorization of $w$ would be $(p,s)$. So we only have to prove that $p$ is Nyldon. Suppose to the contrary that $p$ is not Nyldon. Then its Nyldon factorization $(p_1,\ldots,p_k)$ is of length $k\ge 2$. Since $w$ is Nyldon, we must have $p_k>_{\lex}s$. Observe that $s$ is also the longest Nyldon proper suffix of $p_k s$. Therefore, we obtain from Proposition~\ref{prop:longest-Nyldon-suffix} that $p_ks$ is Nyldon. But then the Nyldon factorization of $w$ would be $(p_1,\ldots,p_ks)$, which is of length $k\ge2$. This contradicts that $w$ is Nyldon. 
\end{proof}

\begin{definition}
Let $w$ be a Nyldon word over $A$ and let $s$ be its longest Nyldon proper suffix. Then the {\em standard factorization} of $w$ is defined to be the pair $(p,s)$, where $p$ is such that $w=ps$. 
\end{definition}

By Theorem~\ref{thm:standard-fac}, if $(p,s)$ is the standard factorization of a Nyldon word $w$, then $p>_{\lex}s$. Observe that  a Nyldon word $w$ may have several factorizations of the form $(u,v)$ such that 
\begin{equation}
\label{eq:almost-standard}
u,v\in\Nyl \ \text{ and }\ u>_{\lex}v.
\end{equation} 
The standard factorization is thus a distinguished such factorization. 

\begin{example}
The word $w={\tt 1011101}$ is a binary Nyldon word. The two pairs $({\tt 1011},{\tt 101})$ and $({\tt 1011101},{\tt 1})$ provide factorizations of $w$ satisfying~\eqref{eq:almost-standard}. The first one is the standard factorization.
\end{example}

\section{Complete factorizations of the free monoid and primitivity}
\label{sec:complete-fac}

Theorem~\ref{thm:unicity} implies that the class of Nyldon words is a particular case of the complete factorizations of the free monoid $A^*$, as introduced by Schützenberger in \cite{Schutzenberger1965}.

\begin{definition}
\label{def:complete-fac}
Let $F$ be a subset of $A^+$ endowed with some total order $\prec$. 
An {\em $F$-factorization} of a word $w$ over $A$ is a factorization $(f_1, \ldots ,f_k)$ of $w$ into words in $F$ such that $f_1 \succeq \cdots \succeq f_k$. 
Further, such a set $F$ is said to be a \emph{complete factorization  of $A^*$} if each $w\in A^*$ admits a unique $F$-factorization.
\end{definition}

\begin{example}
\label{ex:complete-Lyndon}
Thanks to Theorem~\ref{the:Lyndon-fac}, the Lyndon words over $A$ ordered by $<_{\lex}$ form a complete factorization of $A^*$. Similarly, thanks to Theorem~\ref{thm:unicity}, the Nyldon words over $A$ ordered by $>_{\lex}$ also form a complete factorization of $A^*$.
\end{example}

In \cite{Schutzenberger1965},  Schützenberger proved a beautiful and deep theorem about factorizations of the free monoid. For our needs, we only state here a particular case of Schützenberger’s result.

\begin{theorem}[Schützenberger]
\label{thm:Schutzenberger65}
Let $F$ be a subset of $A^+$ endowed with some total order $\prec$. Then any two of the following three conditions imply the third.
\begin{itemize}
	\item[(i)] Each word over $A$ admits at least one $F$-factorization.
	\item[(ii)] Each word over $A$ admits at most one $F$-factorization.
	\item[(iii)] All elements of $F$ are primitive and each primitive conjugacy class of $A^+$ contains exactly one element of $F$.
\end{itemize}
\end{theorem}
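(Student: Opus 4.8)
The plan is to recall that this is a classical result of Schützenberger, so rather than reprove it from scratch I would either cite \cite{Schutzenberger1965} (or a modern reference such as \cite{Berstel-Perrin-Reutenauer2010,Lothaire1997,Reutenauer1993}) or, if a self-contained argument is wanted, to prove the two implications we actually need in this paper, namely $(i)\wedge(ii)\Rightarrow(iii)$, since we already know from Proposition~\ref{prop:existence} and Theorem~\ref{thm:unicity} that the Nyldon words satisfy $(i)$ and $(ii)$. The third implication $(i)\wedge(iii)\Rightarrow(ii)$ and $(ii)\wedge(iii)\Rightarrow(i)$ are not needed for the sequel, but for completeness one can remark that they follow by a symmetric counting/length argument.

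For the implication $(i)\wedge(ii)\Rightarrow(iii)$, here is the route I would take. First I would set up the generating-function (or formal power series) bookkeeping in the noncommutative ring $\mathbb{Z}\langle\langle A\rangle\rangle$: condition $(i)\wedge(ii)$ says precisely that the map sending a nonincreasing sequence $(f_1\succeq\cdots\succeq f_k)$ of elements of $F$ to the word $f_1\cdots f_k$ is a bijection onto $A^*$. Enumerating $F=\{g_1,g_2,\ldots\}$ with $g_1\succ g_2\succ\cdots$ (here using that each length class of $A^+$ is finite, so $\prec$ restricted to words of bounded length is a well-order of finite type — one must be a little careful, but the standard trick is to work length-by-length), this bijection translates into the identity
\[
\underline{A^*}=\sum_{w\in A^*} w=\prod_{i\ge 1}\frac{1}{1-\underline{g_i}}=\prod_{g\in F}(1-\underline{g})^{-1},
\]
where $\underline{X}=\sum_{x\in X}x$ and the infinite product is taken in decreasing order of $\prec$; convergence is in the sense of formal power series graded by length. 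On the other hand, the classical identity for the free monoid reads $\underline{A^*}=(1-\underline{A})^{-1}$, so we get $\prod_{g\in F}(1-\underline{g})^{-1}=(1-\underline{A})^{-1}$, i.e. $\prod(1-\underline{g})=1-\underline{A}$ (again, ordered product).

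From the relation $1-\underline{A}=\prod_{g\in F}(1-\underline{g})$ I would extract the conjugacy statement by taking (formal) logarithmic derivatives, or more elementarily by looking at the trace / cyclic part: applying the standard "abelianization up to conjugacy" functional — concretely, comparing the coefficient identities after passing to the quotient by the relation $uv\sim vu$, which turns $-\log(1-\underline{A})$ into $\sum_{n\ge1}\frac{1}{n}\sum_{a_1,\ldots,a_n}[a_1\cdots a_n]$, the sum over necklaces — one reads off that for each primitive conjugacy class $C$, the total "$\log$-weight" contributed must be $1/\mathrm{(period)}$, forcing each $g\in F$ to be primitive (a non-primitive $g=u^d$ would contribute the wrong cyclic weight) and forcing each primitive class to be hit exactly once. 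The cleanest packaging of this last step is the one in Reutenauer's book via the identity in the free Lie algebra / the PBW-type factorization $(1-\underline{A})^{-1}=\prod_\ell(1-\underline\ell)^{-1}$ characterizing Lazard/Hall sets; I would mirror that argument, or simply invoke it.

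The main obstacle I expect is not any single computation but the bookkeeping needed to make the infinite ordered product rigorous: one must check that $\prec$ restricted to each finite set $A^{\le n}$ is a total order so that "decreasing product" makes sense, that the product converges in the length-graded topology, and that rearrangement is legitimate only because within a fixed length all relevant factors commute in the abelianized/cyclic quotient. Because all of this is carried out carefully in \cite{Schutzenberger1965} and reproved in \cite{Berstel-Perrin-Reutenauer2010,Lothaire1997,Reutenauer1993}, my actual recommendation for the paper is to state Theorem~\ref{thm:Schutzenberger65} with a citation and only sketch the power-series heuristic above in a remark, then immediately apply it with $F=\Nyl$ and $\prec\,=\,>_{\lex}$: conditions $(i)$ and $(ii)$ hold by Proposition~\ref{prop:existence} and Theorem~\ref{thm:unicity}, hence $(iii)$ holds, which gives the primitivity of Nyldon words and the existence of a unique Nyldon word in each primitive conjugacy class — exactly Grinberg's third question.
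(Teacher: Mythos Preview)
Your recommendation matches exactly what the paper does: Theorem~\ref{thm:Schutzenberger65} is stated without proof and attributed to Sch\"utzenberger via the citation \cite{Schutzenberger1965}, and is then immediately applied with $F=\Nyl$ and $\prec\,=\,>_{\lex}$ to deduce Theorem~\ref{thm:conjugacy}. The paper does not include even the power-series heuristic you sketch, so your proposal is if anything slightly more detailed than the paper's treatment, but the approach is the same.
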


The following result is a consequence of both Theorems~\ref{thm:unicity} and~\ref{thm:Schutzenberger65}. It positively answers the third question raised by Grinberg and mentioned in the introduction.

\begin{theorem}
\label{thm:conjugacy}
All Nyldon words are primitive and every primitive word admits exactly one Nyldon word in its conjugacy class.
\end{theorem}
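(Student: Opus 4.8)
The plan is to deduce Theorem~\ref{thm:conjugacy} directly from Schützenberger's theorem (Theorem~\ref{thm:Schutzenberger65}) applied to the set $F=\Nyl$ equipped with the total order $\prec\ =\ >_{\lex}$, exactly as suggested by Example~\ref{ex:complete-Lyndon}. With this choice of order, an $F$-factorization of a word $w$ in the sense of Definition~\ref{def:complete-fac} is a factorization $(n_1,\ldots,n_k)$ into Nyldon words with $n_1 \succeq \cdots \succeq n_k$, i.e.\ $n_1 \le_{\lex} \cdots \le_{\lex} n_k$; in other words, an $F$-factorization is precisely a Nyldon factorization as defined in Definition~\ref{def:Nyldon-recursive}.

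The key steps are then as follows. First, I would verify condition~(i): every word over $A$ admits at least one $F$-factorization. This is exactly Proposition~\ref{prop:existence}. Second, I would verify condition~(ii): every word over $A$ admits at most one $F$-factorization. This is exactly the unicity part of Theorem~\ref{thm:unicity}. Since both (i) and (ii) hold, Theorem~\ref{thm:Schutzenberger65} yields condition~(iii): all elements of $F=\Nyl$ are primitive, and each primitive conjugacy class of $A^+$ contains exactly one element of $\Nyl$. Translating (iii) back into plain language gives precisely the statement of Theorem~\ref{thm:conjugacy}: Nyldon words are primitive, and every primitive word has exactly one Nyldon conjugate.

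There is essentially no obstacle here, since all the real work has already been done: the existence of Nyldon factorizations (Proposition~\ref{prop:existence}), their unicity (Theorem~\ref{thm:unicity}, itself resting on the suffix results of Section~\ref{sec:suffixes}), and Schützenberger's structural theorem (Theorem~\ref{thm:Schutzenberger65}). The only point requiring a moment's care is the bookkeeping on the order: one must observe that reversing the lexicographic order turns the ``nondecreasing'' requirement in the definition of a Nyldon factorization into the ``nonincreasing'' requirement $f_1\succeq\cdots\succeq f_k$ of Definition~\ref{def:complete-fac}, so that the two notions of factorization genuinely coincide. Once this identification is made, the proof is a one-line invocation of Theorem~\ref{thm:Schutzenberger65} with hypotheses~(i) and~(ii) supplied by Proposition~\ref{prop:existence} and Theorem~\ref{thm:unicity} respectively. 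A brief remark could also note that the ``exactly one element of $F$ in each primitive conjugacy class'' part of~(iii) is what supplies, for the first time, a canonical length-preserving bijection between Lyndon and Nyldon words: send a Lyndon word to the unique Nyldon word in its conjugacy class.
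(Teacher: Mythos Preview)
Your proposal is correct and matches the paper's proof essentially verbatim: the paper also takes $F=\Nyl$ with $\prec\,=\,>_{\lex}$, invokes Proposition~\ref{prop:existence} for condition~(i) and Theorem~\ref{thm:unicity} for condition~(ii), and then reads off condition~(iii) from Theorem~\ref{thm:Schutzenberger65}. Your additional remarks about the order bookkeeping and the induced Lyndon--Nyldon bijection are also exactly what the paper says around this proof.
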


\begin{proof}
The first two conditions of Theorem~\ref{thm:Schutzenberger65} are satisfied by Proposition~\ref{prop:existence} and Theorem~\ref{thm:unicity} if we choose $F$ to be the set $\Nyl$ of Nyldon words and the order $\prec$ to be $>_{\lex}$. Consequently, all Nyldon words are primitive and every primitive word admits exactly one Nyldon word in its conjugacy class by Theorem~\ref{thm:Schutzenberger65}.
\end{proof}

A natural length-preserving bijection from Nyldon words to Lyndon words is now easy to describe: with each Nyldon word, we associate the unique Lyndon word in its conjugacy class. The Lyndon conjugate of a primitive word is easy to compute since it is lexicographically minimal  among all conjugates. However, the reciprocal map which associates with a Lyndon word  its unique Nyldon conjugate is much more difficult to understand. An effective construction of the Nyldon conjugate of any primitive word will be given by Algorithm~\ref{algo:conjugate} in Section~\ref{sec:Melancon}.

\section{Nyldon words versus Lyndon words}
\label{sec:comparison}

Lyndon words have many strong properties \cite{Duval1983,Lothaire1997}. Some of them have analogues in terms of Nyldon words, while some of them (many in fact) do not.  This section is dedicated to an in-depth comparison between Lyndon and Nyldon words. On the one hand, whenever the Nyldon and Lyndon words share some property, then we show that this property in the Lyndon case can also be obtained by only using their recursive definition. This is the case of Theorem~\ref{thm:Lyndon-suffix} and Proposition~\ref{prop:Lyndon-primitive}. Interestingly, these new proofs of classical properties of Lyndon words are surprisingly more complicated than their analogues for Nyldon words. This highlights that the inversion of the order is not only cosmetic, but really twists the main arguments of the proofs. On the other hand, whenever some property of the Lyndon words does not stand in the Nyldon case, then we provide explicit counter-examples.

\subsection{Sticking to the recursive definition}

The following property  of Lyndon words (Theorem~\ref{thm:Lyndon-suffix}) is well known. It is the analogue of Theorem~\ref{thm:suffix} for Nyldon words. The usual proof of this result uses the classical definition of Lyndon words: they are primitive and lexicographically minimal among their conjugates. We propose a new proof only using the recursive definition of the Lyndon words. In particular, this also provides us with another proof of Theorem~\ref{the:Lyndon-fac}, in the same vein as we proved the unicity of the Nyldon factorization. 

\begin{theorem}
\label{thm:Lyndon-suffix}
Let $w$ be a Lyndon word over $A$. For each Lyndon proper suffix $s$ of $w$, we have $s >_{\lex} w$.
\end{theorem}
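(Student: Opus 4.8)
The plan is to mimic, but with reversed inequalities, the proof of Theorem~\ref{thm:suffix} that we just carried out for Nyldon words, using the recursive definition of Lyndon words from Corollary~\ref{cor:LisL} rather than the "minimal conjugate" characterization. So I would argue by induction on $|w|$. The base cases $|w|=1$ and $|w|=2$ are immediate: if $w={\tt ij}$ is Lyndon then ${\tt i<j}$ by the recursive definition (a word ${\tt ij}$ with ${\tt i\ge j}$ factors as $({\tt i},{\tt j})$ nonincreasingly), and the only proper suffix ${\tt j}$ satisfies ${\tt j>i}$, hence ${\tt j>_{\lex}ij}=w$ because $w$ begins with ${\tt i}$.

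For the inductive step, assume $|w|\ge 3$ and the statement holds for all shorter Lyndon words. Suppose for contradiction that $w$ has a Lyndon proper suffix $s$ with $s\le_{\lex}w$; among all such suffixes choose $s$ of \emph{minimal} length (the mirror of choosing the longest in the Nyldon proof), and write $w=ps$ with $p\in A^+$. First I would show $p\notin\Lyn$: if $p$ were Lyndon, then $p>_{\lex}s$ would force $(p,s)$ to be a nonincreasing Lyndon factorization of $w$, contradicting that $w$ is Lyndon, so we would need $p\le_{\lex}s$, but then combined with $s\le_{\lex}w$ this is compatible, so actually I must be more careful — the analogue here is that $p$ Lyndon with $p>_{\lex}s$ contradicts Lyndon-ness of $w$, and $p$ Lyndon with $p\le_{\lex}s\le_{\lex}w$ would give $w>_{\lex}p$, i.e. $p<_{\lex}w$, which does not yet contradict anything; so I would instead take the Lyndon factorization $(p_1,\ldots,p_k)$ of $p$ (with $k\ge1$, and argue $k\ge 2$ or handle $p$ Lyndon directly) and show $p_1<_{\lex}s$: otherwise $p_1\ge_{\lex}s$ and then $(p_1,\ldots,p_k,s)$... — here one must check $p_k\ge_{\lex}s$, which needs $p_1\ge_{\lex}\cdots\ge_{\lex}p_k\ge_{\lex}s$. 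So the correct analogue is: from Lyndon-ness of $w$ and $s$, deduce $p_1<_{\lex}s$ (mirroring $p_k>_{\lex}s$), then that $sp_1\notin\Lyn$ (mirroring $p_ks\notin\Lyn$), take a Lyndon factorization $(n_1,\ldots,n_\ell)$ of $sp_1$ with $\ell\ge 2$, locate the split point, and build a Lyndon factorization of $s$ of length $\ge 2$ via the induction hypothesis applied to a prefix rather than a suffix of some $n_i$.

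The key structural difference I expect to be the main obstacle: in the Nyldon proof one works with the \emph{suffix} $p_ks$ and the induction hypothesis controls \emph{suffixes} of Nyldon words, so everything lines up on the same side. When we reverse the order for Lyndon words, the natural object becomes $sp_1$ (a prefix-side concatenation), and we would need an induction hypothesis about \emph{prefixes} of Lyndon words — but prefixes of Lyndon words are \emph{not} in general Lyndon, so the clean recursion breaks. The fix, as the paper warns in Section~\ref{sec:comparison} ("the proofs in the Lyndon case are not simple translations"), is presumably to run the induction on a different quantity or to use a small auxiliary lemma converting between the suffix statement and a prefix statement; I would expect to prove first a companion fact such as "if $w$ is Lyndon and $w=uv$ with $u,v\in A^+$ then the Lyndon factorization of $v$ has first factor $>_{\lex}$ (something)", i.e. reconstruct enough of the standard-factorization machinery on the fly. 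Concretely, I would set up the contradiction so that the induction hypothesis is applied to a Lyndon word $n_i$ shorter than $w$ and a proper \emph{suffix} of it obtained by factoring the tail of $n_i$ that sticks out past $s$, exactly paralleling the "$(y_1,\ldots,y_t)$ Nyldon factorization of $y$" move in the Nyldon proof, and then use $n_{i-1}\ge_{\lex}n_i$ to assemble a nonincreasing Lyndon factorization of $s$ of length $\ge 2$, contradicting $s\in\Lyn$.

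So in outline: (1) induct on $|w|$, dispose of $|w|\le 2$; (2) assume a bad proper Lyndon suffix $s$ exists, pick it of minimal length, write $w=ps$; (3) using Lyndon-ness of $w$, show the first factor $p_1$ of the Lyndon factorization of $p$ satisfies $p_1<_{\lex}s$, and that no concatenation of $s$ with an initial piece of $p$ can be Lyndon; (4) take a Lyndon factorization of the relevant word of length $\ge 2$, find the index $i$ where the boundary between the $s$-part and the $p_1$-part falls strictly inside a factor $n_i=xy$; (5) rule out the boundary being at the extreme end using the induction hypothesis together with minimality of $|s|$; (6) factor the protruding piece, apply the induction hypothesis to $n_i$ (which is shorter than $w$) to get its first Lyndon factor is $>_{\lex}n_i\ge_{\lex}n_{i-1}$, and splice to produce a nonincreasing Lyndon factorization of $s$ of length $\ge 2$, contradicting that $s$ is Lyndon. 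The delicate point throughout is that the reversal forces us to look at the \emph{left} end of things where Lyndon-ness is not inherited, so steps (3)–(5) will require genuinely new bookkeeping rather than a mechanical sign flip.
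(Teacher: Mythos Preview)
Your diagnosis that the Lyndon proof is not a mechanical sign-flip of the Nyldon one is correct, but your proposed remedy heads in the wrong direction and would not close.

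The paper keeps the \emph{same} skeleton as the Nyldon argument: choose $s$ to be the \emph{longest} bad Lyndon proper suffix (not the shortest), write $w=ps$, take the Lyndon factorization $(p_1,\ldots,p_k)$ of $p$, and work with the \emph{suffix} $p_ks$ of $w$ (not with $sp_1$, which is not even a factor of $w$). The final splice---factor $p_ks=\ell_1\cdots\ell_r$, locate the boundary inside some $\ell_i=xy$, Lyndon-factor $y$ as $(y_1,\ldots,y_t)$, apply the induction hypothesis to $\ell_i$ to get $y_t>_{\lex}\ell_i\ge_{\lex}\ell_{i+1}$, and assemble a nonincreasing Lyndon factorization of $s$---is formally identical to the Nyldon proof. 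What genuinely changes are the two intermediate steps where the Nyldon argument was a one-liner:
\begin{itemize}
\item \emph{Showing $p\notin\Lyn$.} You correctly observe that ``$p$ Lyndon and $p\le_{\lex}s\le_{\lex}w$'' is not yet a contradiction. The paper's fix is an iteration: if $p$ were Lyndon then $p<_{\lex}s<_{\lex}w=ps$ forces $p$ to be a prefix of $s$; writing $s=ps_1$ one gets $s_1<_{\lex}s$, so by the induction hypothesis applied to $s$ the suffix $s_1$ is not Lyndon; its Lyndon factorization then forces $p$ to be a prefix of $s_1$ as well, and repeating gives $p^n$ a prefix of $s$ for all $n$, absurd. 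This descent is the main new idea your outline is missing.
\item \emph{Showing $p_ks\notin\Lyn$.} The Nyldon trick (``otherwise $(p_1,\ldots,p_{k-1},p_ks)$ factors $w$'') fails here because $p_{k-1}\ge_{\lex}p_k$ does not give $p_{k-1}\ge_{\lex}p_ks$. Instead the paper uses the \emph{maximality} of $|s|$: if $p_ks$ were Lyndon, then being a longer proper suffix of $w$ it would satisfy $p_ks>_{\lex}w$, and the induction hypothesis on $p_ks$ yields $s>_{\lex}p_ks>_{\lex}w$, contradicting the assumption on $s$. This is precisely why choosing $s$ shortest, as you propose, cannot work.
\end{itemize}
Your pivot to $sp_1$ and to prefix-side reasoning is a dead end: $sp_1$ wraps around the word and factoring it tells you nothing about $w$ or $s$, and the statement (hence the induction hypothesis) speaks only of suffixes, so there is no prefix lemma to call. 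The real ``twist'' stays entirely on the suffix side; it lies in the two replacement arguments above.
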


\begin{proof}
Let us stress that, inside this proof, being Lyndon means belonging to the set $L$ defined in Corollary~\ref{cor:LisL}. In particular, we suppose that we know nothing else about Lyndon words than this recursive definition (such as primitivity, minimality or unicity of the Lyndon factorization).

We proceed by induction on $|w|$. If $|w|=1$, the result is obvious. If $|w|=2$, write $w={\tt ij}$ with ${\tt i,j}\in A$. The only Lyndon proper suffix of $w$ is its last letter ${\tt j}$. Since $w,{\tt i,j}$ are Lyndon, we must have ${\tt j} >_{\lex} {\tt i}$, for otherwise $({\tt i,j})$ would be a Lyndon factorization of $w$ of length at least $2$. Thus, ${\tt j} >_{\lex} w= {\tt ij}$.

Now, we suppose that $|w|\ge 3$ and that the result is true for all Lyndon words shorter than $w$. Proceed by contradiction and assume that there exists a Lyndon proper suffix $s$ of $w$ such that 
\begin{equation}
\label{eq: s<w}
	s <_{\lex} w.
\end{equation}
Note that $s\ne w$ since $s$ is a proper suffix. Among all suffixes of $w$ satisfying~\eqref{eq: s<w}, we choose $s$ to be the longest and we write $w=ps$ with $p\in A^+$. 

First, let us prove that $p$ is not Lyndon. We proceed by contradiction and we suppose that $p$ is Lyndon. Then $p<_{\lex}s$, for otherwise $(p,s)$ would be a Lyndon factorization of $w$ of length $2$. Using~\eqref{eq: s<w}, we obtain $p<_{\lex}s<_{\lex}w=ps$. Therefore $s$ must start with $p$. Write $s=ps_1$ with $s_1\ne\varepsilon$. Now, since $ps_1=s<_{\lex} w=ps$, we obtain that $s_1<_{\lex} s$. Then, since $|s|<|w|$, we obtain by induction hypothesis that $s_1$ is not Lyndon. Therefore, there exists a Lyndon factorization $(\ell_1,\ldots,\ell_k)$ of $s_1$ of length $k\ge2$. Since $s$ is Lyndon, we must have $p<_{\lex}\ell_1$, for otherwise $(p,\ell_1,\ldots,\ell_k)$ would be a Lyndon factorization of $s$ of length at least $2$. But then we have $p<_{\lex}s_1<_{\lex}s=ps_1$. Therefore, $p$ must be a prefix of $s_1$. Hence we obtain that $p^2$ is a prefix of $s$. Again, write $s_1=ps_2$ with $s_2\ne\varepsilon$. Similarly we obtain that $s_2<_{\lex}s_1$, and by using the induction hypothesis, that $s_2$ is not Lyndon. Then, similarly, $p$ must be a prefix of $s_2$, and hence $p^3$ must be a prefix of $s$. Iterating this process, we obtain that $p^n$ must be a prefix of $s$ for any $n$, which is impossible.

Now that we know that $p$ is not Lyndon, there must exist a Lyndon factorization $(p_1, \ldots, p_k)$ of $p$ of length $k\ge 2$. Then we must have $p_k <_{\lex} s$ for otherwise $(p_1, \ldots, p_k,s)$ would be a Lyndon factorization of $w$ of length at least $2$, contradicting that $w$ is Lyndon. 

Let us show that $p_ks$ cannot be Lyndon. Indeed, if $p_ks$ were Lyndon, then by definition of $s$, we would have $p_ks>_{\lex} w$. But then, since $|p_ks|<|w|$, we would get that $s>_{\lex} p_ks$ by the induction hypothesis. Gathering the last two inequalities, we would finally obtain $s>_{\lex} w$, contradicting~\eqref{eq: s<w}.

Now let $(\ell_1, \ldots, \ell_r)$ be a Lyndon factorization of $p_ks$ of length $r\ge2$. As in the proof of Theorem~\ref{thm:unicity}, we can show that there exist $i\in\{1,\ldots, r-1\}$ and $x,y \in A^{+}$ such that 
\[
	\ell_i = xy,\quad p_k=\ell_1 \cdots \ell_{i-1} x \quad\text{ and }\quad s= y \ell_{i+1} \cdots \ell_r.
\]
Now let $(y_1, \ldots, y_t)$ be a Lyndon factorization of $y$. Then $y_t$ is a Lyndon proper suffix of $\ell_i$. Since $\ell_i$ is Lyndon and $|\ell_i| < |w|$, the induction hypothesis yields $y_t >_{\lex} \ell_i$. But then $y_t >_{\lex} \ell_i \ge_{\lex} \ell_{i+1}$, hence $(y_1, \ldots, y_t,\ell_{i+1},\ldots,\ell_r)$ is a Lyndon factorization of $s$ of length at least $2$, a contradiction with the fact that $s$ is Lyndon.
\end{proof}

The converse of Theorem~\ref{thm:Lyndon-suffix} is actually true, hence we can formulate the following theorem. 

\begin{theorem}\label{thm:lex-suffix-Lyndon}
\label{thm:Lyndon-char-suffixes}
Let $w\in A^+$. Then the following assertions are equivalent.
\begin{itemize}
\item[(i)] $w$ is Lyndon.
\item[(ii)] $w$ is lexicographically smaller than all its  nonempty proper suffixes.
\item[(iii)] $w$ is lexicographically smaller than all its Lyndon proper suffixes.
\end{itemize}
\end{theorem}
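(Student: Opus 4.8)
The plan is to prove the cyclic chain of implications $(i)\Rightarrow(ii)\Rightarrow(iii)\Rightarrow(i)$. The implication $(ii)\Rightarrow(iii)$ is trivial since a Lyndon proper suffix is in particular a nonempty proper suffix, so no real work is needed there. The implication $(i)\Rightarrow(ii)$ is the classical statement that a Lyndon word is strictly smaller than each of its nonempty proper suffixes; I would either invoke it as a well-known property (it is an immediate consequence of the definition of a Lyndon word as the lexicographically least among its conjugates, which was recalled right after Definition~\ref{def:Lyndon}), or, in keeping with the ``recursive definition'' spirit of this section, deduce it from Theorem~\ref{thm:Lyndon-suffix} together with Theorem~\ref{the:Lyndon-fac}: if $w$ is Lyndon and $s$ is a nonempty proper suffix, factor $s=\ell_1\cdots\ell_k$ into Lyndon words with $\ell_1\ge_{\lex}\cdots\ge_{\lex}\ell_k$; then $\ell_1$ is a Lyndon proper suffix of $w$, so Theorem~\ref{thm:Lyndon-suffix} gives $\ell_1>_{\lex}w$, and since $s$ begins with $\ell_1$ and $w$ is not a prefix of $\ell_1$ (as $|w|>|\ell_1|$ when $s\neq w$; the case $s=\ell_1$ being direct), we get $s\ge_{\lex}\ell_1>_{\lex}w$, except one must handle the subtlety that $s>_{\lex}w$ could fail only if $w$ were a proper prefix of $s$, which is impossible since $w>_{\lex}\ell_1$ is false — actually one just needs: $s\ge_{\lex}\ell_1>_{\lex}w$ and $w\ne s$ yield $s>_{\lex}w$ unless $w$ is a proper prefix of $s$, but a proper prefix is strictly smaller, contradiction. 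So $(i)\Rightarrow(ii)$ follows cleanly.

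The substantive implication is $(iii)\Rightarrow(i)$, the converse direction asserted in the sentence preceding the theorem. Here I would again argue using only the recursive definition of Lyndon words, by induction on $|w|$. Suppose $w$ is lexicographically smaller than all its Lyndon proper suffixes; I want to show $w\in L$. The base case $|w|=1$ is clear. For the inductive step, suppose toward a contradiction that $w\notin L$, and let $(\ell_1,\ldots,\ell_k)$ be a Lyndon factorization of $w$ with $k\ge 2$ and $\ell_1\ge_{\lex}\cdots\ge_{\lex}\ell_k$. Then $\ell_k$ is a Lyndon proper suffix of $w$, so by hypothesis $w<_{\lex}\ell_k$. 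I would like to compare $w$ with $\ell_k$ directly: since $w=\ell_1\cdots\ell_{k-1}\ell_k$ and $\ell_1\ge_{\lex}\ell_k$, intuitively $w$ should not be smaller than $\ell_k$, and this should be the contradiction. The clean way is this: consider the suffix $\ell_{k-1}\ell_k$. By hypothesis applied to any Lyndon proper suffix, and using Theorem~\ref{thm:Lyndon-suffix} on $\ell_{k-1}$, one shows that $w$ being smaller than its Lyndon suffixes propagates to a statement about how $w$ compares with $\ell_1$; in fact $\ell_1$ is a prefix of $w$, and I claim $w<_{\lex}\ell_1$ forces a proper-prefix situation. Let me instead structure it as: $\ell_1$ is a prefix of $w$; if $w<_{\lex}\ell_1$ then since $\ell_1$ is a prefix of $w$ we'd need... that's impossible ($w<_{\lex}\ell_1$ with $\ell_1$ a prefix of $w$ cannot happen). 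So $w\ge_{\lex}\ell_1\ge_{\lex}\ell_k$, contradicting $w<_{\lex}\ell_k$. Wait — but I have not shown $w<_{\lex}\ell_1$; I only know $w<_{\lex}\ell_k$ and $\ell_1\ge_{\lex}\ell_k$. Since $\ell_1$ is a prefix of $w$, we have $\ell_1\le_{\lex}w$ automatically (a prefix is $\le_{\lex}$ the word), hence $\ell_k\le_{\lex}\ell_1\le_{\lex}w$, directly contradicting $w<_{\lex}\ell_k$. That is the whole argument, and it does not even need the induction hypothesis.

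So the main obstacle is essentially bookkeeping about the lexicographic order on prefixes and suffixes — specifically the repeated use of the facts that a prefix is $\le_{\lex}$ the whole word, that a word cannot be $<_{\lex}$ one of its own prefixes, and that $s\ge_{\lex}\ell_1>_{\lex}w$ together with $w$ not a prefix of $s$ yields $s>_{\lex}w$. I would state these small order lemmas once (or cite that they are standard) and then the three implications fall out in a few lines each. I expect the write-up to be short: a sentence for $(ii)\Rightarrow(iii)$, a short paragraph deducing $(i)\Rightarrow(ii)$ from Theorems~\ref{thm:Lyndon-suffix} and~\ref{the:Lyndon-fac}, and a short paragraph for $(iii)\Rightarrow(i)$ via the prefix/suffix comparison above, with the contradiction $\ell_k\le_{\lex}\ell_1\le_{\lex}w<_{\lex}\ell_k$.
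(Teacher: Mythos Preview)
Your argument for $(iii)\Rightarrow(i)$ is correct and takes a genuinely different route from the paper's. The paper proceeds by induction on $|w|$ and works with the \emph{classical} definition: for each factorization $w=uv$ it shows $w<_{\lex}vu$, splitting on whether $v$ is Lyndon and invoking the induction hypothesis on $v$ when it is not. Your route is direct and uses only the recursive definition: if $w$ admits a nontrivial Lyndon factorization $(\ell_1,\ldots,\ell_k)$ with $k\ge2$, then $\ell_k\le_{\lex}\ell_1<_{\lex}w<_{\lex}\ell_k$ (the middle inequality because $\ell_1$ is a proper prefix of $w$, the last by hypothesis~(iii) applied to the Lyndon proper suffix $\ell_k$), which is absurd. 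No induction is needed. In fact this answers what the paper records as an open problem immediately after the theorem: the authors note that their proof of $(iii)\Rightarrow(i)$ relies on the conjugacy definition of Lyndon words and explicitly ask for a proof using only the recursive definition. The paper's approach does buy a direct verification that $w$ is minimal among its conjugates, but since Corollary~\ref{cor:LisL} already identifies the two definitions, your shorter argument is a strict improvement for the stated goal.

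One small slip in your sketch of $(i)\Rightarrow(ii)$ from Theorems~\ref{thm:Lyndon-suffix} and~\ref{the:Lyndon-fac}: if $(\ell_1,\ldots,\ell_k)$ is the Lyndon factorization of the proper suffix $s$, then $\ell_1$ is a \emph{prefix} of $s$, not a suffix of $w$, so Theorem~\ref{thm:Lyndon-suffix} does not apply to $\ell_1$ directly. Apply it to $\ell_k$ instead (which \emph{is} a Lyndon proper suffix of $w$) to obtain $w<_{\lex}\ell_k\le_{\lex}\ell_1$; since $|\ell_1|<|w|$, this inequality must come from a letter difference within the first $|\ell_1|$ positions, and as $s$ begins with $\ell_1$ the same difference gives $w<_{\lex}s$. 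With that fix your derivation is fine; the paper itself simply cites $(i)\Leftrightarrow(ii)$ as well known.
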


\begin{proof}
It is well known that $(i)$ is equivalent to $(ii)$; see \cite{Lothaire1997} for example. Clearly $(ii)$ implies $(iii)$. We now show that $(iii)$ implies $(i)$, i.e., that the converse of Theorem~\ref{thm:Lyndon-suffix} is true. We proceed by induction on the length of the words. Since the letters are Lyndon, the base case is  trivially verified. Now suppose that $|w|\ge 2$, that $w$ is lexicographically smaller than all its Lyndon proper suffixes and that, for each word $z$ shorter than $w$, if $z$ has the property to be lexicographically smaller than all its Lyndon proper suffixes, then $z$ is Lyndon. Let us write $w=uv$ with $u,v \in A^+$. Our aim is to show that $w <_{\lex} vu$. If $v$ is Lyndon, then $w <_{\lex} v $ by hypothesis, hence $w<_{\lex} vu$. Now suppose that $v$ is not Lyndon. By applying the induction hypothesis to $v$, we obtain that there exists a Lyndon proper suffix $s$ of $v$ such that $v >_{\lex} s$. Since $s$ is also a Lyndon proper suffix of $w$, we also have $w <_{\lex} s$ by hypothesis.  Consequently, we get $w <_{\lex} s <_{\lex} v <_{\lex} vu$.
\end{proof}

Note that the proof that we give for the converse of Theorem~\ref{thm:Lyndon-suffix}, which corresponds to the implication $(iii)\implies(i)$, makes use of the usual definition of Lyndon words.
We do not know how to prove this result by using the recursive definition of Lyndon words only.

\begin{open}
Prove the implication $(iii)\implies(i)$ of Theorem~\ref{thm:Lyndon-char-suffixes} by only using the recursive definition of Lyndon words.
\end{open}

Interestingly, the converse of Theorem~\ref{thm:suffix}, which is the Nyldon analogue of Theorem~\ref{thm:Lyndon-suffix}, is {\em not} true, as illustrated in Example~\ref{ex:no-pro:lex-suffix-Nyldon}. 

\begin{example}
\label{ex:no-pro:lex-suffix-Nyldon}
Take $w={\tt 1011011}$. The Nyldon factorization of $w$ is $({\tt 101,1011})$, hence $w$ is not Nyldon. The only Nyldon proper suffixes of $w$ are ${\tt 1}$ and ${\tt 1011}$, which are both lexicographically smaller than $w$.
\end{example}

\subsection{Standard and {\v S}ir{\v s}ov factorizations}

The following very useful characterization of Lyndon words fails in the case of Nyldon words. For a proof of this result, see for example \cite{Lothaire1997}.

\begin{proposition}
A finite word $w$ over $A$ is Lyndon if and only if $w$ is a letter or there exists a factorization $(u,v)$ of $w$ into Lyndon words such that $u<_{\lex} v$. 
\end{proposition}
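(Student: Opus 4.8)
The statement to prove is the classical characterization: $w$ is Lyndon iff $w$ is a letter or $w=uv$ with $u,v$ Lyndon and $u<_{\lex}v$. The forward direction is immediate from the recursive definition via the standard factorization of a Lyndon word (or even more simply: if $w$ is Lyndon of length $\ge 2$, take its standard factorization $(u,v)$ into Lyndon words; by Theorem~\ref{thm:Lyndon-suffix} applied as in the Lyndon theory, $u<_{\lex}v$), so I will spend most of the effort on the converse. So the plan is: (1) dispatch the ``only if'' direction by invoking the standard factorization of Lyndon words; (2) for the ``if'' direction, assume $w=uv$ with $u,v$ Lyndon and $u<_{\lex}v$, and show $w$ is lexicographically smaller than all its nonempty proper suffixes, hence Lyndon by Theorem~\ref{thm:Lyndon-char-suffixes}.

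\textbf{Key steps for the converse.} Let $w=uv$ with $u,v\in\Lyn$ and $u<_{\lex}v$. First I would record that, since $u$ is Lyndon, $u<_{\lex}vu$ is not quite what I need directly; instead I would aim to compare $w=uv$ with an arbitrary nonempty proper suffix $s$ of $w$. There are two cases according to where $s$ starts. If $s$ is a suffix of $v$ (including $s=v$): by Theorem~\ref{thm:Lyndon-char-suffixes} applied to $v$, every nonempty proper suffix of $v$ is $>_{\lex}v>_{\lex}u$; and $v>_{\lex}u=$ the first letter block means $v$ does not begin with $u$ as a strict issue — more carefully, I claim $w=uv<_{\lex}s$ because either $s$ does not begin with $u$ (and then the first point of difference between $w$ and $s$ already decides it, using that $s\ge_{\lex}v>_{\lex}u$ forces $s$ to be lexicographically $\ge$ something starting above $u$'s first letter, hence $w<_{\lex}s$), or $s$ begins with $u$, say $s=us'$; then $s'$ is a proper suffix of $v$ that is nonempty (length reasons), so $s'>_{\lex}v>_{\lex}\varepsilon$, and comparing $w=uv$ with $s=us'$ reduces to comparing $v$ with $s'$, and I need $v<_{\lex}s'$ — which is false in general! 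So instead I should argue $s$ cannot begin with $u$: if $us'$ is a suffix of $v$ then $us'\le|v|$ in length and $us'>_{\lex}v$, but $v$ is Lyndon so $v$ is the lexicographically least among its conjugates and least among... hmm. The cleaner route: use that $v$ is Lyndon hence $v<_{\lex}(\text{any proper suffix of }v)$; if $s=us'$ is a proper suffix of $v$, then $s<_{\lex}$ no—$s>_{\lex}v$. Combined with $u<_{\lex}v$: $u$ is a prefix of $s$ and $s>_{\lex}v$; since $v$ is Lyndon and $u<_{\lex}v$, $u$ cannot be a prefix of $v$ unless $u$ is a prefix of $v$, in which case writing $v=uv'$ gives $u<_{\lex}uv'$ so fine, but then $v'$ is a suffix of $v$, nonempty, so $v'>_{\lex}v=uv'$, giving $v'>_{\lex}u$... this is getting circular, which signals that the right tool is really Theorem~\ref{thm:Lyndon-char-suffixes}(ii) together with a direct comparison.

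\textbf{Streamlined argument.} Here is the clean version I would write: let $s$ be a nonempty proper suffix of $w=uv$. If $|s|\le|v|$, then $s$ is a suffix of $v$; since $v$ is Lyndon, $v\le_{\lex}s$ with equality only if $s=v$ (using Theorem~\ref{thm:Lyndon-char-suffixes}); hence $s\ge_{\lex}v>_{\lex}u$. Now $w=uv$ starts with $u$, and I claim $w<_{\lex}s$: indeed $u$ is Lyndon and $u<_{\lex}v\le_{\lex}s$, so by the comparison of $u$ with the length-$|u|$ prefix of $s$ (or $s$ itself if shorter) one gets that $s$ exceeds $w$ at the first position of difference — formally, since $u<_{\lex}v\le_{\lex}s$ and $u$ is not a proper prefix of itself, either $s$ has a prefix lexicographically above $u$ (done, $w<_{\lex}s$) or $u$ is a prefix of $s$, say $s=us'$ with $s'$ a nonempty proper suffix of $v$, so $s'>_{\lex}v$ by Theorem~\ref{thm:Lyndon-char-suffixes} applied to $v$, and then $w=uv$ vs $s=us'$ reduces to $v$ vs $s'$, and $v<_{\lex}s'$, so $w<_{\lex}s$. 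If instead $|s|>|v|$, then $s=u'v$ where $u'$ is a nonempty proper suffix of $u$; since $u$ is Lyndon, $u'>_{\lex}u$ by Theorem~\ref{thm:Lyndon-char-suffixes}, so either $u'$ and $u$ differ within the first $\min(|u'|,|u|)$ letters (then $w=uv<_{\lex}u'v=s$ is immediate) or $u$ is a proper prefix of $u'$ — but $u'$ is a proper suffix of $u$, so $|u'|<|u|$, ruling this out; hence $w<_{\lex}s$. In all cases $w<_{\lex}s$, so by Theorem~\ref{thm:Lyndon-char-suffixes}, $w$ is Lyndon.

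\textbf{Main obstacle.} The delicate point is the sub-case where a proper suffix begins with $u$; handling it cleanly requires invoking Theorem~\ref{thm:Lyndon-char-suffixes} for $v$ (every nonempty proper suffix of $v$ is $>_{\lex}v$) to get $v<_{\lex}s'$, which is exactly what makes the reduction $w$ vs $s$ $\leadsto$ $v$ vs $s'$ conclude. I would make sure to state explicitly that $|s|>|v|$ forces the suffix of $u$ to be \emph{proper} (hence strictly shorter than $u$), which is what prevents the ``prefix'' escape in that case. The forward direction, as noted, needs only the existence of the standard factorization of a Lyndon word from Section~\ref{sec:standard-fac}'s Lyndon analogue, or can be taken as folklore; I would cite Theorem~\ref{thm:Lyndon-char-suffixes} to note that a Lyndon $w$ of length $\ge2$ splits as $w=uv$ with $v$ its longest proper Lyndon suffix, $u$ Lyndon (this is the Lyndon standard factorization), and $u<_{\lex}v$.
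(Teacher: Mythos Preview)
Your streamlined argument is correct and is essentially the classical proof found in Lothaire. Note that the paper itself does not give a proof of this proposition: it simply states the result and refers to \cite{Lothaire1997}. So there is nothing to compare against on the paper's side; your write-up supplies the details the paper omits.

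A couple of presentational points. First, the exploratory paragraphs (``Key steps for the converse'') contain false starts and a self-acknowledged circularity; in a final version you should delete those entirely and keep only the streamlined argument. Second, in Case~1 you should make explicit why, when $u$ is a prefix of $s$, the factor $s'$ is nonempty and a \emph{proper} suffix of $v$: from $s>_{\lex}u$ (strict) and $u$ a prefix of $s$ you get $|s|>|u|$, and $|s'|=|s|-|u|<|s|\le|v|$ handles properness. You implicitly use this but do not state it. Third, in Case~2 your dichotomy ``either $u'$ and $u$ differ within the first $\min(|u'|,|u|)$ letters, or $u$ is a proper prefix of $u'$'' silently discards the possibility that $u'$ is a proper prefix of $u$; that case is indeed excluded (it would give $u'<_{\lex}u$), but say so. With those small clarifications the proof is complete.
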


By Theorem~\ref{thm:standard-fac}, we know that if a  word is Nyldon, then it admits at least one factorization $(u,v)$ into Nyldon words such that $u>_{\lex} v$. However, the converse is not true as illustrated in the following example. 

\begin{example}
Take $w={\tt 1001010010}$.  Then $(u,v)=({\tt 10010100,10})$ is a factorization of $w$ into Nyldon words such that $u>_{\lex} v$.
However, $w$ is not Nyldon since its Nyldon factorization is $({\tt 10010,10010})$.
\end{example}

We have  already seen in Section~\ref{sec:standard-fac} that, similarly to the Lyndon words, the Nyldon words admit a standard factorization. Recall that the \emph{standard factorization} of a Lyndon word $w$ is the factorization $(u,v)$ of $w$ where the right factor $v$ is chosen to be the longest Lyndon proper suffix of $w$. Analogously to Theorem~\ref{thm:standard-fac}, it is well known that if $(u,v)$ is the standard factorization of a Lyndon word $w$, then the left factor $u$ is Lyndon and $u<_{\lex}v$ \cite{Lothaire1997}. The following property of the right factors of the standard factorizations of Lyndon words does not stand in the case of Nyldon words. A proof of this result can be found in \cite{Duval1983}.

\begin{proposition}
\label{prop:lex-min-suffix}
The longest Lyndon proper suffix of a Lyndon word coincides with its  lexicographically smallest nonempty proper suffix.
\end{proposition}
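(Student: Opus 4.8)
The statement to prove is Proposition~\ref{prop:lex-min-suffix}: for a Lyndon word $w$, its longest Lyndon proper suffix equals its lexicographically smallest nonempty proper suffix. The plan is to argue that both words are in fact equal to the right factor $v$ of the standard factorization of $w$. By Theorem~\ref{thm:Lyndon-char-suffixes}, $w$ is smaller than all its proper suffixes, so in particular the lexicographically smallest nonempty proper suffix is well-defined and is strictly larger than $w$ but smaller than every other nonempty proper suffix. Call it $s_{\min}$, and let $v$ be the longest Lyndon proper suffix of $w$ (so $(w\setminus v, v)$ is the standard factorization). I want to show $s_{\min}=v$.

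\textbf{Key steps.} First I would show $v \le_{\lex} s_{\min}$, which is immediate: $v$ is itself a nonempty proper suffix of $w$, and $s_{\min}$ is the smallest such, so $s_{\min}\le_{\lex}v$; conversely I need the reverse, so really the first easy direction is $s_{\min}\le_{\lex}v$ for free. The substance is the other inequality: I must show that the lexicographically smallest proper suffix $s_{\min}$ is a \emph{Lyndon} word and is a suffix of $v$ (equivalently, that no Lyndon proper suffix of $w$ is longer than $s_{\min}$ unless it equals it). To see $s_{\min}$ is Lyndon: every nonempty proper suffix of $s_{\min}$ is also a nonempty proper suffix of $w$, hence $\ge_{\lex}s_{\min}$ by minimality; and it is not a prefix of $s_{\min}$... here I must be slightly careful — use the characterization $(ii)\Leftrightarrow(i)$ of Theorem~\ref{thm:Lyndon-char-suffixes}, noting that if $t$ is a proper suffix of $s_{\min}$ with $t \le_{\lex} s_{\min}$ then by minimality $t =_{\lex} s_{\min}$ is impossible for length reasons unless $s_{\min}$ is a proper prefix of $t$, which it cannot be; so $t >_{\lex} s_{\min}$ and $s_{\min}$ is Lyndon. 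Therefore $s_{\min}$ is a Lyndon proper suffix of $w$, so by definition of $v$ as the \emph{longest} Lyndon proper suffix we get $|s_{\min}|\le|v|$, i.e.\ $s_{\min}$ is a suffix of $v$. But $v$ is Lyndon, so by Theorem~\ref{thm:Lyndon-char-suffixes} again, $v$ is smaller than all its nonempty proper suffixes; if $s_{\min}$ were a \emph{proper} suffix of $v$ we would get $v <_{\lex} s_{\min}$, contradicting that $s_{\min}$ is the smallest nonempty proper suffix of $w$ (and $v$ is one such suffix). Hence $s_{\min}=v$.

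\textbf{Main obstacle.} The delicate point is the argument that $s_{\min}$ is Lyndon — specifically, ruling out the "prefix" alternative in the definition of the lexicographic order when comparing $s_{\min}$ to its own proper suffixes. One cannot have $s_{\min}$ be a proper prefix of one of its proper suffixes $t$ while also having $t \ge_{\lex} s_{\min}$ cause no harm, because if $s_{\min}$ is a proper prefix of $t$ then $s_{\min} <_{\lex} t$ is automatic and fine; the real worry is a suffix $t$ that is a proper prefix of $s_{\min}$, which would force $t <_{\lex} s_{\min}$ and contradict minimality among proper suffixes of $w$. So actually this case simply cannot occur, which is the clean way to close it. A secondary subtlety is making sure everything stays a \emph{proper} suffix of $w$ throughout (so that minimality of $s_{\min}$ applies), which holds because proper suffixes of proper suffixes of $w$ are proper suffixes of $w$. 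With those checks, the proposition follows with no computation.
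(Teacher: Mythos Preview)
Your argument is correct. Note, however, that the paper does not actually supply its own proof of Proposition~\ref{prop:lex-min-suffix}: it merely cites Duval~\cite{Duval1983}. Your proof is the standard one and is exactly what one finds in the literature: show that the lexicographically smallest nonempty proper suffix $s_{\min}$ is Lyndon (because every nonempty proper suffix of $s_{\min}$ is also a nonempty proper suffix of $w$, hence $\ge_{\lex} s_{\min}$, and strictly so by length), then use maximality of $|v|$ among Lyndon proper suffixes to get $|s_{\min}|\le|v|$, and finally use that $v$ is Lyndon to rule out $s_{\min}$ being a \emph{proper} suffix of $v$.

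One minor comment: your ``Main obstacle'' paragraph overcomplicates the verification that $s_{\min}$ is Lyndon. There is no prefix-case subtlety to worry about. Once you know $s_{\min}\le_{\lex} t$ for every nonempty proper suffix $t$ of $s_{\min}$ (by minimality among proper suffixes of $w$), the inequality is automatically strict since $|t|<|s_{\min}|$ forces $t\ne s_{\min}$, and $\le_{\lex}$ is a total order. That immediately gives condition~(ii) of Theorem~\ref{thm:Lyndon-char-suffixes}.
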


\begin{example}
\label{ex:lex-max-suffix}
Let $w={\tt 100}$. It is a binary Nyldon word. The longest Nyldon proper suffix of $w$ is $v={\tt 0}$. Hence the standard factorization of $w$ is $({\tt 10,0})$. However, the lexicographically greatest proper suffix of $w$ is ${\tt 00}$.
\end{example}

There is another distinguished factorization of Lyndon words:  the \emph{{\v S}ir{\v s}ov factorization} of a Lyndon word $w$ is the factorization $(u,v)$ of $w$ where the left factor $u$ is chosen to be the longest Lyndon proper prefix of $w$ \cite{Sirsov1962}. Symmetrically to the standard factorization, the {\v S}ir{\v s}ov factorization $(u,v)$ has the property that its right factor $v$ must be Lyndon and such that $u<_{\lex}v$ \cite{Viennot1978}.  The {\v S}ir{\v s}ov factorization can be seen as a left standard factorization whereas the usual standard factorization can be seen as a right standard factorization. Therefore, the Lyndon words are left and right privileged words. Nyldon words, however, are not left privileged. Indeed, most properties of Lyndon prefixes do not have analogues in terms of Nyldon words, whereas Lyndon and Nyldon words share many properties concerning their suffixes (though not all of them -- see for example Proposition~\ref{prop:lex-min-suffix} and Example~\ref{ex:lex-max-suffix}, as well as Proposition~\ref{prop:prefixes}).

In the next example, we illustrate that there cannot be any analogue to the {\v S}ir{\v s}ov factorization for Nyldon words.

\begin{example}
Consider the Nyldon word $w={\tt 10010100100}$. The longest Nyldon proper prefix of $w$ is ${\tt 100101001}$, while the corresponding suffix ${\tt 00}$ is not Nyldon. 
\end{example}

\subsection{Factorizing powers}

Lyndon words are primitive by definition. But, similarly to the proof we gave of Theorem~\ref{thm:Lyndon-suffix}, even if we suppose that all we know about Lyndon words is their recursive definition, then we can easily deduce that Lyndon word are primitive and that each primitive conjugacy class contains exactly one Lyndon word from their recursive definition. Otherwise stated, we do not need the full power of Schützenberger's theorem in order to obtain the primitivity of Lyndon words. We give a new proof of the following well-known result, where we again assume that being Lyndon only means belonging to the set $L$ defined in Corollary~\ref{cor:LisL}. Again, the point is to highlight the major differences in the behaviors of Lyndon and Nyldon words. In particular, the proof shows that the Lyndon factorizations of powers are straightforwardly obtained from the Lyndon conjugate of their primitive roots. In view of Example~\ref{ex:Nyldon-powers} below, we see that the same reasoning does not work for Nyldon words.

\begin{proposition}
\label{prop:Lyndon-primitive}
Lyndon words are primitive and each primitive conjugacy class contains exactly one Lyndon word.
\end{proposition}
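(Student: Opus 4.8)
The plan is to argue entirely within the recursive definition of Lyndon words (the set $L$ from Corollary~\ref{cor:LisL}), since the whole point of this subsection is to mirror the Nyldon arguments without invoking minimality. I would split the statement into three claims: (a) every primitive conjugacy class contains \emph{at least} one Lyndon word; (b) it contains \emph{at most} one; and (c) a power $u^k$ with $k\ge 2$ is never Lyndon, i.e. Lyndon words are primitive. Given Corollary~\ref{cor:LisL} and Theorem~\ref{thm:Lyndon-suffix} (which we have just reproved recursively), together with the fact that Lyndon words form a complete factorization under $<_{\lex}$, the cleanest route is actually to derive primitivity and the conjugacy statement \emph{from} the Lyndon factorization, exactly as Theorem~\ref{thm:conjugacy} does for Nyldon via Schützenberger — but since we want a self-contained recursive argument, I expect the author instead proceeds by hand.

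First I would handle the key lemma: for a primitive word $x$, among all its conjugates there is a unique one, call it $\ell$, that is lexicographically smallest, and this $\ell$ is Lyndon. Using Theorem~\ref{thm:Lyndon-suffix} (or rather its contrapositive together with the recursive definition) one shows $\ell$ cannot be factored into a nonincreasing product of shorter Lyndon words: if $\ell=\ell_1\cdots\ell_k$ with $k\ge 2$ and $\ell_1\ge_{\lex}\cdots\ge_{\lex}\ell_k$, then $\ell_k$ is a proper suffix of $\ell$, and one derives a conjugate of $x$ that is $\le_{\lex}\ell$, contradicting minimality unless the conjugate equals $\ell$, which forces $x$ non-primitive. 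This gives existence in (a). For uniqueness (b): if two Lyndon words $w,w'$ in the same primitive class had $w\ne w'$, then one is a proper suffix-rotation of the other; writing $w'=vu$, $w=uv$, Theorem~\ref{thm:Lyndon-suffix} applied to whichever of $u,v,uv,vu$ is relevant yields contradictory inequalities. I would make this precise by noting that if $w=uv$ is Lyndon then $v$ Lyndon forces $w<_{\lex}v$, hence $w<_{\lex}vu=w'$, and symmetrically $w'<_{\lex}w$, absurd; the case where $v$ is not Lyndon is dispatched via its Lyndon factorization and Theorem~\ref{thm:Lyndon-suffix} as in the proof of Theorem~\ref{thm:lex-suffix-Lyndon}.

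For (c), primitivity: suppose $w=u^k$ with $k\ge 2$ is Lyndon, and take $u$ to be the primitive root. One shows by induction on length that $u$ must itself be Lyndon — indeed a nontrivial Lyndon factorization of $u$ lifts to one of $u^k=w$ — and then $u$ is a Lyndon proper suffix of $w=u^k$ with $u\le_{\lex}u=u$, so Theorem~\ref{thm:Lyndon-suffix} demands $u>_{\lex}w=u^k$; but $u$ is a proper prefix of $u^k$, so $u\le_{\lex}u^k$, a contradiction. The main obstacle, and the place I would be most careful, is the existence direction (a): turning "minimal conjugate is Lyndon" into an argument that uses only the recursive definition requires a clean statement of how a factorization $(\ell_1,\dots,\ell_k)$ of a conjugate interacts with cyclic rotations — essentially the same suffix/prefix splitting bookkeeping (finding $i$, $x$, $y$ with $\ell_i=xy$) that appears in the proofs of Theorems~\ref{thm:suffix}, \ref{thm:unicity}, and \ref{thm:Lyndon-suffix}. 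I would lean on that now-familiar combinatorial lemma, and flag explicitly that, unlike the Nyldon case, here the Lyndon factorization of $u^k$ is simply $(u,u,\dots,u)$ once $u$ is shown Lyndon, which is the sharp contrast with Example~\ref{ex:Nyldon-powers} the author wants to emphasize.
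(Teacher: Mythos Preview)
Your plan and the paper's proof diverge substantially. The paper runs a single induction on the length $n$, carrying both primitivity and the conjugacy statement as the hypothesis for shorter words, and its central device---the very point it wants to contrast with Example~\ref{ex:Nyldon-powers}---is an \emph{explicit} Lyndon factorization of an arbitrary power. Namely, if $w=x^m$ with $x$ primitive, the induction hypothesis supplies the Lyndon conjugate $y=vu$ of $x=uv$, one rewrites $w=u\,y^{m-1}\,v$, and then the concatenation of the Lyndon factorization $(u_1,\ldots,u_k)$ of $u$, the $m{-}1$ copies of $y$, and the Lyndon factorization $(v_1,\ldots,v_\ell)$ of $v$ is nonincreasing: $u_k>_{\lex}y$ because $u_k$ is a Lyndon proper suffix of $y$ (Theorem~\ref{thm:Lyndon-suffix}), and $y>_{\lex}v_1$ because $v_1$ is a proper prefix of $y$. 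Uniqueness at length $n$ is then handled not by comparing $w$ and $w'$ directly as you do, but by observing that if $x\ne y$ are conjugate Lyndon words of length $n$ then $x^2$ acquires two distinct Lyndon factorizations, $(x,x)$ and $(u_1,\ldots,u_k,y,v_1,\ldots,v_\ell)$, contradicting the unicity that follows from Theorem~\ref{thm:Lyndon-suffix}. Your existence argument (a) via the lex-minimal conjugate is exactly the classical characterization the paper is trying to bypass; the paper sidesteps it by only ever invoking existence at strictly shorter lengths through the induction hypothesis.

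There is also a genuine gap in your step (c). The claim that a nontrivial Lyndon factorization $(\ell_1,\ldots,\ell_j)$ of $u$ ``lifts to one of $u^k$'' fails: repeating the block $k$ times is nonincreasing only if $\ell_j\ge_{\lex}\ell_1$, which combined with $\ell_1\ge_{\lex}\ell_j$ forces all $\ell_i$ equal and contradicts the primitivity of $u$. One \emph{can} rescue the conclusion that $u$ must be Lyndon when $w=u^k$ is Lyndon, but by a different argument: $\ell_j$ is a Lyndon proper suffix of $w$, so $\ell_j>_{\lex}w$ by Theorem~\ref{thm:Lyndon-suffix}, while $\ell_1$ is a prefix of $w$, giving $\ell_j\le_{\lex}\ell_1\le_{\lex}w$, a contradiction. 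This is not the paper's route either---the paper never needs $u$ itself to be Lyndon, only that the primitive root $x$ has \emph{some} Lyndon conjugate $y$, which is what the rewriting $w=u\,y^{m-1}\,v$ exploits.
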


\begin{proof}
We proceed by induction on the length $n$ of the words. Since all letters are Lyndon, the base case $n=1$ is verified. Now suppose that $n>1$ and that all Lyndon words of length less than $n$ are primitive and that there is exactly one element in each primitive conjugacy class which is Lyndon. Let $w$ be a power of length $n$. Then $w=x^m$, for some primitive word $x$ and $m\ge 2$. By induction hypothesis, we know that $x$ possesses a Lyndon conjugate: $y=vu$ is Lyndon and $x=uv$. Then $w=x^m=(uv)^m=u(vu)^{m-1}v=u y^{m-1} v$. Let $(u_1,\ldots, u_k)$ and $(v_1,\ldots, v_\ell)$ be the Lyndon factorizations of $u$ and $v$ respectively. Then $u_k\ge_{\lex} y\ge_{\lex} v_1$ by Theorem~\ref{thm:Lyndon-suffix}. Therefore $w=u_1\cdots u_k \cdot y^{m-1} \cdot  v_1\cdots v_\ell$ is not Lyndon. So far, we have obtained that all Lyndon words of length $n$ are primitive. 

Now suppose that there exist distinct Lyndon words $x,y$ of length $n$ in the same conjugacy class. Let $x=uv$ and $y=vu$ with $u,v\neq\varepsilon$. Then, in the same way as in the previous paragraph, we obtain that  $x^2$ has two distinct Lyndon factorizations: $(x,x)$ and $(u_1,\ldots, u_k, y, v_1,\ldots, v _\ell)$. But Theorem~\ref{thm:Lyndon-suffix} implies the unicity of the Lyndon factorization (similarly as Theorem~\ref{thm:suffix} implies the unicity of the Nyldon factorization), hence we have reached a contradiction.
\end{proof}

The fact that all Nyldon are primitive is a consequence of Schützenberger's theorem; see Theorems~\ref{thm:Schutzenberger65} and~\ref{thm:conjugacy}. It would thus be interesting to be able to understand the Nyldon factorizations of powers. Indeed, if we could effectively deduce the Nyldon factorization of a power $w=u^k$ from the Nyldon conjugate of its primitive root $u$, then we would obtain a much simpler proof of the fact that no power can be Nyldon. 
In the following example, we show some surprising Nyldon factorizations of successive powers of some primitive word.

\begin{example}
\label{ex:Nyldon-powers}
The primitive word 
\[
	u={\tt 0111101 1011111011110111}
\] 
is not Nyldon since its  Nyldon factorization is ${\tt (0,1,1,1,101,1011111011110111)}$. The Nyldon conjugate of $u$ is 
\[
	n={\tt 10111101101111101111011}={\tt 1}u{\tt 1}^{-1}.
\] 
We write $u=ps$, with $p={\tt 0111101}$ and $s={\tt 1011111011110111}$. Thus, the Nyldon factorization of 
$u$ is given by ${\tt (0,1,1,1,101},s)$. Next, the Nyldon factorization of $u^2$ is given by ${\tt (0,1,1,1,101},x,yp,s)$, where $x={\tt 101111}$ and $xy=s$. Table~\ref{table:Nyldon-powers} stores the Nyldon factorizations of the powers of $u$.
\begin{table}[h]
{\centering
\[
	\begin{array}{c | l }
	k 		& \text{Nyldon factorization of } u^k 											\\
	\hline 
	1 		& ({\tt 0,1,1,1,101},s)														\\
	2 		& ({\tt 0,1,1,1,101},x,yp,s)								\\	
	3 		& ({\tt 0,1,1,1,101},x,y,px,yp,s)		\\
	\ge 4 	 & ({\tt 0,1,1,1,101},x,y{\tt 1}^{-1},n^{k-4},n{\tt 1}px,yp,s)	
	\end{array}
\]}
\caption{The Nyldon factorizations of the powers of $u={\tt 01111011011111011110111}$.}
\label{table:Nyldon-powers}
\end{table}
\end{example}

In fact, it seems surprisingly difficult to understand the Nyldon factorizations of powers. We leave this concern for future work and state the following open problem.

\begin{open}
Given a primitive word $u$, is it true that there exists some positive integer $K$ such that, for all $k\ge K$, the Nyldon factorization of $u^k$ is of the form $(p_1,\ldots,p_m,v^{k-K},s_1,\ldots,s_n)$ where $v$ is the Nyldon conjugate of $u$? If yes, characterize the smallest such $k$. More generally, describe the Nyldon factorizations of powers in terms of the Nyldon conjugates of their primitive roots. \end{open}

\subsection{About codes}

We end our comparison between Nyldon and Lyndon words by a discussion on circular codes and comma-free codes. Recall that a subset $F$ of $A^*$ is a \emph{code} if for any $x_1,\ldots,x_m,y_1,\ldots,y_n$ in $F$, we have $x_1 \cdots x_m = y_1 \cdots y_n$ if only if $m=n$ and $x_i =y_i$ for all $i\in\{1,\ldots,m\}$. 

\begin{definition}
Let $F$ be a code. 
\begin{itemize}
\item $F$ is said to be a {\em circular code} if for any $u,v\in A^*$, we have $uv,vu\in F^* \implies u,v\in F^*$.
\item  $F$ is said to be a \emph{comma-free code} if for any $w\in F^+$ and $u,v\in A^*$, we have $uwv\in F^*\implies u,v\in F^*$.
\end{itemize}
\end{definition}

The Lyndon words of length $n$ over a $k$-letter alphabet that form a comma-free code are completely characterized in terms of $n$ and $k$, see for example \cite{Berstel-Perrin-Reutenauer2010}. 

\begin{theorem}
Let $A$ be an alphabet of size $k$ and let $n\ge 1$. Then the set $\Lyn\cap A^n$ of Lyndon words of length $n$ over $A$ is a comma-free code if and only if $n=1$, or $n=2$ and $k\in\{2, 3\}$, or $n\in\{3,4\}$ and $k= 2$. 
\end{theorem}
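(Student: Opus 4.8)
The plan is to establish both directions of the equivalence, treating the "if" direction (the listed cases do yield comma-free codes) and the "only if" direction (no other $n,k$ work) separately. I would first recall the basic fact, valid for any code of words of a fixed length $n$, that $\Lyn\cap A^n$ is comma-free if and only if no word $w$ in $(\Lyn\cap A^n)$ occurs as an internal factor of a concatenation of two such words; equivalently, writing $w = uv$ with $u,v$ nonempty, the word $vzu$ is never again a product of Lyndon words of length $n$ for any $z$ that is itself such a product. Since all words have the same length, the combinatorics reduces to overlaps: $\Lyn\cap A^n$ fails to be comma-free exactly when there exist Lyndon words $x,y,w$ of length $n$ and a nonempty proper factorization $w=uv$ such that $vu$ is a product of two Lyndon words of length $n$, or more precisely such that $x$ and $y$ have a common nontrivial overlap realizing an occurrence of $w$ straddling the boundary. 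So the first step is to set up this overlap characterization cleanly.

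For the "if" direction, I would simply verify by a finite check that in each listed case the set is comma-free. For $n=1$ this is immediate since $\Lyn\cap A^1 = A$ and $A$ is trivially comma-free. For $n=2$ with $k\in\{2,3\}$, and for $n\in\{3,4\}$ with $k=2$, the sets $\Lyn\cap A^n$ are small (respectively of size $1$, $3$, $2$, $3$), so one lists them explicitly and checks directly that no Lyndon word of that length occurs as a straddling factor of a product of two others. For instance for $n=2$, $k=2$ the only Lyndon word is $\mathtt{01}$ and $\mathtt{0101}$ clearly has no internal occurrence of $\mathtt{01}$; the $k=3$ and $k=2$, $n\in\{3,4\}$ cases are equally short case analyses. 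These are routine finite verifications.

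The real work is the "only if" direction: showing that for all other pairs $(n,k)$ the set $\Lyn \cap A^n$ is not comma-free. I would split this according to whether $k$ is large or $n$ is large. When $n\ge 2$ and $k$ is large enough (at least $4$ for $n=2$, at least $3$ for $n=3$ or $4$, at least $2$ for $n\ge 5$), I would exhibit explicit Lyndon words producing a violating overlap. A uniform strategy: pick a short Lyndon word and look at its square or a near-square; for example for $n\ge 5$ over $\{\mathtt 0,\mathtt 1\}$ consider words of the form $\mathtt 0^{a}\mathtt 1 \mathtt 0^{b}\mathtt 1 \cdots$ whose cyclic shifts realize an internal occurrence of another length-$n$ Lyndon word, and check the factorization boundary. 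The cleanest template is to produce $x = uv$, $y = vu'$ (with $|u|, |v|$ chosen so that a length-$n$ Lyndon word $w$ appears starting inside $x$ and ending inside $y$), which forces a comma-free violation. Concretely one can often take $w$, $x$, $y$ all to be explicit small modifications of a single primitive word; Table-style small examples (e.g. for $n=2$, $k=4$ the Lyndon words $\mathtt{02},\mathtt{13}$ concatenate to $\mathtt{0213}$ containing $\mathtt{21}$... adjusting the letters appropriately) serve as the base cases, and for larger $k$ one embeds these.

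The main obstacle I expect is organizing the "only if" direction into finitely many families rather than an ad hoc list: one must handle the boundary cases ($n=2$, $k=4$; $n=3$, $k=3$; $n=4$, $k=3$; $n=5$, $k=2$) carefully with explicit small counterexamples, and then argue monotonicity — that enlarging the alphabet or lengthening the words preserves the failure. Monotonicity in $k$ is easy (a larger alphabet contains the counterexample on fewer letters, and the extra letters do not interfere). Monotonicity in $n$ requires a small trick: given a violating triple of Lyndon words of length $n$, one pads them — typically by prepending a block of the minimal letter $\mathtt 0$ or by inserting a carefully placed $\mathtt 0$ — to get a violating triple of length $n+1$, checking that the padded words remain Lyndon and that the straddling occurrence survives. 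Getting this padding argument to preserve Lyndonicity is the delicate point, and I would devote the bulk of the proof to making that step airtight; once it is done, the theorem follows by reducing every excluded $(n,k)$ to one of the finitely many explicit base counterexamples.
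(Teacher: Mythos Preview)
The paper does not prove this theorem; it is quoted as a known result with a reference to \cite{Berstel-Perrin-Reutenauer2010}. So there is no ``paper's own proof'' to compare against directly. What the paper \emph{does} prove in full is the Nyldon analogue (Theorem~\ref{the:comma-free}, via Lemmas~\ref{lem:comma-free1}--\ref{lem:comma-free2}), and it is useful to measure your plan against that argument.

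Your overall architecture---finite check for the positive cases, explicit witnesses plus a reduction for the negative cases---is sound and matches the shape of the paper's Nyldon proof. The substantive difference, and the place where your proposal has a real gap, is the monotonicity-in-$n$ step. You propose to handle all large $n$ by starting from a single base counterexample (say at $n=5$, $k=2$) and ``padding'' with $\mathtt 0$'s to reach length $n+1$. But prepending or inserting a $\mathtt 0$ into each of $x,y,w$ does not in general preserve the straddling occurrence: the inserted letters in $x'y'$ will not sit where the inserted letter in $w'$ needs them to, so the factor occurrence is destroyed. You flag this as ``the delicate point,'' but I do not see a uniform padding scheme that works; making it work would amount to building a parametrized family anyway.

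The approach that actually goes through cleanly---and is exactly what the paper does for Nyldon words in Lemma~\ref{lem:comma-free2}---is to write down a single explicit one-parameter family of violating triples valid for all $n$ beyond the threshold. For Lyndon words over $\{\mathtt 0,\mathtt 1\}$ and $n\ge 5$, one such family is $x=\mathtt 0^{n-1}\mathtt 1$, $y=\mathtt 0\mathtt 1^{n-1}$, $w=\mathtt{0010}\mathtt 1^{n-4}$: all three are Lyndon, and $w$ occurs in $xy=\mathtt 0^{n-1}\mathtt 1\mathtt 0\mathtt 1^{n-1}$ starting at position $n-2$, giving $\mathtt 0^{n-3}\, w\, \mathtt 1^{3}=xy$ with neither $\mathtt 0^{n-3}$ nor $\mathtt 1^3$ in $(\Lyn\cap A^n)^*$. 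This replaces your padding step entirely and mirrors the paper's strategy. (Your monotonicity-in-$k$ observation is correct and is also how the paper organizes the remaining cases.)

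A small correction: your concrete witness for $n=2$, $k=4$ is wrong, since $\mathtt{21}$ is not Lyndon. A valid witness is $(\mathtt{01})(\mathtt{23})=\mathtt 0(\mathtt{12})\mathtt 3$ with $\mathtt{12}\in\Lyn\cap A^2$.
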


We provide an analogous result for Nyldon words. In particular, note that the Nyldon and Lyndon words of length $n$ over some alphabet $A$ do not necessarily form a comma-free code simultaneously. Surprisingly enough, Nyldon words more often form a comma-free code than Lyndon words. We cut the proof of Theorem~\ref{the:comma-free} in several technical lemmas.

\begin{lemma}
\label{lem:comma-free1}
The set $\Nyl\cap A$ is a comma-free code for any alphabet $A$.
\end{lemma}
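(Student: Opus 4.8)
The plan is to show that $\Nyl\cap A$, which is just the alphabet $A$ itself (since every letter is Nyldon by definition), is a comma-free code. First I would recall what it means for a set of single letters to be a code at all: a product $x_1\cdots x_m$ of letters is simply a word of length $m$, and since the alphabet is the full set of letters appearing in any word, the factorization of a word into letters is trivially unique — there is literally one way to write a word as a concatenation of single letters. So $A$ is a code, in fact the simplest possible code, and $A^* = A^*$ tautologically.

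Next I would verify the comma-free condition directly from the definition: for any $w \in A^+$ and any $u,v \in A^*$, we must check that $uwv \in A^* \implies u,v \in A^*$. But here $A^* $ is the entire free monoid — every finite word over $A$ belongs to $A^*$ — so the hypothesis $uwv \in A^*$ is automatically satisfied for all $u,v$, and likewise the conclusion $u,v \in A^*$ holds trivially for all $u,v \in A^*$. Hence the implication is vacuously (indeed trivially) true, and $\Nyl \cap A = A$ is a comma-free code.

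There is essentially no obstacle here; the only thing to be careful about is the bookkeeping that $\Nyl \cap A = A$ (immediate from Definition~\ref{def:Nyldon-recursive}, since letters are Nyldon and no word of length $0$ or a single letter can be anything else) and that $A^*$ as the ``$F^*$'' in the definition of comma-free code is the whole monoid when $F = A$. So the proof is a one- or two-line observation. Concretely:

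\begin{proof}
By Definition~\ref{def:Nyldon-recursive}, every letter is Nyldon, so $\Nyl\cap A=A$. Since the factorization of any word over $A$ into letters is obviously unique, $A$ is a code, and moreover $A^*$ is the whole free monoid over $A$. Now let $w\in A^+$ and $u,v\in A^*$ with $uwv\in A^*$. As $A^*$ contains every finite word over $A$, the conclusion $u,v\in A^*$ holds trivially. Hence $\Nyl\cap A=A$ is a comma-free code for any alphabet $A$.
\end{proof}
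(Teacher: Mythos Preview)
Your proof is correct and follows exactly the same approach as the paper, which simply observes that $\Nyl\cap A=A$ and declares the result immediate. Your version is more explicit about why $A$ being a code and $A^*$ being the whole free monoid make the comma-free condition trivial, but the underlying argument is identical.
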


\begin{proof}
This is immediate since $\Nyl\cap A=A$.
\end{proof}

\begin{lemma}
Let $A$ be an alphabet of size $k$. Then $\Nyl\cap A^2$ is a comma-free code if and only if $k\in\{2,3\}$.
\end{lemma}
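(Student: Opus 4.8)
The plan is to determine exactly which Nyldon words of length $2$ exist over an alphabet of size $k$, and then check the comma-free property directly. By Proposition~\ref{prop:prefixes}, a Nyldon word of length $2$ over $A = \{{\tt 0}, {\tt 1}, \ldots, {\tt k-1}\}$ must be of the form ${\tt ij}$ with ${\tt j} < {\tt i}$; conversely, every such word ${\tt ij}$ has only the factorization $({\tt i}, {\tt j})$ into shorter words, and since ${\tt i} >_{\lex} {\tt j}$ this is not a Nyldon factorization, so ${\tt ij}$ is indeed Nyldon. Hence $\Nyl \cap A^2 = \{{\tt ij} : {\tt j} < {\tt i}\}$, which has $\binom{k}{2}$ elements. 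First I would record this characterization as the starting point.

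Next I would handle the two directions. For the ``if'' direction, suppose $k \in \{2, 3\}$. When $k = 2$ the only Nyldon word of length $2$ is ${\tt 10}$, and I would verify that $\{{\tt 10}\}$ is comma-free: if ${\tt 10}$ occurs as an internal factor of a word in $\{{\tt 10}\}^*$, a short parity/position argument on the occurrences of ${\tt 10}$ (which must start at even positions in any word of $\{{\tt 10}\}^*$) forces the prefix and suffix around it to themselves lie in $\{{\tt 10}\}^*$. When $k = 3$ the code is $\{{\tt 10}, {\tt 20}, {\tt 21}\}$; here I would argue similarly, noting that every codeword ends in a letter strictly smaller than its first letter, so in a word of $F^*$ the ``descents'' at even-length boundaries pin down the factorization, and any internal occurrence of a codeword $w$ must align with a block boundary, giving $u, v \in F^*$ in the definition of comma-free code. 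The cleanest way is probably to show $\Nyl \cap A^2$ is a \emph{circular} code for $k \le 3$ and then upgrade to comma-free using the bounded length.

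For the ``only if'' direction, I would exhibit a witness to the failure of the comma-free property whenever $k \ge 4$. With ${\tt 0} < {\tt 1} < {\tt 2} < {\tt 3}$ available, the words ${\tt 20}$, ${\tt 31}$, ${\tt 30}$, ${\tt 21}$ are all Nyldon of length $2$, and the equality ${\tt 20} \cdot {\tt 31} = {\tt 2} \cdot {\tt 03} \cdot {\tt 1}$ shows that the codeword ${\tt 03}$... wait, ${\tt 03}$ is not Nyldon; instead I would use an internal occurrence: consider the word ${\tt 2}\,{\tt 31}\,{\tt 0}$ — no. The right witness is to take $uwv$ with $w \in F$ occurring internally but $u \notin F^*$: e.g. $u = {\tt 3}$, $w = {\tt 10}$, $v = {\tt 2}$, so $uwv = {\tt 3102}$, and I would check ${\tt 3102} = {\tt 31} \cdot {\tt 02}$ — but ${\tt 02}$ is not Nyldon. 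I would instead look for $x_1 x_2 = y_1 y_2$ type overlaps among length-$2$ Nyldon words that create a bad internal factor; concretely ${\tt 30} \cdot {\tt 21} = {\tt 3} \cdot {\tt 02} \cdot {\tt 1}$ fails for the same reason, so the genuinely correct approach is to find a word $z \in F^*$ of the form $z = u w v$ with $w$ a product of codewords, $u,v$ nonempty, where the unique factorization of $z$ into codewords does not respect the cut; the pair ${\tt 20}, {\tt 31}$ gives $({\tt 20})({\tt 31}) $ versus a shifted reading only if some length-$2$ factor straddling the boundary, namely ${\tt 03}$, is Nyldon, which it is not — so in fact for $k \ge 4$ one should check more carefully, and I expect \textbf{this ``only if'' direction to be the main obstacle}: one must produce the correct counterexample, most likely something like $z = {\tt 10}\cdot{\tt 21}\cdot{\tt 30}$ read against a shifted parse, or use three codewords so that an internal block ${\tt 02}$ or ${\tt 13}$ is avoided and instead a legitimate internal codeword like ${\tt 21}$ appears with non-$F^*$ context. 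I would settle this by a finite search over products of at most three length-$2$ Nyldon words, exhibit the explicit bad equation, and thereby conclude that $\Nyl \cap A^2$ fails to be comma-free (indeed fails to be a circular code) exactly when $k \ge 4$.
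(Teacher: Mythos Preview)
Your proposal has a genuine gap in the ``only if'' direction: you never exhibit a working counterexample for $k\ge 4$, and the candidates you try all fail. The paper's witness is exactly the kind of shifted parse you were searching for, but with the right choice of middle word: take $u={\tt 3}$, $w={\tt 21}$, $v={\tt 0}$. Then $uwv={\tt 3210}=({\tt 32})({\tt 10})\in(\Nyl\cap A^2)^*$, while $u,v\notin(\Nyl\cap A^2)^*$. Your attempts kept placing ${\tt 0}$ or a small letter to the \emph{left} of the straddled boundary, which forces the next length-$2$ block to start with that small letter and hence not be Nyldon; the fix is to put the \emph{large} letter on the left and the small one on the right, so that both straddling blocks ${\tt 32}$ and ${\tt 10}$ are descending pairs.

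There is also a smaller issue in your $k=3$ sketch. The suggestion to ``show $\Nyl\cap A^2$ is a circular code and then upgrade to comma-free using the bounded length'' does not work: circular codes of uniform block length need not be comma-free (the paper itself notes that Lyndon words of each fixed length form a circular code but are comma-free only for a few small parameters). Your earlier idea --- analysing where $x$ could start inside a block $y_i$ --- is the right one and is precisely what the paper does: if $y_i\in\{{\tt 10},{\tt 20}\}$ then $x$ would begin with ${\tt 0}$, impossible; if $y_i={\tt 21}$ then $x$ begins with ${\tt 1}$, so $x$ starts with ${\tt 10}$, forcing $y_{i+1}$ to begin with ${\tt 0}$, again impossible. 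Carry that argument through rather than detouring via circular codes.
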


\begin{proof}
If $k=2$, then $A=\{{\tt 0,1}\}$ and $\Nyl\cap A^2=\{{\tt 10}\}$. In this case, $\Nyl\cap A^2$ is clearly a comma-free code. 

Suppose that $k=3$. Then $A=\{{\tt 0,1,2}\}$ and $\Nyl\cap A^2=\{{\tt 10, 20, 21}\}$. Take $x\in (\Nyl\cap A^2)^+$ and $u,v\in A^*$ such that $uxv \in (\Nyl\cap A^2)^*$. Then there exists $\ell \ge 1$ and words $y_1, \ldots, y_\ell$ in $\Nyl\cap A^2$ such that $uxv = y_1 \ldots y_\ell$. Since words in $\Nyl\cap A^2$ are all of the same length $2$, in order to show that $u,v\in  (\Nyl\cap A^2)^*$, it is enough to prove that $x$ cannot start strictly within a factor $y_i$. Suppose to the contrary that it does. Then $i\in\{1,\ldots,\ell-1\}$. If $y_i\in\{{\tt 10, 20}\}$, then $x$ must start with ${\tt 0}$, whence $x\notin (\Nyl\cap A^2)^+$. Thus $y_i={\tt 21}$. Since $x\in (\Nyl\cap A^2)^+$, this implies that $x$ starts with ${\tt 10}$ and $y_{i+1}$ starts with ${\tt 0}$, which is impossible. This proves that $\Nyl\cap A^2$ is a comma-free code in this case as well.

Now, suppose that $k\ge 4$. Then $\{{\tt 0,1,2,3}\}\subseteq A$ and the words ${\tt 10, 21, 32}$ belong to $\Nyl\cap A^2$. Take $x={\tt 21}$, $u={\tt 3}$ and $v={\tt 0}$. Then $uxv = {\tt 3(21)0} = {\tt (32)(10)} \in N_2^*$ but $u,v \notin  (\Nyl\cap A^2)^*$. Therefore, $\Nyl\cap A^2$ is not a comma-free code if $k\ge4$.
\end{proof}

\begin{lemma}
Let $A$ be an alphabet of size $k$ and $n\in\{3,4,5,6\}$. Then $\Nyl\cap A^n$ is a comma-free code if and only if $k=2$.
\end{lemma}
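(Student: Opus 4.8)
The plan is to prove both directions of the equivalence, with the forward direction (if $\Nyl\cap A^n$ is a comma-free code then $k=2$) being the easy part and the converse (if $k=2$ then $\Nyl\cap A^n$ is a comma-free code for $n\in\{3,4,5,6\}$) being the main work. For the forward direction, I would argue by contraposition: if $k\ge 3$, I would exhibit, for each $n\in\{3,4,5,6\}$, an explicit witness to the failure of the comma-free property. Since Proposition~\ref{prop:prefixes} tells us every Nyldon word of length $\ge2$ starts with some ${\tt ij}$, ${\tt j<i}$, over the three-letter alphabet $\{{\tt 0,1,2}\}$ one can take words like ${\tt 2}0^{n-2}{\tt 1}$, ${\tt 2}0^{n-1}$, ${\tt 1}0^{n-1}$ (checking they are Nyldon via Algorithm~\ref{algo:fac} or directly) and form $uwv$ with $u={\tt 2}$ or a short prefix and $v$ a short suffix so that $uwv$ refactors into shorter Nyldon blocks; the pattern is analogous to the $n=2$ case in the preceding lemma. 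I would record one concrete family of witnesses per length in a small table.

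For the converse, fix $A=\{{\tt 0},{\tt 1}\}$ and $n\in\{3,4,5,6\}$. The strategy is the standard one for verifying that a fixed-length set $C=\Nyl\cap A^n$ is comma-free: since all codewords have the same length $n$, $C$ is comma-free iff no codeword $w\in C$ occurs in $c_1c_2$ ``straddling the boundary'' for any $c_1,c_2\in C$, i.e.\ iff for every $c_1,c_2\in C$ and every shift $d$ with $1\le d\le n-1$, the length-$n$ factor of $c_1c_2$ starting at position $d+1$ is not in $C$. This is a finite check: there are $\#(\Nyl\cap A^n)$ codewords (the values $2,3,6,9$ of sequence A001037 for $n=3,4,5,6$) and $n-1$ shifts, so at most a few hundred comparisons. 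I would use Proposition~\ref{prop:prefixes} to cut the work drastically: every codeword begins with ${\tt 10}$, so a straddling occurrence would need ${\tt 10}$ to appear at an interior position of $c_1c_2$, and the letter of $c_1$ or $c_2$ immediately before/after it must match ${\tt 10}$-compatible patterns; combined with the fact (also from Proposition~\ref{prop:prefixes}) that no Nyldon word of length $\ge2$ starts with ${\tt 00}$, ${\tt 01}$ or ${\tt 11}$, most shifts are eliminated immediately because the candidate straddling factor would start with a forbidden prefix. I would then handle the surviving shifts by inspecting the explicit lists of Nyldon words from Table~\ref{table:first-Lyndon-Nyldon}, organizing the argument as: ``suppose $w\in\Nyl\cap A^n$ occurs at interior position $d$ of $c_1c_2$; then its prefix ${\tt 10}$ forces the suffix of $c_1$ of length $n-d$ to end in a specific way, and the prefix of $c_2$ of length $d$ to begin in a specific way; checking the finitely many $(c_1,c_2)$ with these shapes yields a contradiction.''

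The main obstacle is the bookkeeping in the converse for $n=5$ and $n=6$, where there are $6$ and $9$ Nyldon words respectively and the number of (codeword pair, shift) combinations is no longer trivially small; the risk is an unilluminating case explosion. I would mitigate this by extracting the structural features of binary Nyldon words visible in Table~\ref{table:first-Lyndon-Nyldon} — they start with ${\tt 10}$, they are greater than all their Nyldon proper suffixes (Theorem~\ref{thm:suffix}), and the forbidden-prefix families of Proposition~\ref{prop:forbidden-pref} (e.g.\ ${\tt 10}^k{\tt 10}^k$, ${\tt 1010}$) rule out many candidate straddling factors outright — and use these as lemmatic filters before any enumeration. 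A clean way to present this is to first prove a sub-lemma: \emph{if $c_1c_2$ with $c_1,c_2\in\Nyl\cap A^n$ has an interior factor $w$ of length $n$ starting with ${\tt 10}$, then the boundary between $c_1$ and $c_2$ falls inside the ${\tt 0}$-run following the initial ${\tt 1}$ of $w$ or immediately after it}, which pins the shift to one or two values; the remaining finite verification is then short enough to write out explicitly for each $n$.
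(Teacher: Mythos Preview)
Your proposal is correct and takes essentially the same approach as the paper: explicit counterexamples over $\{{\tt 0,1,2}\}$ for each $n\in\{3,4,5,6\}$ to handle $k\ge 3$, and a finite straddling check for $k=2$ driven by the structural facts that every binary Nyldon word of length $\ge 2$ begins with ${\tt 10}$ and that ${\tt 1010}$ is a forbidden prefix. The paper's counterexamples are the uniform family ${\tt 2}\cdot({\tt 1}0^{n-2}{\tt 1})\cdot 0^{n-1}=({\tt 21}0^{n-2})({\tt 1}0^{n-1})$ for $n\ge 4$ (and ${\tt 2}({\tt 101}){\tt 01}=({\tt 210})({\tt 101})$ for $n=3$), and for $k=2$ it simply says the check is easy and illustrates only $n=5$; your sub-lemma is a reasonable way to organize the same verification but is not needed beyond those two prefix facts.
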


\begin{proof}
First, assume that $k=2$. Then $A=\{{\tt 0,1}\}$ and it is easily verified that $\Nyl\cap A^n$ are comma-free codes for $n\in\{3,4,5,6\}$. Let us give some more details for the case $n=5$. We have $\Nyl\cap A^5=\{{\tt 10000, 10001, 10010, 10011 ,10110,10111}\}$. Take $x\in (\Nyl\cap A^5)^+$ and $u,v\in A^*$, and assume that $uxv \in (\Nyl\cap A^5)^*$. Then there exists $\ell \ge 1$ and words $y_1, \ldots, y_\ell$ in $\Nyl\cap A^5$ such that $uxv = y_1 \cdots y_\ell$. Since words in $\Nyl\cap A^5$ are all of the same length $5$, in order to show that $u,v\in  (\Nyl\cap A^5)^*$, it is enough to prove that $x$ cannot start strictly within a factor $y_i$. It is easily seen to be the case
since all binary Nyldon words of length greater than or equal to $2$ start with ${\tt 10}$ and no Nyldon words start with ${\tt 1010}$.
This proves that $\Nyl\cap A^5$ is a comma-free code. The cases $n\in\{3,4,6\}$ are similar.

Second, we show that $\Nyl\cap A^n$ is not a comma-free code whenever $k\ge 3$ and $n\in\{3,4,5,6\}$. Let $k\ge 3$. Then $\{{\tt 0,1,2}\}\subseteq A$. We start with the case $n=3$. Since the words ${\tt 101}$ and ${\tt 210}$ both belong to $\Nyl\cap A^3$ and since ${\tt 2(101)01} = {\tt (210)(101)}$, we get that $\Nyl\cap A^3$ is not a comma-free code. Similarly, since 
\begin{align*}
&{\tt 1000, 1001, 2100}\in\Nyl\cap A^4 \text{ and } {\tt 2(1001)000 = (2100)(1000)}, \\
&{\tt 10000, 10001, 21000}\in\Nyl\cap A^5 \text{ and }  {\tt 2(10001)0000 = (21000)(10000)}, \\
&{\tt 100000, 100001, 210000}\in \Nyl\cap A^6  \text{ and } {\tt 2(100001)00000 = (210000)(100000)},
\end{align*}
we get that the codes $\Nyl\cap A^n$ are not comma-free for $n\in\{3,4,5\}$. 
\end{proof}

\begin{lemma}
\label{lem:comma-free2}
If $n\ge 7$, then $\Nyl\cap A^n$ is not a comma-free code for any alphabet $A$ of size greater than or equal to $2$.
\end{lemma}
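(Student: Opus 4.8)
If $n\ge 7$, then $\Nyl\cap A^n$ is not a comma-free code for any alphabet $A$ of size at least $2$.

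The plan is to exhibit, for each $n\ge 7$ and each alphabet $A$ of size at least $2$, three Nyldon words $x,y_1,y_2$ of length $n$ together with nonempty words $u,v$ such that $uxv=y_1y_2$ with $u,v$ proper (so $u,v\notin (\Nyl\cap A^n)^*$, which here just means $u,v$ are not empty and not a concatenation of length-$n$ blocks, i.e.\ $0<|u|<n$). Since all the offending examples can be built using only the two smallest letters ${\tt 0}$ and ${\tt 1}$, it suffices to handle the binary case $A=\{{\tt 0},{\tt 1}\}$: any witness over $\{{\tt 0},{\tt 1}\}$ remains a witness over a larger $A$, because being Nyldon over $\{{\tt 0},{\tt 1}\}$ is the same as being Nyldon over $A$ for words using only ${\tt 0},{\tt 1}$ (the recursive definition only ever compares the words involved, and a binary word has only binary Nyldon factors). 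So I would reduce to producing, for every $n\ge 7$, a single self-overlapping configuration of binary Nyldon words of length $n$.

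The natural candidate is to look for a binary Nyldon word $z$ of length $n$ that overlaps itself, i.e.\ has a nonempty proper prefix that is also a suffix — but a cleaner route is to find Nyldon words of length $n$ of the form ${\tt 10}^{n-1}$ and ${\tt 10}^{a}{\tt 1}{\tt 0}^{b}$ and exploit a shift by one letter. Concretely, from Table~\ref{table:first-Lyndon-Nyldon} we see ${\tt 1000000}$ and ${\tt 1000001}$ are Nyldon of length $7$, and ${\tt 2}$ was the trick in lower lengths; in binary the analogous trick is: ${\tt 1}({\tt 0}^{n-2}{\tt 1}){\tt 0}^{?}$ rearranged. More precisely I would use the identity
\[
{\tt 1}\,\bigl({\tt 0}^{n-2}{\tt 1}\bigr)\,{\tt 0}^{n-1} \;=\; \bigl({\tt 1}{\tt 0}^{n-2}{\tt 1}\bigr)\,\bigl({\tt 0}^{n-1}\bigr),
\]
but ${\tt 0}^{n-1}$ is not Nyldon. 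Instead, the working identity is obtained by prepending: for $x = {\tt 1}{\tt 0}^{n-2}{\tt 1}$ (which one must check is Nyldon of length $n$), $u={\tt 1}$, $v$ the rest, one writes $uxv = ({\tt 1}{\tt 0}^{n-1})(\cdots)$, and the right blocks must each be chosen Nyldon of length $n$. The honest plan, therefore, is: first, identify via Proposition~\ref{prop:prefixes} and the forbidden-prefix results of Section~\ref{sec:prefixes} a short list of binary Nyldon words of length $n$ of the shape ${\tt 10}^k$ or ${\tt 10}^k{\tt 1}\cdots$; second, find an overlap relation $uxv=y_1y_2$ among length-$n$ words in that list with $1\le|u|<n$. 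The key structural fact I would lean on is that ${\tt 10}^{n-1}$ is Nyldon for every $n\ge1$ (it cannot be nontrivially Nyldon-factorized: any factorization into shorter Nyldon words would have first factor starting with ${\tt 1}$ and the rest all equal to ${\tt 0}$, forcing the factorization ${\tt (1,0,\ldots,0)}$ which is nondecreasing, so $w$ is not Nyldon only if such a factorization exists — wait, that shows ${\tt 10}^{n-1}$ IS not Nyldon; so I must be careful). Re-examining: ${\tt 100}$ is listed as Nyldon, yet $({\tt 1},{\tt 0},{\tt 0})$ is nondecreasing; the resolution is that it is also not a valid shorter factorization's obstruction — actually $({\tt 1,0,0})$ is nondecreasing so ${\tt 100}$ should NOT be Nyldon. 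But the table says it is, because ${\tt 0} <_{\lex} {\tt 1}$ means $({\tt 0},{\tt 1},\ldots)$ is the increasing direction; a nondecreasing sequence must have ${\tt 0}$'s first. So ${\tt 100}$'s candidate factorization $({\tt 1},{\tt 0},{\tt 0})$ is decreasing, not nondecreasing — hence not an obstruction, and ${\tt 100}$ is Nyldon. Good: so ${\tt 10}^{n-1}$ IS Nyldon for all $n$.

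So the clean plan is: take $x = {\tt 1}{\tt 0}^{n-3}{\tt 1}{\tt 0}$ or similar, and instead use the two length-$n$ Nyldon words $y_1 = {\tt 1}{\tt 0}^{n-1}$ and $y_2 = {\tt 1}{\tt 0}^{n-1}$ with $u={\tt 1}{\tt 0}^{a}$, obtaining $uxv$ where $x$ is a length-$n$ factor straddling the boundary; explicitly $y_1 y_2 = {\tt 1}{\tt 0}^{n-1}{\tt 1}{\tt 0}^{n-1}$, and the middle length-$n$ block starting at position $2$ is ${\tt 0}^{n-2}{\tt 1}{\tt 0}$ — not Nyldon. Shifting to start at the right spot: the block ${\tt 0}^{j}{\tt 1}{\tt 0}^{n-1-j}$ is never Nyldon. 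So this particular pair fails, and the real content of the proof — and the main obstacle — is to locate for every $n\ge7$ a genuinely self-overlapping length-$n$ Nyldon word, or two distinct length-$n$ Nyldon words one of whose conjugates equals the other, in a way giving $uxv=y_1y_2$. I would look for $x$ of the form ${\tt 10}^{k}{\tt 1}{\tt 0}^{k}{\tt 1}^{?}$ and use that ${\tt 10}^{k}{\tt 10}^{k}$ is a forbidden prefix (Proposition~\ref{prop:forbidden-pref}, first row of Table~\ref{table:ex-F}) to engineer the overlap; the forbidden-prefix families are exactly the tool that tells us which near-periodic words fail to be Nyldon, and dually which periodic-looking words *are* Nyldon. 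The hard part is the case analysis on $n \bmod$ something to produce a uniform witness; I expect one needs perhaps two or three families of witnesses (e.g.\ separating small $n\in\{7,\dots,12\}$ by hand and then a general construction for $n\ge 13$), checking Nyldon-ness of each witness via Algorithm~\ref{algo:fac} or directly via Theorem~\ref{thm:standard-fac}. Once the witnesses $x,y_1,y_2,u,v$ are in hand, verifying $uxv=y_1y_2$ is a routine string equality and $0<|u|<n$ is immediate, completing the proof.
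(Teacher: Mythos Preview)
Your overall strategy is exactly right and matches the paper's: reduce to the binary alphabet, then exhibit Nyldon words $x,y_1,y_2\in\Nyl\cap A^n$ and a word $u$ with $0<|u|<n$ such that $uxv=y_1y_2$. But your proposal never actually produces the witnesses. After several dead ends (the pair $y_1=y_2={\tt 10}^{n-1}$ cannot work, as you observe), you conclude that ``the hard part is the case analysis on $n\bmod$ something'' and that you ``expect one needs perhaps two or three families of witnesses.'' That is the gap: you have a plan but no construction, and the plan you sketch (case-splitting, handling small $n$ by hand) is more complicated than what is needed.

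The paper's proof shows that no case analysis is necessary---a single uniform family covers every $n\ge 7$. Take
\[
x={\tt 1}\,{\tt 0}^{n-4}\,{\tt 100},\qquad
y_1={\tt 1011}\,{\tt 0}^{n-4},\qquad
y_2={\tt 100101}\,{\tt 0}^{n-6},
\]
all of length $n$, and set $u={\tt 101}$, $v={\tt 101}\,{\tt 0}^{n-6}$. Then
\[
u\,x\,v \;=\; {\tt 101}\cdot{\tt 1}\,{\tt 0}^{n-4}\,{\tt 100}\cdot{\tt 101}\,{\tt 0}^{n-6}
\;=\; \bigl({\tt 1011}\,{\tt 0}^{n-4}\bigr)\bigl({\tt 100101}\,{\tt 0}^{n-6}\bigr) \;=\; y_1y_2,
\]
with $|u|=3<n$. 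The only remaining point is that $x,y_1,y_2$ are indeed Nyldon; this is straightforward to check (for instance via Theorem~\ref{thm:standard-fac} or Algorithm~\ref{algo:fac}), and is where the hypothesis $n\ge 7$ is used so that the exponent $n-6$ is nonnegative. Your attempts all stayed within words having at most one internal ${\tt 1}$; the key was to allow words like $y_1$ and $y_2$ with two or three ${\tt 1}$'s placed so that a shift by three letters realigns the block boundaries.
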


\begin{proof}
Suppose that $n\ge 7$ and $A$ is an alphabet containing $\{{\tt 0,1}\}$. Then, clearly, the words ${\tt 10110}^{n-4}, {\tt 1001010}^{n-6}$ and ${\tt 10}^{n-4}{\tt 100}$ belong to $\Nyl\cap A^n$. Since ${\tt 101 (10}^{n-4}{\tt 100) 1010}^{n-6} = ({\tt 101 10}^{n-4}) ({\tt 100 1010}^{n-6})$, we obtain that $\Nyl\cap A^n$ is not a comma-free code. 
\end{proof}

Thanks to Lemmas~\ref{lem:comma-free1} to~\ref{lem:comma-free2}, we obtain a characterization of the sets of Nyldon words of length $n$ that form a comma-free code.

\begin{theorem}
\label{the:comma-free}
Let $A$ be an alphabet of size $k$ and let $n\ge 1$. Then the set $\mathcal{N} \cap A^n$ of Nyldon words of length $n$ over $A$ is a comma-free code if and only if $n=1$, or $n=2$ and $k\in\{2, 3\}$, or $n\in\{3,4,5,6\}$ and $k= 2$. 
\end{theorem}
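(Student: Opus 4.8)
The proof is simply an assembly of the lemmas already established. The plan is to observe that Lemmas~\ref{lem:comma-free1} through~\ref{lem:comma-free2} cover, between them, every pair $(n,k)$ with $n\ge 1$ and $k\ge 2$, and that in each case they tell us precisely whether $\Nyl\cap A^n$ is a comma-free code. So the proof is a short case analysis on the length $n$.

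Concretely, I would argue as follows. If $n=1$, then Lemma~\ref{lem:comma-free1} gives that $\Nyl\cap A$ is a comma-free code for every alphabet $A$, matching the first case of the statement. If $n=2$, then the second lemma gives that $\Nyl\cap A^2$ is a comma-free code exactly when $k\in\{2,3\}$, matching the second case. If $n\in\{3,4,5,6\}$, then the third lemma gives that $\Nyl\cap A^n$ is a comma-free code exactly when $k=2$, matching the third case. Finally, if $n\ge 7$, then Lemma~\ref{lem:comma-free2} gives that $\Nyl\cap A^n$ is never a comma-free code, so these lengths contribute nothing to the list — consistent with the statement, which lists no case with $n\ge7$. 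Putting these four disjoint ranges of $n$ together exhausts all possibilities, and in each the lemma's conclusion coincides exactly with the claimed characterization.

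There is essentially no obstacle here: all the real work — the explicit comma-free-code verifications for small $(n,k)$ and the explicit counterexample words exhibiting non-comma-freeness for large $(n,k)$ — is already done inside the lemmas. The only thing to be careful about is that the four cases on $n$ are genuinely exhaustive and mutually exclusive (they are: $n=1$, $n=2$, $n\in\{3,4,5,6\}$, $n\ge7$), and that the ``if and only if'' in each lemma is being used in both directions so that the final biconditional is correctly obtained. The proof can therefore be written in a couple of lines.

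\begin{proof}
We combine Lemmas~\ref{lem:comma-free1} to~\ref{lem:comma-free2}, distinguishing cases according to the value of $n$. If $n=1$, then $\Nyl\cap A^n$ is a comma-free code by Lemma~\ref{lem:comma-free1}, for every alphabet $A$. If $n=2$, then $\Nyl\cap A^n$ is a comma-free code if and only if $k\in\{2,3\}$. If $n\in\{3,4,5,6\}$, then $\Nyl\cap A^n$ is a comma-free code if and only if $k=2$. Finally, if $n\ge7$, then $\Nyl\cap A^n$ is never a comma-free code, by Lemma~\ref{lem:comma-free2}. These four cases are mutually exclusive and cover all $n\ge1$, and in each case the conclusion matches the claimed characterization.
\end{proof}
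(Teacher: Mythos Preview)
Your proposal is correct and takes essentially the same approach as the paper: the paper simply states that the theorem is obtained ``thanks to Lemmas~\ref{lem:comma-free1} to~\ref{lem:comma-free2}'' without writing out any further argument, and your case analysis on $n$ is precisely the assembly of those lemmas.
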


Clearly, comma-free codes are always circular codes. Further, circular codes made up with words of the same length $n$ only contain primitive words and at most one element of each primitive conjugacy class of length $n$ \cite{Berstel-Perrin-Reutenauer2010}. Lyndon words of any fixed length $n$ are known to form a circular code, see for example \cite{Berstel-Perrin-Reutenauer2010}. However, we do not know whether this property holds true in the case of Nyldon words.

\begin{open}
Do the Nyldon words of any fixed length $n$ form a circular code?
\end{open}

\section{Lazard factorizations}
\label{sec:Lazard}

A well-known class of complete factorizations of the free monoid is that of Lazard sets~\cite{Viennot1978}. For our purpose, we need to make a distinction between left Lazard sets and right Lazard sets. In~\cite{Viennot1978}, a Lazard factorization corresponds to what is called here a left Lazard set.

\subsection{Background}

\begin{definition}
\label{def:Lazard}
A {\em left} (resp., {\em right}) {\em Lazard set} is a subset $F$ of $A^+$ endowed with some total order $\prec$ and such that, for any integer $n\ge 1$, if $F\cap A^{\le n}=\{u_1,\ldots,u_k\}$ with $k\ge1$ and $u_1\prec \cdots\prec u_k$ (resp., $u_1\succ \cdots\succ u_k$), and if we consider the sequence $(Y_i)_{i\ge 1}$ of sets defined by $Y_1=A$ and, for $i\ge1$, $Y_{i+1}=u_i^*(Y_i\setminus \{u_i\})$  (resp., $Y_{i+1}=(Y_i\setminus \{u_i\})u_i^*$), then we have
		\begin{itemize}
		\item[(i)] for every $i\in\{1,\ldots,k\}$, $u_i\in Y_i$
		\item[(ii)] $Y_k\cap A^{\le n}=\{u_k\}$.
		\end{itemize}	
A both left and right Lazard set is called a \emph{Viennot set}.
\end{definition}

Observe that the sets $Y_i$ of the left (resp., right) Lazard construction are all prefix-free (resp., suffix-free): no word in $Y_i$ is the prefix (resp., suffix) of another word in $Y_i$. Originally, Viennot sets were called {\em regular factorizations} \cite{Viennot1978}. The Lyndon words form a Viennot set if $\prec$ is chosen to be $<_{\lex}$. In Examples~\ref{ex:Lyndon-left-Lazard} and~\ref{ex:Lyndon-right-Lazard}, we illustrate the left and right Lazard constructions of the Lyndon words. For other examples of Lazard sets, see, for instance~\cite{Viennot1978} and, more recently,~\cite{Perrin-Reutenauer2018}. 

\begin{example}
\label{ex:Lyndon-left-Lazard}
We illustrate the fact that the binary Lyndon words form a left Lazard set. We compute the binary Lyndon words up to length $5$ thanks to the left Lazard construction of Definition~\ref{def:Lazard}. Let $A_1=\{{\tt 0,1}\}$ and, for $i\ge1$, define $A_{i+1} = a_i^* (A_i\setminus\{a_i\}) $ with $a_i = \min_{<_{\lex}} (A_i  \cap \{{\tt 0,1}\}^{\le 5})$. We see in Table~\ref{table:Lyndon-Lazard} that the procedure from Definition~\ref{def:Lazard} halts after 14 steps. The binary Lyndon words of length at most 5 are exactly $a_1, \ldots , a_{14}$. Note that  the left Lazard procedure yields the Lyndon words in the increasing lexicographic order.
\begin{table}[h] 
\centering
\[
	\begin{array}{c || l | l|| l | l}
	i 	& A_i \cap \{{\tt0,1}\}^{\le 5} 		& a_i 		& B_i \cap \{0,1\}^{\le 5} 			& b_i 		\\
	\hline 
	1 	& \{{\tt 0,1}\} 						& {\tt 0} 		& \{{\tt 0,1}\} 						& {\tt 1} 		\\
	2 	& \{{\tt 1, 01, 001, 0001, 00001}\} 	& {\tt 00001} 	& \{{\tt 0, 01, 011, 0111, 01111}\} 	& {\tt 01111} 	\\	
	3 	& \{{\tt 1, 01, 001, 0001}\} 			& {\tt 0001} 	& \{{\tt 0, 01, 011, 0111}\} 			& {\tt 0111} 	\\
	4 	& \{{\tt 1, 00011, 01, 001}\}			& {\tt 00011} 	& \{{\tt 0, 00111, 01, 011}\}			& {\tt 011} 	\\
	5 	& \{{\tt 1, 01, 001}\} 				& {\tt 001} 	& \{{\tt 0, 0011, 00111, 01, 01011}\} 	& {\tt 01011}	\\
	6 	& \{{\tt 1, 0011, 01, 00101}\} 		& {\tt 00101} 	& \{{\tt 0, 0011, 00111, 01}\} 		& {\tt 01} 	\\
	7 	& \{{\tt 1, 0011, 01}\}  				& {\tt 0011} 	& \{{\tt 0, 001, 00101, 0011, 00111}\} 	& {\tt 00111}	\\
	8 	& \{{\tt 1, 00111, 01}\} 				& {\tt 00111}	& \{{\tt 0, 001, 00101, 0011}\} 		& {\tt 0011} 	\\
	9 	& \{{\tt 1, 01}\}					& {\tt 01}		& \{{\tt 0, 00011, 001, 00101}\} 		& {\tt 00101} 	\\
	10 	& \{{\tt 1, 011, 01011}\} 			& {\tt 01011} 	& \{{\tt 0, 00011, 001} \} 			& {\tt 001} 	\\
	11 	& \{{\tt 1, 011}\} 					& {\tt 011} 	& \{{\tt 0, 0001, 00011}\} 			& {\tt 00011}	\\
	12 	& \{{\tt 1, 0111}\} 					& {\tt 0111} 	& \{{\tt 0, 0001}\} 					& {\tt 0001}	\\
	13 	& \{{\tt 1, 01111}\} 					& {\tt 01111} 	& \{{\tt 0, 00001}\} 					& {\tt 00001} 	\\
	14 	& \{{\tt 1}\} 						& {\tt 1} 		& \{{\tt 0}\} 						& {\tt 0}
	\end{array}
\]
\caption{The Lyndon words form a Viennot set.}
\label{table:Lyndon-Lazard}
\end{table}
\end{example}

\begin{example}
\label{ex:Lyndon-right-Lazard}
Now let us illustrate the fact that the binary Lyndon words also form a right Lazard set. We compute the binary Lyndon words up to length $5$ thanks to the right Lazard construction of Definition~\ref{def:Lazard}. Let $B_1=\{{\tt 0,1}\}$ and, for $i\ge1$, define $B_{i+1} =  (B_i\setminus\{b_i\})b_i^* $ with $b_i = \max_{<_{\lex}} (B_i  \cap \{{\tt 0,1}\}^{\le 5})$. As before, the procedure from Definition~\ref{def:Lazard} ends after 14 steps; see Table~\ref{table:Lyndon-Lazard}. The words $b_1, \ldots, b_{14}$ are exactly the Lyndon words of length at most 5. Observe that, this time, the right Lazard procedure yields the Lyndon words in the decreasing lexicographic order.
\end{example}

\begin{remark}
\label{rem:order}
In view of Definition~\ref{def:Lazard}, there are two ways of thinking of the status of the order of the elements of $F$. First, we can think of the order on $F$ to be induced by some preexisting order on $A^*$. In this case, we first fix some total order $\prec$ on $A^ *$, and then, there are two possibilities: either there is a corresponding (left and/or right) Lazard set, or there is not. Otherwise stated, either the Lazard procedure ends for each length $n\ge 1$, meaning that, for each $n\ge 1$, there exists some $k\ge1$ such that (i) and (ii) hold. In this case, for each length $n$, the left (resp., right) Lazard procedure computes the words of length $n$ in $F$ one by one by outputting the least (resp., greatest) words in $Y_i\cap A^{\le n}$ (with respect to the preexisting order $\prec$ on $A^*$) until it reaches a singleton set $Y_k\cap A^{\le n}$. This is what we did for the Lyndon words in Examples~\ref{ex:Lyndon-left-Lazard} and~\ref{ex:Lyndon-right-Lazard}. We first considered the (increasing) lexicographic order on $\{{\tt 0,1}\}^*$, and then, at each step of the procedure, we outputted the lexicographically least (resp., greatest) word in $A_i\cap\{{\tt 0,1}\}^n$ (resp., $B_i\cap\{{\tt 0,1}\}^n$). 

In fact, we do not need to have a preexisting total order on $A^*$ at our disposal to define a Lazard set $F$. Instead, we can think of the total order $\prec$ on $F$ to be induced by the Lazard process itself. In this case, the choice of the words $u_i$ that are removed from the sets $Y_i$ at each step is what determines the total order on $F$: the fact that $u_i$ is outputted before $u_{i+1}$ implies that $u_i\prec u_{i+1}$ (resp., $u_i\succ u_{i+1}$) for a left (resp., right) Lazard process. In particular, since we always have $A=Y_1\subset F$, this process always induces a total order on the alphabet $A$. However, there is no reason that the total order $\prec$ on $F$ naturally extends to a total order on the free monoid $A^*$. 
\end{remark}

In view of the following result, Lazard sets are sometimes also called {\em Lazard factorizations}.

\begin{theorem}\cite{Viennot1978}
\label{thm:Lazard-implies-complete}
All left (resp., right) Lazard sets with respect to some total order $\prec$ are complete factorizations of the free monoid with respect to the same total order $\prec$.
\end{theorem}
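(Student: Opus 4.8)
The plan is to prove Theorem~\ref{thm:Lazard-implies-complete} by invoking Schützenberger's theorem (Theorem~\ref{thm:Schutzenberger65}): it suffices to establish conditions (i) and (ii) of that theorem, namely that every word admits at least one $F$-factorization and at most one $F$-factorization; completeness then follows. By symmetry (mirror image reversal of words), it is enough to treat the left Lazard case. Fix $n\ge 1$ and write $F\cap A^{\le n}=\{u_1,\ldots,u_k\}$ with $u_1\prec\cdots\prec u_k$, together with the associated sets $Y_1=A\supseteq Y_2\supseteq\cdots$ where $Y_{i+1}=u_i^*(Y_i\setminus\{u_i\})$; by hypothesis $u_i\in Y_i$ for each $i$ and $Y_k\cap A^{\le n}=\{u_k\}$.

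The key technical step is to track, step by step, how the $Y_i$-factorizations of a fixed word $w$ with $|w|\le n$ relate to the $Y_{i+1}$-factorizations. First I would record the elementary fact that each $Y_i$ is a prefix code (by induction: $Y_1=A$ is trivially one, and if $Y_i$ is a prefix code then so is $u_i^*(Y_i\setminus\{u_i\})$, since a word of the latter set is a nonempty run of $u_i$'s followed by an element of $Y_i\setminus\{u_i\}$, and the factorization into such a form is forced by the prefix-code property of $Y_i$ plus $u_i\in Y_i$). Being a prefix code, each $Y_i$ is in particular a code, so $Y_i$-factorizations (without any order constraint) are unique when they exist. The crucial claim is then: for every word $w$, $w$ admits a (unique) factorization over $Y_i$ if and only if it admits a (unique) factorization over $Y_{i+1}$, and moreover the $Y_{i+1}$-factorization is obtained from the $Y_i$-factorization by grouping each maximal block $u_i^a v$ (with $a\ge 0$, $v\in Y_i\setminus\{u_i\}$, $a+|v|\ge 1$ forcing $v\ne\varepsilon$ unless we are at the tail) — equivalently the two factorizations determine each other. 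To make this rigorous I would argue: given a $Y_i$-factorization $(z_1,\ldots,z_m)$ of $w$, scan left to right and greedily absorb maximal runs $z_j=\cdots=z_{j+a-1}=u_i$ into the following letter-block $z_{j+a}\in Y_i\setminus\{u_i\}$; this is well-defined precisely because $A^{\le n}\cap Y_i$ being finite and $Y_k\cap A^{\le n}=\{u_k\}$ guarantee (by downward induction from $i=k$) that a tail of pure $u_i$'s cannot occur when $|w|\le n$ — except harmlessly at stage $k$ where $Y_k\cap A^{\le n}=\{u_k\}$ itself. Conversely a $Y_{i+1}$-factorization expands uniquely into a $Y_i$-factorization by splitting each $u_i^a v$ back into $a$ copies of $u_i$ followed by $v$.

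With this claim in hand the proof closes quickly. Every word $w$ with $|w|\le n$ has the trivial factorization over $Y_1=A$ (into its letters), so by induction on $i$ it has a factorization over $Y_k\cap A^{\le n}=\{u_k\}$ — but a word over the singleton $\{u_k\}$ is a power of $u_k$; tracing the absorptions backwards, the original letter-factorization of $w$ reassembles into a factorization $w=v_1v_2\cdots v_r$ where each $v_t\in F\cap A^{\le n}$, and the block structure records which stage $i$ each $v_t$ was created at, i.e. $v_t=u_{i(t)}$. The order in which blocks are emitted along the scan shows $i(1)\le i(2)\le\cdots\le i(r)$ would need to be refined: in fact one reads off that the sequence of indices is $\prec$-nondecreasing in the reverse order, giving precisely $v_1\succeq\cdots\succeq v_r$ after identifying each $v_t$ with its index — this bookkeeping is where I expect to be most careful, and it is the heart of why a \emph{left} Lazard construction produces factorizations with the factors in $\succeq$ order. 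Uniqueness at each stage (prefix codes are codes) propagates down to uniqueness of the $F\cap A^{\le n}$-factorization, and since $|w|\le n$ was arbitrary and the $F$-factorization of $w$ only involves factors of length $\le|w|\le n$, we conclude that every $w\in A^*$ has exactly one $F$-factorization. Thus conditions (i) and (ii) of Theorem~\ref{thm:Schutzenberger65} hold, whence $F$ is a complete factorization of $A^*$ with respect to $\prec$.

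The main obstacle is the inductive bookkeeping in the previous paragraph: making precise the bijection between letter-level factorizations over $Y_1$ and the final $F$-factorization, in particular showing the absorption process is always well-defined (no illegal pure-$u_i$ tail before stage $k$) and that it yields the factors in $\succeq$ order. Everything else — the prefix-code property of the $Y_i$, the reduction to the left case by reversal, and the final appeal to Schützenberger's theorem — is routine.
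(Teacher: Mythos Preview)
The paper does not prove this theorem; it is quoted from Viennot~\cite{Viennot1978} and used as a black box. So there is no ``paper's own proof'' to compare against, and your proposal should be judged on its own merits.

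On those merits, there is a genuine gap. Your central claim---that for every $w\in A^{\le n}$ the $Y_i$-factorization of $w$ exists for all $i$, and in particular that $w$ has a factorization over $Y_k\cap A^{\le n}=\{u_k\}$, hence is a power of $u_k$---is simply false. Already $w=u_1$ (a single letter) has no $Y_2$-factorization: every element of $Y_2=u_1^*(A\setminus\{u_1\})$ ends in a letter different from $u_1$, so no concatenation of such elements can equal $u_1$. More generally, any word whose $Y_i$-factorization ends in $u_i$ fails to factor over $Y_{i+1}$. You flag ``no illegal pure-$u_i$ tail before stage $k$'' as a bookkeeping obstacle, but it is not a matter of bookkeeping: such tails \emph{do} occur, and your scheme as written breaks down on them. (Incidentally, the sets $Y_i$ are not nested, contrary to your $Y_1\supseteq Y_2\supseteq\cdots$.)

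The standard repair is not to forbid trailing $u_i$'s but to \emph{peel them off}. Maintain a suffix $s$ of already-produced $F$-factors (initially empty) and a prefix $w^{(i)}$ with a $Y_i$-factorization. At stage $i$, write the $Y_i$-factorization of $w^{(i)}$ as $(z_1,\ldots,z_{m-a},\underbrace{u_i,\ldots,u_i}_{a})$ with $z_{m-a}\ne u_i$ (possibly $m=a$), append $u_i^a$ to the left of $s$, and set $w^{(i+1)}=z_1\cdots z_{m-a}$. Then $w^{(i+1)}$ genuinely has a $Y_{i+1}$-factorization (its last $Y_i$-block is not $u_i$, so the greedy grouping succeeds), and at stage $k$ the condition $Y_k\cap A^{\le n}=\{u_k\}$ forces $w^{(k)}=u_k^{a_k}$. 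This yields $w=u_k^{a_k}u_{k-1}^{a_{k-1}}\cdots u_1^{a_1}$, which is an $F$-factorization in the required $\succeq$ order; uniqueness follows from the prefix-code property of each $Y_i$, which you argued correctly. Two further minor points: your appeal to Theorem~\ref{thm:Schutzenberger65} is superfluous, since establishing (i) and (ii) \emph{is} the definition of a complete factorization; and the reduction of the right case to the left by reversal is fine but should come with the remark that the order on the reversed set is the transported one.
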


However, it is not true that all complete factorizations of the free monoid can be obtained by the Lazard procedure. The following result of Viennot characterizes the Lazard sets among the complete factorizations of the free monoid.

\begin{theorem}\cite{Viennot1978}
\label{thm:Viennot}
Let $F$ be a complete factorization of $A^{*}$ with respect to some total order $\prec$. Then $F$ is a left (resp., right) Lazard set with respect to $\prec$ if and only if for all $f,g \in F$, $fg \in F$ implies $f \prec fg$ (resp.,  $g \succ fg$).
\end{theorem}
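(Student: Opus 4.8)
The plan is to prove the left-Lazard biconditional in full and then obtain the right-Lazard one by a symmetry argument. If $\widetilde{w}$ denotes the mirror image of a word $w$ and $\widetilde{F}=\{\widetilde{f}:f\in F\}$ is ordered by declaring $\widetilde{f}$ to precede $\widetilde{g}$ exactly when $g\prec f$, then $\widetilde{F}$ is a complete factorization of $A^{*}$ for this reversed order, the left-Lazard construction for $\widetilde{F}$ is the mirror image of the right-Lazard construction for $F$, and the condition ``$ab\in\widetilde{F}$ implies $a$ precedes $ab$'' for all $a,b\in\widetilde{F}$ translates verbatim into ``$fg\in F$ implies $g\succ fg$'' for all $f,g\in F$. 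So it suffices to prove: a complete factorization $F$ of $A^{*}$ with respect to $\prec$ is a left Lazard set if and only if $fg\in F$ implies $f\prec fg$ for all $f,g\in F$. Fix $n\ge 1$, write $F\cap A^{\le n}=\{u_{1}\prec\cdots\prec u_{k}\}$, and let $(Y_{i})$ be the associated left-Lazard sets when they are defined; recall each $Y_{i}$ is prefix-free, and $Y_{1}=A\subseteq F$ since the only $F$-factorization of a letter is itself.

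For the forward implication, assume $F$ is a left Lazard set and take $f,g,fg\in F$; I run the construction at level $n:=|fg|$. Then $fg=u_{j}$ for some $j$ and $u_{j}\in Y_{j}$ by condition (i), while $f$, being a proper prefix of $u_{j}$ (as $g\neq\varepsilon$), does not lie in the prefix-free set $Y_{j}$. Suppose for contradiction $f=u_{i}$ with $i\ge j$; since $|f|<|u_{j}|$ this forces $i>j$, hence $j<k$. A short induction shows that for every $l$ with $j<l\le k$ the set $Y_{l}$ contains a word having $u_{j}$ as a proper prefix (start from $u_{j}v$ with $v\in Y_{l}\setminus\{u_{l}\}$ at $l=j+1$, and propagate through $Y_{l}\to Y_{l+1}=u_{l}^{*}(Y_{l}\setminus\{u_{l}\})$, prepending $u_{l}$ when necessary). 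A second induction then gives $f\notin Y_{l}$ for all $j\le l\le i$: this is known at $l=j$, and if $f=u_{l}^{m}v\in Y_{l+1}$ with $v\in Y_{l}\setminus\{u_{l}\}$ then either $m=0$ and $f=v\in Y_{l}$, or $m\ge 1$ and $u_{l}$ is a proper prefix of $f$ — impossible by length when $l=j$, and when $l>j$ it makes $u_{l}$ a proper prefix of the word furnished above that lies in $Y_{l}$, contradicting prefix-freeness. This contradicts $f=u_{i}\in Y_{i}$, so $i<j$, that is, $f\prec fg$.

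The converse is the substantial half, and one cannot invoke Theorem~\ref{thm:Lazard-implies-complete} here since it runs the other way; instead one must verify directly that, for each $n$, the construction is well defined, halts after exactly $k=\#(F\cap A^{\le n})$ steps, and satisfies (i) and (ii). The plan is to prove by induction on $i$ a single invariant that simultaneously yields $Y_{i}\cap A^{\le n}\neq\emptyset$ with $\prec$-least element $u_{i}$ (hence (i)) and a description of $Y_{i}$ in terms of $F$-factorizations: $Y_{i}$ should consist exactly of the words obtained, from an $F$-factorization all of whose factors are $\succeq u_{i}$ (equivalently, avoid $u_{1},\dots,u_{i-1}$), by reinserting maximal prefix-blocks $u_{1}^{a_{1}}u_{2}^{a_{2}}\cdots u_{i-1}^{a_{i-1}}$ in the canonical Lazard order. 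The step $Y_{i+1}=u_{i}^{*}(Y_{i}\setminus\{u_{i}\})$ must then be matched, on the level of $F$-factorizations, with pulling off leading occurrences of $u_{i}$; this is exactly where the hypothesis $fg\in F\Rightarrow f\prec fg$ is used, namely to show that prepending a power of $u_{i}$ to an $F$-element $v$ with $v\succ u_{i}$ again produces a single $F$-element, so that the operation keeps $Y_{i+1}\subseteq F$ and creates no uncontrolled short words. Granting the invariant, condition (ii) follows at once: after $u_{1},\dots,u_{k-1}$ are eliminated, the only un-eliminated element of $F\cap A^{\le n}$ is $u_{k}$, and reinserting the blocks of the already-removed (short) words can only lengthen words, so $Y_{k}\cap A^{\le n}=\{u_{k}\}$. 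The main obstacle — and the place where the hypothesis is genuinely needed rather than cosmetic — is formulating and proving this invariant, in particular controlling how $F$-factorizations behave under multiplication by $u_{i}^{m}$ and under stripping a prefix power of $u_{i}$, while carrying the prefix-freeness of the $Y_{i}$ along the induction.
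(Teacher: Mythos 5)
The paper does not prove Theorem~\ref{thm:Viennot}: it is quoted from Viennot's work with a citation only, so there is no in-paper argument to compare yours against and your attempt must be judged on its own. Your reduction of the right-Lazard case to the left-Lazard case (reversal of words combined with reversal of the order) is sound, and your forward implication (left Lazard $\Rightarrow$ ``$fg\in F$ implies $f\prec fg$'') is essentially complete and correct: running the elimination at level $n=|fg|$, using prefix-freeness of the $Y_l$ and the nonemptiness of $Y_l\setminus\{u_l\}$ for $l<k$, your two nested inductions do show that $f$, once excluded from $Y_j$, can never reappear in any later $Y_l$, which forces $f=u_i$ with $i<j$.

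The converse, however, is not proved. You yourself call it ``the substantial half'' and close by saying that the main obstacle is ``formulating and proving this invariant.'' What you supply is a description of what the invariant ought to say (each $Y_i$ arises from $F$-factorizations avoiding $u_1,\dots,u_{i-1}$ by reattaching prefix blocks $u_1^{a_1}\cdots u_{i-1}^{a_{i-1}}$), not a precise statement, and none of the verifications that would constitute the proof are carried out: that the hypothesis $fg\in F\Rightarrow f\prec fg$ guarantees $u_i\in Y_i$ at every step; that $u_i$ is the $\prec$-least element of $Y_i\cap A^{\le n}$, so that the elimination order agrees with the given order on $F$; and that after $k$ steps one is left with exactly $\{u_k\}$. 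These are precisely the places where the hypothesis must do its work, and asserting that condition (ii) ``follows at once'' from an unformulated and unproved invariant is not a proof. As it stands, the proposal establishes only one of the two implications of the equivalence.
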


From Theorem~\ref{thm:Viennot}, one can deduce a characterization of Viennot sets. 

\begin{corollary} 
\label{cor:Viennot-fact}
Let $F$ be a complete factorization of the free monoid $A^{*}$ with respect to some total order $\prec$. Then $F$ is a Viennot set if and only if, for all $f,g \in F$, $fg \in F$ implies $f \prec fg \prec g$.
\end{corollary}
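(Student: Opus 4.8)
The statement to prove is Corollary~\ref{cor:Viennot-fact}: a complete factorization $F$ of $A^*$ with respect to $\prec$ is a Viennot set if and only if, for all $f,g\in F$, $fg\in F$ implies $f\prec fg\prec g$.

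\medskip

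\noindent\textbf{Plan.} The plan is to derive this immediately from Theorem~\ref{thm:Viennot} by the simple observation that a Viennot set is by definition both a left and a right Lazard set, and that Theorem~\ref{thm:Viennot} characterizes each of these two properties separately by a ``productivity'' condition on pairs $f,g\in F$ with $fg\in F$. First I would recall that, since $F$ is assumed to be a complete factorization of $A^*$ with respect to $\prec$, Theorem~\ref{thm:Viennot} applies verbatim: $F$ is a left Lazard set (w.r.t.\ $\prec$) iff for all $f,g\in F$ with $fg\in F$ one has $f\prec fg$, and $F$ is a right Lazard set (w.r.t.\ $\prec$) iff for all such $f,g$ one has $g\succ fg$, i.e.\ $fg\prec g$.

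\medskip

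\noindent\textbf{Forward direction.} Suppose $F$ is a Viennot set. By Definition~\ref{def:Lazard}, this means $F$ is simultaneously a left Lazard set and a right Lazard set with respect to $\prec$. Applying the left-Lazard half of Theorem~\ref{thm:Viennot} gives $f\prec fg$ whenever $f,g\in F$ and $fg\in F$; applying the right-Lazard half of the same theorem gives $fg\prec g$ under the same hypotheses. Combining, $f\prec fg\prec g$, which is the claimed implication.

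\medskip

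\noindent\textbf{Reverse direction.} Conversely, suppose that for all $f,g\in F$ with $fg\in F$ we have $f\prec fg\prec g$. In particular $f\prec fg$ always holds, so by Theorem~\ref{thm:Viennot} $F$ is a left Lazard set w.r.t.\ $\prec$; and $fg\prec g$ always holds, so by Theorem~\ref{thm:Viennot} $F$ is a right Lazard set w.r.t.\ $\prec$. Being both, $F$ is a Viennot set by Definition~\ref{def:Lazard}. This closes the equivalence. There is essentially no obstacle here: the only thing to be careful about is that Theorem~\ref{thm:Viennot} requires $F$ to already be a complete factorization with respect to $\prec$, which is part of our hypothesis, so both applications of that theorem are legitimate. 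Thus the proof is a two-line deduction once Theorem~\ref{thm:Viennot} and the definition of a Viennot set are in hand.
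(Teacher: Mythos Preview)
Your proof is correct and follows exactly the approach the paper intends: the corollary is stated without proof in the paper, introduced by the sentence ``From Theorem~\ref{thm:Viennot}, one can deduce a characterization of Viennot sets,'' and your argument is precisely that deduction---combining the left and right halves of Theorem~\ref{thm:Viennot} via Definition~\ref{def:Lazard}.
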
 

\begin{example}
Thanks to Viennot's characterization, we see once again that Lyndon words are a Viennot set with respect to the (increasing)  lexicographic order $<_{\lex}$. Indeed, for all Lyndon words $f$ and $g$, if $fg$ is also Lyndon, then $f <_{\lex} fg <_{\lex} g$ by Theorem~\ref{thm:lex-suffix-Lyndon}. Observe that the first inequality directly follows from the definition of the lexicographic order, and is thus  verified by any words $f$ and $g$. As for the second inequality, it is valid for Lyndon words only. 
\end{example}

\subsection{Nyldon words form a right Lazard factorization}

In the following, we show that the Nyldon words form a right Lazard factorization, but not a left one. Consequently, the set of Nyldon words is not Viennot.

\begin{proposition}
\label{prop:Nyldon-not-left-Lazard}
The set $\Nyl$ of Nyldon words is not a left Lazard factorization. 
\end{proposition}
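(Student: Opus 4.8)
By Theorem~\ref{thm:Viennot}, since the Nyldon words form a complete factorization of $A^*$ with respect to $>_{\lex}$ (Example~\ref{ex:complete-Lyndon}), the set $\Nyl$ is a \emph{left} Lazard factorization with respect to $>_{\lex}$ if and only if for all $f,g\in\Nyl$ with $fg\in\Nyl$ one has $f\prec fg$, where $\prec$ is the order $>_{\lex}$; that is, if and only if $f>_{\lex}fg$ whenever $f,g,fg\in\Nyl$. But $fg$ has $f$ as a prefix, so by the very definition of the lexicographic order we always have $f\le_{\lex}fg$, hence $f<_{\lex}fg$ (strict, since $g\ne\varepsilon$), i.e.\ $f\not>_{\lex}fg$. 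Therefore the plan is simply to exhibit one concrete pair of Nyldon words $f,g$ whose concatenation $fg$ is again Nyldon: this single example immediately violates the criterion of Theorem~\ref{thm:Viennot} and shows $\Nyl$ is not a left Lazard factorization.

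\textbf{Finding the witness.} The standard factorization developed in Section~\ref{sec:standard-fac} is exactly the tool for producing such pairs: if $w$ is Nyldon with standard factorization $(p,s)$, then $p\in\Nyl$, $s\in\Nyl$, and $pg\in\Nyl$ trivially with $g=s$. So it suffices to name any Nyldon word of length at least $3$ together with its standard factorization. From Table~\ref{table:first-Lyndon-Nyldon}, the word $w={\tt 1011}$ is binary Nyldon; its longest Nyldon proper suffix is $s={\tt 1}$ and the remaining prefix is $p={\tt 101}$, which is itself Nyldon. Thus $f={\tt 101}$, $g={\tt 1}$ are both in $\Nyl$ and $fg={\tt 1011}\in\Nyl$, while $f={\tt 101}<_{\lex}{\tt 1011}=fg$, so $f\not>_{\lex}fg$. (Equivalently one may take $f={\tt 10}$, $g={\tt 0}$, noting ${\tt 100}\in\Nyl$.) This provides the required counterexample, and by Theorem~\ref{thm:Viennot} we conclude that $\Nyl$, ordered by $>_{\lex}$, is not a left Lazard factorization.

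\textbf{Where the difficulty lies.} There is essentially no obstacle here: once Theorem~\ref{thm:Viennot} and the standard factorization (Theorem~\ref{thm:standard-fac}) are in hand, the statement is a one-line verification on a single small example. The only point requiring a moment's care is the logical direction of Theorem~\ref{thm:Viennot}: one must remember that the ambient order is $\prec\ =\ >_{\lex}$, so the "left Lazard'' condition "$f\prec fg$'' translates to "$f>_{\lex}fg$'', which is the condition that fails — not "$f<_{\lex}fg$'', which always holds. With that translation made explicit, the proof is complete. One could alternatively argue directly from Definition~\ref{def:Lazard} by running the left Lazard procedure on the binary Nyldon words in increasing-$>_{\lex}$ order (i.e.\ decreasing lexicographic order) and observing that it fails to produce a singleton, but invoking Viennot's characterization is cleaner and is the route I would take.
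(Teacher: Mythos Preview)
Your argument is correct and matches the paper's first proof: both invoke Viennot's characterization (Theorem~\ref{thm:Viennot}) and observe that the left-Lazard condition $f\prec fg$ translates, under $\prec\;=\;>_{\lex}$, to $f>_{\lex}fg$, which is impossible since $f$ is a proper prefix of $fg$. You spell out a concrete witness pair, whereas the paper simply writes ``a contradiction'' (any Nyldon word of length $\ge 2$, via its standard factorization, already supplies such a pair, so no explicit example is needed).

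The one sentence the paper includes that you omit is the preliminary reduction: if $(\Nyl,\prec)$ were a left Lazard set for \emph{some} total order $\prec$, then by Theorem~\ref{thm:Lazard-implies-complete} it would be a complete factorization of $A^*$ with respect to $\prec$, and together with Theorem~\ref{thm:unicity} this forces $\prec\;=\;>_{\lex}$. Without this line your argument formally rules out only the order $>_{\lex}$, not every possible order, so strictly speaking you have proved a slightly weaker statement than the proposition. Adding that sentence at the start completes the proof. The paper also offers a second, direct proof that runs the left Lazard procedure with respect to $>_{\lex}$ and observes it outputs the non-Nyldon word ${\tt mm0}$; this is the alternative route you allude to at the end.
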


\begin{proof}[First proof of Proposition~\ref{prop:Nyldon-not-left-Lazard}]
Proceed by contradiction and suppose that $\Nyl$ is a left Lazard factorization associated with some order $\prec$. In view of Theorems~\ref{thm:unicity} and~\ref{thm:Lazard-implies-complete}, the order $\prec$ on $\Nyl$ must coincide with the decreasing lexicographic order $>_{\lex}$. Thanks to   Theorem~\ref{thm:Viennot}, for all $f,g \in \Nyl$ such that $fg \in \Nyl$, we should have $f >_{\lex} fg$, a contradiction.
\end{proof}

Let us now give another proof of Proposition~\ref{prop:Nyldon-not-left-Lazard}, which is not based on Viennot's characterization.

\begin{proof}[Second proof of Proposition~\ref{prop:Nyldon-not-left-Lazard}]
If $\Nyl$ were a left Lazard factorization associated with some order $\prec$, then, similarly to the previous proof, we know that the only possible choice for the order $\prec$ on $\Nyl$ would be $>_{\lex}$. Let us show that the set $\Nyl$ cannot be obtained thanks to the left Lazard procedure with respect to  $>_{\lex}$. Without loss of generality, we suppose that $A=\{{\tt 0,1,\ldots,{\tt m}}\}$, with ${\tt 0<_{\lex} 1<_{\lex}\cdots <_{\lex}  \tt m}$. Suppose instead that we can produce the Nyldon words in the decreasing lexicographic order by applying the left Lazard procedure. Then, with the notation of Definition~\ref{def:Lazard}, for any length $n$, we must have $Y_1= A$ and $u_1=\min_{>_{\lex}} Y_1=\max_{<_{\lex}} Y_1= {\tt m}$. Then $Y_2 =  {\tt m}^* \{{\tt 0,1},\ldots,{\tt m-1}\}$. If $n\ge 3$, then the word ${\tt mm0}$ must eventually be outputted by the procedure. But ${\tt mm0}$ is not Nyldon by Proposition~\ref{prop:prefixes}. We have thus reached a contradiction.
\end{proof}

Next, our aim is to show that the Nyldon words form a right Lazard set. This  essentially follows from Theorem~\ref{thm:unicity} and Theorem~\ref{thm:Viennot}. 

\begin{theorem}\label{thm:Nyldon-right-Lazard}
The Nyldon words equipped with the decreasing lexicographic order $>_{\lex}$ form a right Lazard set.
\end{theorem}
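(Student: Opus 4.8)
The plan is to invoke Viennot's characterization of Lazard sets among complete factorizations, namely Theorem~\ref{thm:Viennot}. By Example~\ref{ex:complete-Lyndon} (or directly Theorem~\ref{thm:unicity}), the set $\Nyl$ of Nyldon words equipped with the order $>_{\lex}$ is a complete factorization of $A^*$. Hence, to conclude that $\Nyl$ is a right Lazard set with respect to $>_{\lex}$, it suffices to verify the condition in the right-hand version of Theorem~\ref{thm:Viennot}: for all $f,g\in\Nyl$, if $fg\in\Nyl$ then $g>_{\lex} fg$ (written with $\succ$ being $>_{\lex}$, i.e.\ $\prec$ being $<_{\lex}$, the condition $g\succ fg$ reads $g>_{\lex} fg$). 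So the whole proof reduces to this single implication.

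First I would take $f,g\in\Nyl$ with $fg\in\Nyl$ and aim to show $g>_{\lex} fg$. The key observation is that $g$ is a proper suffix of the Nyldon word $fg$ (it is proper because $f\in\Nyl\subseteq A^+$ is nonempty), and $g$ is itself Nyldon. Therefore Theorem~\ref{thm:suffix} applies directly and yields $g>_{\lex} fg$. That is exactly the inequality we need. There is essentially nothing else to do: the hypothesis of Theorem~\ref{thm:Viennot} (completeness of the factorization) is already in hand, and its conclusion gives that $\Nyl$ is a right Lazard set with the order $>_{\lex}$.

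I would also mention, for completeness, that one could give a self-contained argument avoiding Theorem~\ref{thm:Viennot}: one can check directly that the right Lazard construction of Definition~\ref{def:Lazard}, run with respect to $>_{\lex}$, outputs exactly the Nyldon words in decreasing lexicographic order. This would amount to showing, by induction on the steps of the construction, that at each stage the lexicographically greatest word of the appropriate length remaining in $Y_i$ is the next Nyldon word, using Theorem~\ref{thm:longest-Nyldon-suffix} and Proposition~\ref{prop:longest-Nyldon-suffix} to control how suffixes interact with the sets $Y_{i+1}=(Y_i\setminus\{u_i\})u_i^*$. However, since Theorem~\ref{thm:Viennot} is available, the short route above is preferable.

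The main (and only) obstacle is making sure the direction of the inequalities is tracked correctly: the Lazard/Viennot framework is stated for a generic order $\prec$, and here $\prec$ is the \emph{decreasing} lexicographic order, so the right Lazard condition ``$g\succ fg$'' translates into ``$g>_{\lex} fg$'', which is precisely the content of Theorem~\ref{thm:suffix} for the Nyldon proper suffix $g$ of $fg$. Once this bookkeeping is done, the proof is immediate. In short: Nyldon words form a complete factorization for $>_{\lex}$ by Theorem~\ref{thm:unicity}; Theorem~\ref{thm:suffix} verifies Viennot's right-Lazard criterion; apply Theorem~\ref{thm:Viennot}.
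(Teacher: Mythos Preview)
Your approach is exactly the paper's: establish that $\Nyl$ is a complete factorization with respect to $>_{\lex}$ via Theorem~\ref{thm:unicity}, then invoke Theorem~\ref{thm:suffix} to verify Viennot's right-Lazard criterion from Theorem~\ref{thm:Viennot}. However, you have the direction of the inequalities reversed throughout. The order $\prec$ on $\Nyl$ is the \emph{decreasing} lexicographic order $>_{\lex}$, so $u\prec v$ means $u>_{\lex}v$ and hence $u\succ v$ means $u<_{\lex}v$ (this is consistent with Definition~\ref{def:complete-fac}: the $F$-factorization condition $f_1\succeq\cdots\succeq f_k$ must become $n_1\le_{\lex}\cdots\le_{\lex}n_k$ for Nyldon words). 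The right-Lazard condition $g\succ fg$ therefore reads $g<_{\lex}fg$, not $g>_{\lex}fg$. Correspondingly, Theorem~\ref{thm:suffix} states that a Nyldon proper suffix $s$ of a Nyldon word $w$ satisfies $s<_{\lex}w$, so applied to $s=g$ and $w=fg$ it yields $g<_{\lex}fg$, not $g>_{\lex}fg$ as you wrote. Once these sign slips are corrected (they are the same error made twice, so the logical chain survives), your argument coincides with the paper's proof verbatim. Ironically, the ``main (and only) obstacle'' you flagged is precisely where the slip occurred.
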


\begin{proof}
On the one hand, we know from Theorem~\ref{thm:unicity} that the Nyldon words form a complete factorization of the free monoid $A^{*}$ with respect to the decreasing lexicographic order $>_{\lex}$. On the other hand, Theorem~\ref{thm:suffix} tells us that,  if $w$ is a Nyldon word and if $s$ is a Nyldon proper suffix of $w$, then $w>_{\lex} s$. In particular, if $f$ and $g$ are Nyldon words such that $fg$ is also Nyldon, then we have $g<_{\lex}fg$. Thus, we obtain from Theorem~\ref{thm:Viennot} that the Nyldon words ordered by the decreasing lexicographic order $>_{\lex}$ form a right Lazard set.
\end{proof} 

\begin{example}
\label{ex:Nyldon-right-Lazard}
We illustrate that the binary Nyldon words can be obtained as a right Lazard set. Table~\ref{table:Nyldon-right-Lazard} shows the sets $Y_i$ and the words $u_i$ of Definition~\ref{def:Lazard} corresponding to $n=5$. 
Note that the Nyldon words are produced in increasing lexicographic order by the right Lazard procedure, since by definition, they are produced in decreasing order with respect to the decreasing lexicographic order $>_{\lex}$.
Thus, at each step we must have  $u_i=\max_{>_{\lex}} (Y_i\cap \{{\tt 0,1}\}^{\le 5})=\min_{<_{\lex}} (Y_i\cap \{{\tt 0,1}\}^{\le 5})$.
\begin{table}[h]
\centering
\[
	\begin{array}{c || l | l}
	i 	& Y_i \cap \{{\tt 0,1}\}^{\le 5} 												& u_i \\
	\hline 
	1 	& \{{\tt 0,1}\} 																& {\tt 0} \\
	2 	& \{{\tt 1, 10, 100, 1000, 10000}\} 											& {\tt 1} \\
	3 	& \{{\tt 10, 101, 1011, 10111, 100, 1001, 10011, 1000, 10001, 10000}\}  				& {\tt 10} \\
	4 	& \{{\tt 101, 10110, 1011, 10111, 100, 10010, 1001, 10011, 1000, 10001, 10000}\}  	& {\tt 100}\\
	5 	& \{{\tt 101, 10110, 1011, 10111, 10010, 1001, 10011, 1000, 10001, 10000}\} 		& {\tt 1000}\\
	6 	& \{{\tt 101, 10110, 1011, 10111, 10010, 1001, 10011, 10001, 10000}\} 				& {\tt 10000}\\
	7 	& \{{\tt 101, 10110, 1011, 10111, 10010, 1001, 10011, 10001}\}  					& {\tt 10001}\\
	8 	& \{{\tt 101, 10110, 1011, 10111, 10010, 1001, 10011}\} 							& {\tt 1001}\\
	9 	& \{{\tt 101, 10110, 1011, 10111, 10010, 10011}\} 								& {\tt 10010} \\
	10 	& \{{\tt 101, 10110, 1011, 10111, 10011}\}  										& {\tt 10011}\\
	11 	& \{{\tt 101, 10110, 1011, 10111}\}  											& {\tt 101} \\
	12 	& \{{\tt 10110, 1011, 10111}\} 						 						& {\tt 1011} \\
	13 	& \{{\tt 10110, 10111}\} 				 									& {\tt 10110}\\
	14 	& \{{\tt 10111}\} 															& {\tt 10111}\\
	\end{array}
\]
\caption{The Nyldon words form a right Lazard factorization.}
\label{table:Nyldon-right-Lazard}
\end{table}
\end{example}

\begin{remark}
We used the unicity of the Nyldon factorization (that is, Theorem~\ref{thm:unicity}) to obtain that the set of Nyldon words is a right Lazard set. Indeed, in order to be able to apply Viennot's characterization (that is, Theorem~\ref{thm:Viennot}), we need to first know that the set of Nyldon words form a complete factorization of the free monoid.
\end{remark}

As is well known, with any (right or left) Lazard set is associated a basis of the free Lie algebra \cite{Viennot1978,Reutenauer1993}. As a consequence of Theorem~\ref{thm:Nyldon-right-Lazard}, Nyldon words can be used in order to obtain a new basis of the free Lie algebra.

It is interesting to compare the effectiveness of the Lazard procedure for computing the Lyndon words and the Nyldon words. In fact, it seems that the Nyldon words are produced much faster by the Lazard procedure than the Lyndon words.
For example, in order to compute the $14$ Lyndon words up to length $5$, both the left and right Lazard procedures actually need $13$ steps whereas all $14$ Nyldon words of length at most $5$ are already computed at the fourth step. 
Indeed, on the one hand, in Table~\ref{table:Nyldon-right-Lazard}, we see that $\Nyl\cap\{{\tt 0,1}\}^{\le 5}\subset Y_4$. 
On the other hand, we see in Table~\ref{table:Lyndon-Lazard} that the word $a_{13}={\tt 01111}$ belongs to $A_{13}\setminus A_{12}$ and, similarly, the word $b_{13}=00001$ belongs to $B_{13}\setminus B_{12}$. 

In general, if $k=\#(\Lyn\cap A^{\le n})$, then both the left and right Lazard procedures need $k-1$ steps in order to compute all the Lyndon words of length up to $n$. More precisely, the penultimate outputted words by the left and right Lazard procedures are such that $a_{k-1}\in A_{k-1} \setminus \cup_{i=1}^{k-2}A_i$ and $b_{k-1}\in B_{k-1} \setminus \cup_{i=1}^{k-2}B_i$ respectively. Let us give more details in the case of the left Lazard procedure, the right case being symmetric. Since the left Lazard procedure outputs the Lyndon words in increasing lexicographic order, we know that $a_{k-2}={\tt 01}^{n-2}\in A_{k-2}$ and $a_{k-1}={\tt 01}^{n-1}\in A_{k-1}$. Now, because the sets $A_i$ are prefix-free and $a_{k-2}$ is a prefix of $a_{k-1}$, we obtain that $a_{k-1}\notin\cup_{i=1}^{k-2}A_i$. However, for a given $n$, the sets $A_i$ and $B_i$ are in general not of the same size; see, for instance, Table~\ref{table:Lyndon-Lazard}. 

We leave it as an open problem to characterize the number of steps actually needed by the right Lazard procedure in order to produce all Nyldon words up to length $n$.

\begin{open}
Let $n\ge 1$ and let $(Y_i)_{i\ge 1}$ the sequence of sets defined by $Y_1=A$ and, for $i\ge1$, $Y_{i+1}=(Y_i\setminus \{u_i\})u_i^*$, where $\Nyl\cap A^{\le n}=\{u_1,\ldots,u_k\}$ with $u_1<_{\lex}\cdots <_{\lex} u_k$. Characterize the least integer $j\ge 1$ such that $Y_j\cap A^{\le n}$ contains all Nyldon words over $A$ up to length $n$.
\end{open}

\begin{remark}
For each Lazard set $F\subseteq A^+$ associated with some total order $\prec$, there exists a total order $<_F$ on $A^*$ extending $\prec$ and such that an arbitrary word $w$ over $A$ belongs to $F$ if and only if, for all factorizations $(u,v)$ of $w$ into nonempty words, we have $w<_F vu$ \cite{Melancon1992}. In other words, the words in a Lazard set $F$ are primitive and minimal among their conjugates with respect to $<_F$. Words in $F$ are also characterized by the fact of being smaller than all their  nonempty proper suffixes with respect to $<_F$. We refer to \cite{Melancon1992} for the formal definition of the order $<_F$.
As it happens, in the case of Lyndon words, the order $<_{\Lyn}$ coincide with the lexicographic order $<_{\lex}$ on $A^*$ \cite{Melancon1992}. However, even though the restriction of $<_{\Nyl}$ to $\Nyl$ coincide with $>_{\lex}$ (as is the case for every Lazard set), the order $<_{\Nyl}$ and $>_{\lex}$ differs on $A^*\setminus \Nyl$ since Nyldon words are not lexicographically maximal among their conjugates. 
\end{remark}

\subsection{Lexicographically extremal choices in the Lazard procedure}

We saw in Example~\ref{ex:Lyndon-left-Lazard} that the choice of the lexicographically minimal elements of the sets $Y_i$ at each step of the left Lazard procedure leads to the set of Lyndon words. Symmetrically, the choice of the lexicographically maximal elements of the sets $Y_i$ at each step of the right Lazard procedure also yields the set of Lyndon words; see Example~\ref{ex:Lyndon-right-Lazard}. In the case of the Nyldon words, we have seen in Example~\ref{ex:Nyldon-right-Lazard} that by choosing the  lexicographically minimal elements of the sets $Y_i$ at each step of the right Lazard procedure produces the Nyldon words. Therefore, it is natural to ask what happens when we choose the lexicographically maximal elements of the sets $Y_i$ at each step of the left Lazard procedure. As it happens, this procedure also leads to the set of Lyndon words. More precisely, we obtain the set $\overline{\Lyn}=\{\overline{w}\in \{{\tt 0,1,\ldots,m}\}^*\colon w\in\Lyn \}$ where $\overline{\tt i}={\tt m{-}i}$ and $\overline{uv}=\overline{u}\,\overline{v}$ for all finite words $u,v$ over $A=\{{\tt 0,1,\ldots,m}\}$. Moreover, we obtain the  words of this set in the increasing lexicographic order induced by the total order ${\tt 0>1>\cdots>m}$ on the letters (also called the {\em inverse lexicographic order} \cite{Felice2018}). In fact, $\overline{\Lyn}$ is also a right Lazard set, as shown by Proposition~\ref{prop:permutation-letters-Lazard} below.

\begin{definition}
\label{def:pi}
Suppose that $A=\{{\tt 0,1,\ldots,m}\}$ and that $\pi$ is a permutation acting on $A$. If $\prec$ is a total order on $A^*$ which is induced by the total order ${\tt 0<1<\cdots<m}$ on the letters, then we denote by $\prec^{\pi}$ the corresponding total order which is induced by the total order ${\tt \pi(0)<\pi(1)<\cdots<\pi(m)}$ on the letters. Moreover, we extend the definition of $\pi$ on $A^+$ by setting $\pi(uv)=\pi(u)\pi(v)$ for all $u,v\in A^+$. Then, as usual, if $F$ is a subset of $A^+$ then $\pi(F)$ designates the set $\{\pi(w)\colon w\in F\}$.
\end{definition}

\begin{example}
If $\pi$ is the identity, then the order $\prec^\pi$ coincide with the original order $\prec$.
\end{example}

\begin{example}
\label{ex:Lyn-bar}
With the notation of Definition~\ref{def:pi}, the set $\overline{\Lyn}$ described above corresponds to $\pi(\Lyn)$ where $\pi$ is the permutation on $\{{\tt 0,1,\ldots,m}\}$ defined by $\pi({\tt i})={\tt m{-}i}$ for all ${\tt i}$. Thus, Proposition~\ref{prop:permutation-letters-Lazard} below shows in particular that $\overline{\Lyn}$ is a Viennot set ordered by the increasing lexicographic order $<_{\lex}^\pi$ induced by ${\tt 0>1>\cdots>m}$.
\end{example}

\begin{proposition}
\label{prop:permutation-letters-Lazard}
Suppose that $A=\{{\tt 0,1,\ldots,m}\}$ and that $\pi$ is a permutation acting on $A$. If a subset $F$ of $A^+$ is a left (resp., right) Lazard set  with respect to a total order order $\prec$ which is induced by the total order ${\tt 0<1<\cdots<m}$ on the letters, then the set $\pi(F)$ equipped with the total order $\prec^{\pi}$ is also a left (resp., right) Lazard set.
\end{proposition}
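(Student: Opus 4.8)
The plan is to unwind the definitions and check that applying the permutation $\pi$ to every letter commutes with the entire left (resp.\ right) Lazard construction. The key observation is that $\pi$, extended multiplicatively to $A^+$, is a length-preserving bijection of $A^*$ onto itself, and by the very definition of $\prec^\pi$ in Definition~\ref{def:pi} we have, for all $u,v\in A^*$,
\[
	u\prec v \iff \pi(u)\prec^\pi \pi(v).
\]
In other words, $\pi\colon (A^*,\prec)\to(A^*,\prec^\pi)$ is an order isomorphism that also preserves length, concatenation, and the operations $X\mapsto Xw^*$ and $X\mapsto w^*X$ on subsets of $A^*$.

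First I would fix $n\ge 1$ and write $F\cap A^{\le n}=\{u_1,\ldots,u_k\}$ with $u_1\prec\cdots\prec u_k$ (the left case; the right case is symmetric, replacing $\prec$ by $\succ$ throughout). Since $\pi$ is a length-preserving order isomorphism onto $(A^*,\prec^\pi)$, we immediately get $\pi(F)\cap A^{\le n}=\{\pi(u_1),\ldots,\pi(u_k)\}$ with $\pi(u_1)\prec^\pi\cdots\prec^\pi\pi(u_k)$, so the enumeration of $\pi(F)\cap A^{\le n}$ in $\prec^\pi$-increasing order is exactly the $\pi$-image of the enumeration of $F\cap A^{\le n}$ in $\prec$-increasing order. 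Next I would let $(Y_i)_{i\ge1}$ be the Lazard sequence for $F$ (that is, $Y_1=A$ and $Y_{i+1}=u_i^*(Y_i\setminus\{u_i\})$) and let $(Y_i')_{i\ge1}$ be the Lazard sequence for $\pi(F)$ with respect to $\prec^\pi$ (that is, $Y_1'=A$ and $Y_{i+1}'=\pi(u_i)^*(Y_i'\setminus\{\pi(u_i)\})$). I would then prove by induction on $i$ that $Y_i'=\pi(Y_i)$. The base case is $Y_1'=A=\pi(A)=\pi(Y_1)$, using that $\pi$ is a bijection of $A$. For the inductive step, assuming $Y_i'=\pi(Y_i)$, we compute
\[
	Y_{i+1}'=\pi(u_i)^*\bigl(\pi(Y_i)\setminus\{\pi(u_i)\}\bigr)
	        =\pi(u_i)^*\,\pi\bigl(Y_i\setminus\{u_i\}\bigr)
	        =\pi\bigl(u_i^*(Y_i\setminus\{u_i\})\bigr)=\pi(Y_{i+1}),
\]
where the second equality uses injectivity of $\pi$ and the third uses that $\pi$ is a monoid morphism, so $\pi$ maps the set $u_i^*X$ bijectively onto $\pi(u_i)^*\pi(X)$.

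With the identity $Y_i'=\pi(Y_i)$ in hand, conditions (i) and (ii) of Definition~\ref{def:Lazard} transfer instantly: for (i), $u_i\in Y_i$ gives $\pi(u_i)\in\pi(Y_i)=Y_i'$ for every $i\in\{1,\ldots,k\}$; for (ii), $Y_k\cap A^{\le n}=\{u_k\}$ gives $Y_k'\cap A^{\le n}=\pi(Y_k)\cap A^{\le n}=\pi(Y_k\cap A^{\le n})=\{\pi(u_k)\}$, again because $\pi$ is length-preserving and bijective. This holds for every $n\ge1$, so $\pi(F)$ equipped with $\prec^\pi$ is a left Lazard set. The right case is identical, with $u_1^*(Y_i\setminus\{u_i\})$ replaced by $(Y_i\setminus\{u_i\})u_i^*$, $\prec$ replaced by $\succ$, and the enumeration taken in decreasing order.

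I do not expect any genuine obstacle here: the whole proof is a routine ``transport of structure'' argument, and the only points requiring a moment's care are (a) checking that $\prec^\pi$ is defined precisely so that $\pi$ is an order isomorphism --- which is exactly the content of Definition~\ref{def:pi} --- and (b) being careful that the left Lazard step $X\mapsto u^*X$ (resp.\ the right step $X\mapsto Xu^*$) is compatible with the morphism $\pi$, i.e.\ that $\pi(u^*X)=\pi(u)^*\pi(X)$, which holds because $\pi(u^j x)=\pi(u)^j\pi(x)$. If anything, the mild subtlety is purely bookkeeping: one must make sure that the $\prec^\pi$-increasing (resp.\ $\succ^\pi$-decreasing) enumeration of $\pi(F)\cap A^{\le n}$ really is $(\pi(u_1),\ldots,\pi(u_k))$ and not some reordering, which again is immediate from $\pi$ being a length-preserving order isomorphism.
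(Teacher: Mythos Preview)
Your proof is correct. It is, however, a genuinely different route from the one the paper takes. You work directly from Definition~\ref{def:Lazard}: you show that $\pi$ commutes with each step of the Lazard construction, establish $Y_i'=\pi(Y_i)$ by induction, and then read off conditions~(i) and~(ii). The paper instead passes through the higher-level machinery: it first invokes Theorem~\ref{thm:Lazard-implies-complete} to know that $F$ is a complete factorization, observes that the order isomorphism $\pi$ transports this to a complete factorization $\pi(F)$ with respect to $\prec^\pi$, and then applies Viennot's characterization (Theorem~\ref{thm:Viennot}) on both sides --- checking that $f,g,fg\in\pi(F)$ forces $f\prec^\pi fg$ by pulling back through $\pi$. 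Your argument is more elementary and self-contained (it does not rely on Theorems~\ref{thm:Lazard-implies-complete} or~\ref{thm:Viennot}), while the paper's argument is shorter once those results are available and makes transparent that the only property of $\pi$ being used is that it is a monoid isomorphism and an order isomorphism.
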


\begin{proof}
We do the proof of the left side, the right side case being symmetric. By Theorem~\ref{thm:Lazard-implies-complete}, the set $F$ is a complete factorization of $A^*$ ordered by $\prec$. Then, since  for all $u,v\in A^+$, we have $u\prec v$ if and only if $\pi(u)\prec^{\pi}\pi(v)$, the set $\pi(F)$ is a complete factorization of $A^*$ ordered by $\prec^\pi$. Thus, we can use Theorem~\ref{thm:Viennot} to see that $\pi(F)$ is a left Lazard set ordered by $\prec^\pi$. Let $f,g\in\pi(F)$ such that $fg\in\pi(F)$. Then $f=\pi(u)$ and $g=\pi(v)$ with $u,v\in F$. Since $\pi(uv)=fg\in\pi(F)$, we obtain that $uv\in F$, and hence that $u\prec uv$. This means that $f\prec^\pi fg$. The result now follows from Theorem~\ref{thm:Viennot}.
\end{proof}

\begin{example}
\label{ex:Lyn-bar-resume}
We resume Example~\ref{ex:Lyn-bar} and we illustrate the fact that $\overline{\Lyn}$ is Viennot in the case of the binary alphabet $\{{\tt 0,1}\}$. We compute the words of $\overline{\Lyn}$ up to length $5$ thanks to the left and right Lazard constructions. Let $C_1=D_1=\{{\tt 0,1}\}$ and, for $i\ge1$, define $C_{i+1} =c_i^* (C_i\setminus\{c_i\})$ (resp., $D_{i+1} =(D_i\setminus\{d_i\})d_i^*$) with $c_i = \min_{<_{\lex}^\pi} (C_i  \cap \{{\tt 0,1}\}^{\le 5})$ (resp., $d_i = \max_{<_{\lex}^\pi} (D_i  \cap \{{\tt 0,1}\}^{\le 5})$). Table~\ref{table:max-left-Lazard} shows the successive steps of the left (resp., right) Lazard procedure for the construction of $\overline{\Lyn}$. The words $c_1, \ldots, c_{14}$ (resp., $d_1, \ldots, d_{14}$) are exactly the words in $\overline{\Lyn}$ of length at most 5. Observe that we have $c_1<_{\lex}^\pi\cdots<_{\lex}^\pi c_{14}$ and $d_1>_{\lex}^\pi\cdots>_{\lex}^\pi d_{14}$. As previously mentioned, we also have $c_i=\max_{<_{\lex}} (C_i  \cap \{{\tt 0,1}\}^{\le 5})$. This is somewhat surprising since in general it is not true that $\max_{<_{\lex}} S=\min_{<_{\lex}^\pi} S$  if $S$ is any subset of $A^*$. 
However, we see that $\min_{<_{\lex}} (D_2\cap \{{\tt 0,1}\}^{\le 5})={\tt 1}\neq\max_{<_{\lex}^\pi} (D_2\cap \{{\tt 0,1}\}^{\le 5})=d_2={\tt 10000}$. Indeed, if we choose the lexicographically minimal words in the right Lazard procedure, then we obtain the Nyldon words; see Table~\ref{table:Nyldon-right-Lazard}. The equality 
\[
	\textstyle 
	\max_{<_{\lex}} (C_i  \cap \{{\tt 0,1}\}^{\le 5})=\min_{<_{\lex}^\pi} (C_i  \cap \{{\tt 0,1}\}^{\le 5})
\]
is due to the fact that the sets $C_i$ are all prefix-free.
\begin{table}[h]
\centering
\[
	\begin{array}{c || l | l||l|l}
	i 	& C_i \cap \{{\tt 0,1}\}^{\le 5} 		& c_i 			& D_i \cap \{{\tt 0,1}\}^{\le 5} 			& d_i 			\\
	\hline 
	1 	& \{{\tt 0,1}\} 						& {\tt 1} 		& \{{\tt 0,1}\} 							& {\tt 0} 		\\
	2 	& \{{\tt 0, 10, 110, 1110, 11110}\} 	& {\tt 11110}	& \{{\tt 1, 10, 100, 1000, 10000}\}	 	& {\tt 10000} 	\\
	3 	& \{{\tt 0, 10, 110, 1110 }\}			& {\tt 1110}		& \{{\tt 1, 10, 100, 1000}\}				& {\tt 1000} 	\\
	4 	& \{{\tt 0, 11100, 10, 110 }\}  		& {\tt 11100} 	& \{{\tt 1, 11000, 10, 100}\}				& {\tt 100} 		\\
	5 	& \{{\tt 0, 10, 110 }\} 				& {\tt 110}		& \{{\tt 1, 1100, 11000, 10, 10100}\}		& {\tt 10100} 	\\
	6 	& \{{\tt 0, 1100,10,11010}\}			& {\tt 11010} 	& \{{\tt 1, 1100, 11000, 10}\}			& {\tt 10} 		\\
	7 	& \{{\tt 0, 1100,10}\}  				& {\tt 1100}		& \{{\tt 1, 110, 11010,1100, 11000}\}	& {\tt 11010} 	\\
	8 	& \{{\tt 0, 11000,10}\} 				& {\tt 11000}	& \{{\tt 1, 110, 1100, 11000}\}			& {\tt 11000} 	\\
	9 	& \{{\tt 0, 10}\} 						& {\tt 10} 		& \{{\tt 1, 110}\}						& {\tt 1100} 	\\
	10 	& \{{\tt 0,100,10100}\}			 	& {\tt 10100}	& \{{\tt 1, 11100, 110}\}					& {\tt 110} 		\\
	11 	& \{{\tt 0,100}\}  					& {\tt 100} 		& \{{\tt 1, 1110, 11100}\}				& {\tt 11100} 	\\
	12 	& \{{\tt 0,1000}\}			 		& {\tt 1000} 	& \{{\tt 1, 1110}\}						& {\tt 1110} 	\\
	13 	& \{{\tt 0,10000}\} 				 	& {\tt 10000}	& \{{\tt 1, 11110}\}						& {\tt 11110} 	\\
	14 	& \{{\tt 0}\} 							& {\tt 0}		& \{{\tt 1}\}								& {\tt 1} 		\\
	\end{array}
\]
\caption{The set $\overline{\Lyn}$ is Viennot.}
\label{table:max-left-Lazard}
\end{table}
\end{example}

Table~\ref{table:extremal-Lazard} summarizes the possible extremal choices with respect to the lexicographic order of the Lazard constructions; see Examples~\ref{ex:Lyndon-left-Lazard}, \ref{ex:Lyndon-right-Lazard}, \ref{ex:Nyldon-right-Lazard} and \ref{ex:Lyn-bar-resume}.
\begin{table}[h]
\centering
\renewcommand{\arraystretch}{1.8}
\begin{tabular}{|c|c||c|c|}
   \hline
  \multicolumn{2}{|c||}{\hspace{.5cm} Left Lazard \hspace{.4cm}\ } 	& \multicolumn{2}{c|}{\hspace{.5cm} Right Lazard \hspace{.4cm}\ } \\
   \hline
   lex min 	& $\mathcal{L}$			& lex min 	& $\mathcal{N}$ \\
   \hline
   lex max 	& $\bar{\mathcal{L}}$ 		& lex max 	& $\mathcal{L}$ \\
   
   \hline
\end{tabular}
\caption{The sets obtained thanks to the four possible lexicographical extremal choices of the Lazard constructions.}
\label{table:extremal-Lazard}
\end{table}

\begin{example}
Similarly, the set $\overline{\Nyl}$ is a right Lazard set ordered by the decreasing lexicographic order $>_{\lex}^\pi$ induced by the order ${\tt m<\cdots<1<0}$ on the letters, where $\pi$ is the same permutation as in Examples~\ref{ex:Lyn-bar} and~\ref{ex:Lyn-bar-resume}. 
\end{example}

\section{Mélançon’s algorithm applied to Nyldon words}
\label{sec:Melancon}

Left Lazard factorizations correspond to Hall sets, or rather {\em left Hall sets}  \cite{Viennot1978}. It is easily checked that right Lazard factorizations correspond to {\em right Hall sets},  as studied in \cite{Melancon1992}. We refer the reader to \cite{Viennot1978,Melancon1992} for the formal definition of  Hall sets, and the fact that the notion of left (resp., right) Hall sets and of left (resp., right) Lazard sets are actually  equivalent. As a consequence of Theorem~\ref{thm:Nyldon-right-Lazard}, the set of Nyldon words is a right Hall set. Therefore, the algorithm of M\'elan\c{c}on can be used to find the unique Nyldon conjugate of any primitive word. In this short section, we recall this algorithm.
Let us also mention here that a left version of  the algorithm of Mélançon was recently used in \cite{Perrin-Reutenauer2018} for computing the factorization corresponding to some new left Lazard set. 

In Algorithm~\ref{algo:conjugate}, $T(i)$ designates the $i$th element of the list $T$ while $T(-i)$ denotes the $(n-i+1)$th element of $T$ if $n$ is the length of $T$. 
\begin{algorithm}
\caption{Compute the Nyldon conjugate of a primitive word.}
\begin{algorithmic}
\REQUIRE $w\in A^+$ primitive
\ENSURE NylC is the Nyldon conjugate of $w$
\STATE $\NylC \leftarrow$ list of letters of $w$,  $T\leftarrow$ list of letters of $w$
\WHILE{length(NylC) $>1$}
\IF{$T(1)=\min_{<_{\lex}}\NylC$ and $T(1)<_{\lex}$ $T(-1)$} 
\STATE $T\leftarrow (T(2),\ldots, T(-2), T(-1)\cdot T(1))$
 \ENDIF
\STATE $i\leftarrow 2$, $j\leftarrow 2$
\WHILE{$j\le \length(T)$}
\WHILE{$i\le \length(T)$ and $T(i)\ne\min_{<_{\lex}}\NylC$}
\STATE $i\leftarrow i+1$
\ENDWHILE
\IF{$i\le \length(T)$ and $T(i)<_{\lex}$ $T(i-1)$}
\STATE $T\leftarrow (T(1),\ldots, T(i-1)\cdot T(i),\ldots,T(-1))$
\ENDIF
\STATE $j\leftarrow i+1$, $i\leftarrow i+1$
\ENDWHILE
\STATE NylC $\leftarrow T$
\ENDWHILE
\RETURN $\NylC$
\end{algorithmic}
\label{algo:conjugate}
\end{algorithm}

In the same manner as for Algorithm~\ref{algo:fac}, we can easily see that the worst case complexity of Mélançon’s algorithm is in $O(n(n-1)/2)$ if $n$ is the length of the input primitive word.

\section{Acknowledgment}

Manon Stipulanti is supported by the FRIA grant 1.E030.16. 
We thank S\'ebastien Labb\'e for bringing to our attention the Mathoverflow post of Grinberg.  
We thank Darij Grinberg for his interest in our results and for his comments on a first draft of this text.
We are also grateful to the reviewers for their numerous interesting comments.

\bibliographystyle{alpha}
\bibliography{Nyldon}
\label{sec:biblio}

\end{document}